\newtheorem{theorem}{Theorem}[section]
\newtheorem{lemma}{Lemma}[section]
\newtheorem{proposition}{Proposition}[section]
\newtheorem{problem}{Problem}
\theoremstyle{remark}
\newtheorem{remark}{Remark}[section]
\theoremstyle{definition}
\newtheorem{definition}{Definition}[section]
\numberwithin{equation}{section}
\newcommand{\norm}[1]{\| {#1}\| }
\newcommand{\set}[1]{\left\{#1\right\}}
\newcommand{\void}{\varnothing}
\newcommand{\abs}[1]{\left|#1\right|}
\newcommand{\dist}[2]{\mathrm{dist}({#1}, {#2})}
\newcommand{\brac}[1]{\left(#1\right)}
\DeclareMathOperator{\diam}{diam}
\DeclareMathOperator{\supp}{supp}
\newcommand{\Rn}{\mathbb{R}^n}
\newcommand{\R}{\mathbb{R}}
\renewcommand{\d}{\partial}
\newcommand{\da}{\d^\alpha}
\newcommand{\grad}{\nabla}
\newcommand{\T}{\mathcal{T}}
\newcommand{\jet}{\mathcal{J}}
\renewcommand{\P}{\mathcal{P}}
\renewcommand{\phi}{\varphi}
\renewcommand{\void}{\varnothing}
\newcommand{\pos}{[0,\infty)}
\newcommand{\Cs}{{\mathcal{C}^s}}
\newcommand{\Csh}{{\dot{\mathcal{C}}^s}}
\newcommand{\Fs}{{\mathcal{F}^s}}
\newcommand{\Fsh}{{\dot{\mathcal{F}}^s}}
\newcommand{\JC}{\jet\Cs}
\newcommand{\JCh}{\jet\Csh}
\newcommand{\JF}{\jet\Fs}
\newcommand{\JFh}{\jet\Fsh}
\newcommand{\floor}[1]{\lfloor#1\rfloor}
\title{Roots, trace, and extendability of flat nonnegative smooth functions}
\author[\dagger]{Fushuai Jiang}
\affil[\dagger]{Department of Mathematics\\
University of Maryland at College Park\\
4176 Campus Dr, College Park, MD 20742, USA\\
fsjiang@umd.edu
}
\date{}
\begin{document}

\maketitle

\begin{abstract}
    Building on the univariate techniques developed by Ray and Schmidt-Hieber, we study the class $\mathcal{F}^s(\mathbb{R}^n)$ of multivariate nonnegative smooth functions that are sufficiently flat near their zeroes, which guarantees that $F^r$ has H\"older differentiability $rs$ whenever $F \in \mathcal{F}^s$.
    We then construct a continuous Whitney extension map that recovers an $\mathcal{F}^s$ function from prescribed jets. Finally, we prove a Brudnyi-Shvartsman  Finiteness Principle for the class $\mathcal{F}^s$, thereby providing a necessary and sufficient condition for a nonnegative function defined on an arbitrary subset of $\mathbb{R}^n$ to be $\mathcal{F}^s$-extendable to all of $\mathbb{R}^n$. 
\end{abstract}

\section{Introduction}

For $ s \in \R $, we use $ \floor{s} $ to denote the largest integer strictly smaller than $ s $.
For a convex domain $\Omega \subset \Rn$ and a real number $s > 0$, let $\Cs(\Omega)$ denote the H\"older-Zygmund space of $ \floor{s} $-times continuously differentiable functions whose derivatives up to order $ \floor{s} $ are bounded, and the $ \floor{s} $-th order derivatives are H\"older continuous with H\"older exponent $ s - \floor{s} $. The space becomes a Banach space when equipped with the norm
\begin{equation*}
	\norm{F}_{\Cs(\Omega)}:= 
	\max_{0 \leq m <s}\sup_{x\in \Omega}\abs{\grad^m F(x)} +  
	\sup_{x \neq y,\, x, y \in \Omega} \frac{\abs{
			\grad^{\floor{s}}F(x) - \grad^{\floor{s}}F(y)
	}}{\abs{x-y}^{s-\floor{s}}}.
\end{equation*}

Building on the univariate techniques developed by K. Ray and J. Schmidt-Hieber \cite{RSH17}, we study the smoothness property of the roots of multivariate nonnegative H\"older-differentiable functions. Furthermore, we provide a description of their trace to subsets of $\Rn$ and provide criteria for their extendability from incomplete data.

The question of differentiability of square roots was first studied by G. Glaeser \cite{G63},
% and J. Dieudonn\'e \cite{Dieu70}
 who showed the sharp result that any nonnegative univariate function that is $2$-flat ($f(x) = 0 \Longrightarrow f''(x) = 0$) admits a continuously differentiable square root. Additional flatness conditions then become necessary for a smooth function to have a square root with additional regularity. All the roots considered in this paper are nonnegative, in contrast with the notion ``admissible roots'' that are allowed to change signs \cite{root-Bony06}.

In this paper, we consider a particular positive cone $\Fs(\Omega) \subset \Cs(\Omega)$ equipped with a seminorm
\begin{equation*}
	\norm{F}_{\Fsh(\Omega)}:= 
	\begin{cases}
		\max\limits_{1 \leq m < s}\sup\limits_{x \in \Omega}\brac{ \frac{\abs{\grad^{m}F(x)}^s}{F(x)^{s-m}} }^{1/m} &\text{ if } s \geq 1\\
		0 &\text{ if } 0 \leq s < 1
	\end{cases},
\end{equation*}
such that $F \in \Fs$ is a sufficient condition for $F^r$ to have H\"older regularity $rs$. We define
\begin{equation*}
    \norm{F}_{\Fs(\Omega)}:= \norm{F}_{\Cs(\Omega)}+\norm{F}_{\Fsh(\Omega)}.
\end{equation*}

\begin{theorem}\label{thm:root}
\newcommand{\Crs}{\mathcal{C}^{rs}}
\newcommand{\Frs}{\mathcal{F}^{rs}}
For $r \in (0,1]$, $s > 0$, a convex domain $\Omega\subset \Rn$, and all nonnegative $F \in \Fs(\Omega)$,
\begin{equation*}
    \norm{F^r}_{\Crs(\Omega)} \leq \norm{F^r}_{\Frs(\Omega)} \leq C(n,r,s)\norm{F}_{\Fs(\Omega)}^r.
\end{equation*}
\end{theorem}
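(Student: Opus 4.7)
The first inequality is immediate from the definition $\norm{G}_{\mathcal{F}^{rs}(\Omega)} = \norm{G}_{\mathcal{C}^{rs}(\Omega)} + \norm{G}_{\dot{\mathcal{F}}^{rs}(\Omega)}$ with $G = F^r$, since the seminorm piece is nonnegative. For the second inequality, write $K := \norm{F}_{\Fsh(\Omega)}$ and $M := \norm{F}_{\Cs(\Omega)}$, so $\norm{F}_{\Fs(\Omega)} = K + M$. The plan is to derive a single family of pointwise multiplicative bounds for $\abs{\grad^m F^r}$ from Fa\`a di Bruno's formula together with the flatness hypothesis, then peel off the sup-norm bounds, the $\dot{\mathcal{F}}^{rs}$-seminorm bound, and --- the hardest piece --- the H\"older seminorm of the top-order derivative.

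For the pointwise estimate, apply the multivariate Fa\`a di Bruno formula to $\phi \circ F$, where $\phi(t) = t^r$. For every multi-index $\alpha$ with $1 \leq \abs{\alpha} = m \leq \floor{rs}$, one obtains
\begin{equation*}
\abs{\da F^r(x)} \leq C(n,m) \sum_{k=1}^m \abs{\phi^{(k)}(F(x))} \sum_{\substack{m_1 + \cdots + m_k = m\\ m_j \geq 1}} \prod_{j=1}^k \abs{\grad^{m_j} F(x)}.
\end{equation*}
The hypothesis translates to $\abs{\grad^{m_j} F(x)} \leq K^{m_j/s} F(x)^{1 - m_j/s}$ for each $1 \leq m_j < s$, and this covers every index since $m_j \leq m < rs \leq s$. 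Combined with $\abs{\phi^{(k)}(t)} \leq C(k,r)\, t^{r-k}$, telescoping the exponents yields
\begin{equation*}
\abs{\grad^m F^r(x)} \leq C(n,r,s)\, K^{m/s}\, F(x)^{r - m/s}, \qquad 1 \leq m \leq \floor{rs},
\end{equation*}
with $r - m/s > 0$ because $m < rs$. This uniform bound immediately yields $\abs{\grad^m F^r} \leq C(K + M)^r \leq C \norm{F}_{\Fs(\Omega)}^r$ via the elementary inequality $K^{m/s} M^{r - m/s} \leq (K + M)^r$; and, raised to the power $rs/m$ and divided by $F^r(x)^{(rs-m)/m}$, it gives the $\dot{\mathcal{F}}^{rs}(\Omega)$-seminorm bound $\leq CK^r$, since the $F$-factors cancel exactly.

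The remaining and main step is the H\"older seminorm of $\grad^m F^r$ with exponent $\mu := rs - m$, where $m = \floor{rs}$. The key auxiliary fact, obtained from the $m = 1$ flatness bound via $\abs{\grad F^{1/s}} = s^{-1} F^{1/s - 1} \abs{\grad F} \leq s^{-1} K^{1/s}$, is that $F^{1/s}$ is Lipschitz on $\Omega$ with constant $\leq s^{-1} K^{1/s}$. Given $x, y \in \Omega$, set $\delta := \abs{x - y}$ and pick a threshold $c$ of order $K^{-1/s}$, splitting into two regimes. In the large-separation regime $\delta \geq c F(x)^{1/s}$, the pointwise estimate gives $\abs{\grad^m F^r(x)} \leq C K^{m/s} F(x)^{\mu/s} \leq C K^r \delta^\mu$ after using $F(x)^{1/s} \leq \delta/c$ and the identity $(m+\mu)/s = r$; the Lipschitz bound $F(y)^{1/s} \leq F(x)^{1/s} + s^{-1} K^{1/s} \delta \leq C K^{1/s} \delta$ then yields the same bound at $y$. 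In the small-separation regime $\delta < c F(x)^{1/s}$, $F$ is comparable to $F(x)$ along the entire segment $[x,y]$, so one further Fa\`a di Bruno iteration produces the uniform bound $\abs{\grad^{m+1} F^r(z)} \leq C K^{(m+1)/s} F(x)^{r - (m+1)/s}$ for $z$ on the segment (even when this exponent is negative); integrating and then absorbing $\delta^{1 - \mu}$ into $F(x)^{(1-\mu)/s}$ via $\delta \leq c F(x)^{1/s}$ makes all $F(x)$-powers cancel because $rs = m + \mu$, again yielding $\abs{\grad^m F^r(x) - \grad^m F^r(y)} \leq C K^r \delta^\mu$.

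The principal obstacle is this final H\"older estimate: the threshold $c$ of order $K^{-1/s}$ must be tuned precisely so that the $K$-powers collapse to $K^r$ rather than to some strictly larger combination, and the $F(x)$-powers cancel in both regimes. A secondary subtlety is the integer case $rs \in \mathbb{Z}$, where $\mu = 1$ in the paper's floor convention and the small-separation estimate degenerates to a uniform bound on $\grad^{m+1} F^r$: the Fa\`a di Bruno exponent $r - (m+1)/s$ equals $0$, so the right-hand side is independent of $F$, and the argument reduces to a plain mean-value estimate. Assembling the sup-norm, $\dot{\mathcal{F}}^{rs}$-seminorm, and H\"older pieces and applying $K^a M^{r - a} \leq (K + M)^r$ for $0 \leq a \leq r$ produces the desired bound $\norm{F^r}_{\mathcal{F}^{rs}(\Omega)} \leq C(n,r,s) \norm{F}_{\Fs(\Omega)}^r$.
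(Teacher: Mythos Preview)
Your approach follows the same overall strategy as the paper --- Fa\`a di Bruno for the pointwise bounds, then a large/small separation dichotomy for the H\"older estimate --- and the large-separation regime is handled correctly. However, there is a genuine gap in the small-separation regime.

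You estimate $\abs{\grad^{m+1}F^r(z)}$ via Fa\`a di Bruno and then integrate. But that formula requires the derivatives $\grad^{j}F$ for $j$ up to $m+1=\floor{rs}+1$, while $F\in\Cs(\Omega)$ only possesses $\floor{s}$ classical derivatives. Your argument therefore breaks down whenever $\floor{rs}=\floor{s}$, which happens precisely when $r$ is close enough to $1$ that $rs>\floor{s}$; for instance $s=2.5$, $r=0.9$ gives $rs=2.25$, $m=\floor{rs}=2=\floor{s}$, and $\grad^{3}F$ is simply not available. The ``integer case $rs\in\mathbb{Z}$'' you single out is not the relevant obstruction.

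The paper treats this case separately. When $m_0:=\floor{rs}=\floor{s}$ one cannot gain a derivative, so instead one uses the H\"older continuity of $\grad^{\floor{s}}F$ (this is where $\norm{F}_{\Csh}$ enters) to prove an estimate of the form
\[
\abs{\grad^{m_0}F^r(x)-\grad^{m_0}F^r(y)}\leq C\,\frac{\norm{F}_{\Csh}+\norm{F}_{\Fsh}}{\min\{F(x),F(y)\}^{1-r}}\,\abs{x-y}^{s-m_0},
\]
and then interpolates between this and the pointwise bound $\abs{\grad^{m_0}F^r}\leq C\,F^{\,r-m_0/s}$ with exponent $\theta=\mu/(s-m_0)$, so that the powers of $F$ cancel and the factor $\abs{x-y}^{s-m_0}$ becomes $\abs{x-y}^{\mu}$. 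This also explains why your asserted constant $CK^r$ involving only the flatness seminorm $K=\norm{F}_{\Fsh}$ cannot be right in this regime: the H\"older seminorm $\norm{F}_{\Csh}$ must appear.
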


Note that the univariate version ($n=1$) of Theorem \ref{thm:root} was proved by Ray and Schmidt-Hieber\cite{RSH17}. The main difference between the proof of Theorem \ref{thm:root} and its univariate counterpart is in the analysis of the combinatorial form of the multivariate Fa\'a di Bruno's formula (Theorem \ref{thm:FdB}). We supply the proof of Theorem \ref{thm:root} in Appendix \ref{section:thm:root} for completeness.

In view of Theorem \ref{thm:root}, $F \in \Fs(\Omega)$ is a sufficient condition for $F^{1/2} \in \mathcal{C}^{s/2}(\Omega)$, thereby overcoming the limitation described in \cite{root-Bony06,G63}. The quantity $\norm{F}_{\Fsh(\Omega)}$ measures the flatness of $F$ near its zeros in $\Omega$. In particular, if $F \in \Fs(\Omega)$ and $F(x_0) = 0$ for some $x_0 \in \Omega$, then $\grad^m F(x_0) = 0$ for all $0 \leq m < s$. The class $\Fs(\Omega)$ contains, for example, constant functions, nonnegative $\Cs$ functions uniformly bounded away from zero on $\Omega$, and functions of the form $\abs{x-x_0}^sG(x)$ with $G$ nonnegative and uniformly bounded away from zero. See Section \ref{sect:Fs basic} for other basic properties of $\Fs$, and
\cite{RSH17} for further comparison between $\Fs$ and other criteria considered in the literature. 

The convexity assumption on $\Omega$ in Theorem \ref{thm:root} can be relaxed, as long as a suitable variant of the mean value theorem holds. We will not pursue such generality in this paper.

Next, we consider two Whitney-type extension problems on how to characterize the restriction of $\Fs$ functions to a closed set (with no assumption on differentiability structures), and how to recover a function $F \in \Fs(\Rn)$ from partial data with $ \norm{F}_{\Fs(\Rn)} $ as small as possible, up to a constant factor depending only on the dimension and the smoothness. We call such an $ F $ quasi-optimal.

First, we consider the classical jet-extension problem, i.e., reconstruction of an $\Fs$ function from given Taylor expansions. 

\begin{problem}\label{prob:jet}
    Let $E\subset \Rn$ be a closed set. For each $x \in E$, let $P_x$ be a polynomial of degree at most $\floor{s}:= \max\set{m \in \mathbb{N}_0 : m < s}$ (a jet of order $ \floor{s} $) with $P_x(x) \geq 0$. 
    \begin{enumerate}[(A)]
        \item Does there exist an $F \in \Fs(\Rn)$ such that $\jet_x F \equiv P_x$ for all $x \in E$, i.e., the $\floor{s}$-jet of $F$ at $x$ agrees with $P_x$ for all $x \in E$?
        \item If so, how do we find such an $F$ taking all the prescribed jets, such that $\norm{F}_{\Fs(\Rn)}$ is as small as possible, up to a multiplicative factor $C(n,s)$?
    \end{enumerate}
\end{problem}

Let $\jet_E F$ denote the parameterized family of jets $(\jet_x F)_{x\in E}$. Taylor's theorem, taking flatness into consideration, states that $\norm{\jet_E F}_{\JF(E)}\leq C(n,s)\norm{F}_{\Fs(\Rn)}$, where $\norm{\,\cdot\,}_{\JF(E)}$ is a norm on the positive cone $\JF(E)$ of flat jets associated with the class $\Fs$ (see \eqref{eq:Whitney norm} below). Our solution to Problem \ref{prob:jet} is captured by the following theorem.

\begin{theorem}\label{thm:whitney-inverse}
   	Let $ E\subset\Rn $ be a closed set and let $ (P_x)_{x\in E} $ be a field of jets of order $ \floor{s} $ with $ P_x(x) \geq 0 $ for all $ x \in E $. Then $ (P_x)_{x\in E} $ extends to an $ \Fs(\Rn) $ function
      if and only if $\norm{(P_x)_{x\in E}}_{\JF(E)} < \infty$. Moreover, 
    we can construct a continuous map $\T_E : \JF(E) \to \Fs(\Rn)$ such that $\jet_E\circ \T_E = Id$ on $\JF(E)$.
\end{theorem}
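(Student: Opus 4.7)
The ``only if'' direction is the familiar Taylor-theorem constraint: if $F\in\Fs(\Rn)$ realizes the jet field $(P_x)_{x\in E}$, then the standard remainder estimates furnish the Whitney compatibility bounds $|\partial^\alpha(P_x-P_y)(y)|\leq C|x-y|^{s-|\alpha|}$ for $|\alpha|\leq \floor{s}$, while the pointwise flatness of $F$ at points of $E$ (which are the basepoints of the jets $P_x$) gives the corresponding flatness bounds on the coefficients of each $P_x$. Together these yield $\norm{(P_x)_{x\in E}}_{\JF(E)} \leq C(n,s)\norm{F}_{\Fs(\Rn)}$.

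For the ``if'' direction and the construction of $\T_E$, my plan is to perform a classical Whitney jet-extension and then add a nonnegative correction that is flat on $E$ and absorbs any negative excursions. Take a Whitney cube decomposition $\set{Q_\nu}$ of $\Rn\setminus E$ with associated smooth partition of unity $\set{\phi_\nu}$ satisfying $|\grad^m\phi_\nu|\leq C_m\diam(Q_\nu)^{-m}$, and for each $\nu$ choose a basepoint $x_\nu\in E$ with $\dist(x_\nu,Q_\nu)\approx\diam(Q_\nu)$. Define
\begin{equation*}
\tilde F(y) := \begin{cases} \sum_\nu \phi_\nu(y)\,P_{x_\nu}(y), & y\in\Rn\setminus E,\\ P_y(y), & y\in E.\end{cases}
\end{equation*}
Classical Whitney theory makes $(P_x)\mapsto\tilde F$ a linear continuous map into $\Cs(\Rn)$ with $\jet_x\tilde F = P_x$ on $E$ and $\norm{\tilde F}_{\Cs(\Rn)}\leq C(n,s)\norm{(P_x)}_{\JC(E)}$.

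For the correction, let $\theta:\Rn\to\pos$ be a regularized distance to $E$: smooth on $\Rn\setminus E$, with $c_1\dist(\cdot,E)\leq\theta\leq c_2\dist(\cdot,E)$ and $|\grad^m\theta|\leq c_m\theta^{1-m}$. A direct computation shows $\theta^s\in\Fs(\Rn)$ with universal $\Fsh$ seminorm and with $\floor{s}$-jet vanishing identically on $E$ (the ratio $|\grad^m\theta^s|^s/(\theta^s)^{s-m}$ simplifies to a universal constant pointwise on $\Rn\setminus E$). I then set
\begin{equation*}
\T_E\brac{(P_x)_{x\in E}} := \tilde F + C_0\,\norm{(P_x)_{x\in E}}_{\JF(E)}\,\theta^s
\end{equation*}
for a sufficiently large absolute constant $C_0=C_0(n,s)$, so that the jet-matching identity $\jet_E\circ\T_E = Id$ is preserved and $\T_E$ depends continuously on $(P_x)$.

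Verifying $\T_E((P_x))\in\Fs$ with norm controlled by $\norm{(P_x)}_{\JF(E)}$ is the heart of the argument. Pointwise nonnegativity comes from Taylor-expanding each $P_{x_\nu}$ about $x_\nu$: the nonnegativity $P_{x_\nu}(x_\nu)\geq 0$ together with the flatness inequalities on the derivatives of $P_{x_\nu}$ at $x_\nu$ produces the uniform lower bound $P_{x_\nu}(y)\geq -C\norm{(P_x)}_{\JF(E)}|y-x_\nu|^s$; summing against the partition of unity gives $\tilde F(y)\geq -C\norm{(P_x)}_{\JF(E)}\dist(y,E)^s$, which is dominated by $C_0\norm{(P_x)}_{\JF(E)}\theta(y)^s$ once $C_0$ is chosen large relative to the universal constants. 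For the $\Fsh$ seminorm: at points of $E$ it is inherited from the jet flatness, and at $y\notin E$ the combination of the lower bound $\T_E((P_x))(y)\gtrsim \norm{(P_x)}_{\JF(E)}\theta(y)^s$ with the Whitney derivative estimates $|\grad^m\T_E((P_x))(y)|\lesssim \norm{(P_x)}_{\JF(E)}\theta(y)^{s-m}$ yields a uniform bound on the ratio $|\grad^m\T_E((P_x))|^s/\T_E((P_x))^{s-m}$. The main obstacle is to balance these two roles of $\norm{(P_x)}_{\JF(E)}$ cleanly: the Whitney estimates for $\tilde F$ have to incorporate the flatness of the jets (not merely their $\Cs$ Whitney compatibility) so that the same constants govern both the lower bound needed for positivity and the upper bound needed for the seminorm, and the resulting inequalities close to give the $\Fs$ norm control.
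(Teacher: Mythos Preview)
Your route is genuinely different from the paper's. The paper never adds a global correction term; instead, for each basepoint $x_0$ it first builds a \emph{local} nonnegative $\Fs$ extension $\T_{x_0}[P_{x_0}]$ by multiplying $P_{x_0}$ against a bump supported at the flatness scale $\delta\sim (P_{x_0}(x_0)/M)^{1/s}$, on which $P_{x_0}$ is already comparable to $P_{x_0}(x_0)$ and hence nonnegative. These already-nonnegative, already-$\Fs$ local pieces are then glued by a Whitney partition of unity, so nonnegativity of $\T_E[\vec P]$ is automatic (convex combination of nonnegatives) and the $\Fsh$ estimate reduces to controlling the product-rule cross terms. Your ``linear Whitney $+$ nonnegative bump'' scheme is conceptually cleaner for continuity (the only nonlinearity is the scalar $\norm{\vec P}_{\JF(E)}$), while the paper's scheme is cleaner for the $\Fsh$ estimate because every building block already lies in $\Fs$.

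Two points in your argument need repair. First, $\theta^s\notin\Fs(\Rn)$ as written: if $E$ is bounded then $\theta$ is unbounded, so $\theta^s\notin\Cs(\Rn)$. You must cap $\theta$ at unit scale (a smoothed $\min(\theta,1)$), which preserves $|\grad^m\theta|\lesssim\theta^{1-m}$ and is harmless but must be said. Second, the asserted derivative bound $|\grad^m\T_E(\vec P)(y)|\lesssim M\theta(y)^{s-m}$ is false: with $E=\{0\}$, $s=2$, $P_0(y)=1+y$ one has $|\grad\tilde F|\approx 1\not\lesssim M|y|$ near $0$. What actually closes the ratio is that your flatness/Young lower bound gives more than $-CM\theta^s$: it gives $\tilde F(y)\geq \tfrac12 P_{x_\nu}(x_\nu)-CM|y-x_\nu|^s$, and hence (after Whitney compatibility between neighboring $x_\nu$ and the correction) $F(y)\gtrsim P_{x_0}(x_0)+M\theta(y)^s$ for the nearest $x_0\in E$. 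Pairing this improved lower bound with $|\grad^m F(y)|\lesssim |\grad^m P_{x_0}(y)|+M\theta(y)^{s-m}$ and the flatness of $P_{x_0}$ at $x_0$, a case split on $P_{x_0}(x_0)\gtrless M\theta^s$ yields $\bigl(|\grad^m F|^s/F^{s-m}\bigr)^{1/m}\lesssim M$. Your closing paragraph gestures at exactly this refinement; make the two-sided comparison with $P_{x_0}(x_0)$ explicit rather than stating the incorrect one-sided bound.
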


See Theorem \ref{thm:whitney-finite} (when $E$ is finite) and Theorem \ref{thm:whitney-closed} (when $E$ is possibly infinite) for a more detailed statement. We shall prove Theorem \ref{thm:whitney-finite} in Section \ref{sect:whitney}, and we will explain the necessary modification for proving Theorem \ref{thm:whitney-inverse} when $E$ is possibly infinite.

Next, we consider the much harder function-extension problem.

\begin{problem}[Whitney Extension Problem for $\Fs$]\label{prob:function}
Let $E\subset \Rn$ be a closed set and let $f : E \to \pos$ be a continuous function. 
\begin{enumerate}[(A)]
    \item Does there exist an $F \in \Fs(\Rn)$ such that $F = f$ on $E$?
    \item If so,
    how do we find such an $F \in \Cs(\Rn)$ such that $F = f$ on $E$ and $\norm{F}_{\Fs(\Rn)}$ is as small as possible, up to a multiplicative factor $C(n,s)$?
\end{enumerate}

\end{problem}

In this paper, we provide an answer to Problem \ref{prob:function}(A) in terms of the following ``Finiteness Principle''.

\begin{theorem}[Finiteness Principle for $\Fs(\Rn)$]\label{thm:fp-arbitrary}
    Given $ n \geq 1 $ and $ s > 0 $, there exist constants $k^\sharp = k^\sharp(n,s)$ and $C^\sharp = C^\sharp(n,s)$ such that the following holds. Given an arbitrary $E\subset \Rn$ (not necessarily finite or closed) and a function $f : E \to \pos$, $f$ extends to a $\Fs(\Rn)$ function if and only if 
    \begin{equation*}
        \sup_{S\subset E,\, \#S \leq k^\sharp }\norm{f}_{\Fs(S)}
         < \infty,
    \end{equation*}
    where $\norm{f}_{\Fs(S)}:= \inf\set{
        \norm{F}_{\Fs(\Rn)} : F \in \Fs(\Rn) 
        \text{ and } F = f \text{ on }S
        }$.
    Moreover, we have the following equivalence of norms
    \begin{equation}
        \sup_{S\subset E,\, \#S \leq k^\sharp }\norm{f}_{\Fs(S)}\leq \norm{f}_{\Fs(E)} \leq C^\sharp \cdot\sup_{S\subset E,\, \#S \leq k^\sharp }\norm{f}_{\Fs(S)}.
        \label{eq:fp-closed-quant}
    \end{equation}
\end{theorem}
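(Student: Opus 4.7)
The plan is to reduce Theorem \ref{thm:fp-arbitrary} to Theorem \ref{thm:whitney-inverse} by producing, from the finiteness hypothesis, a compatible field of flat jets on $E$; the reduction is first carried out for finite $E$ and then lifted to arbitrary $E$ by a compactness argument. The first inequality in \eqref{eq:fp-closed-quant} is immediate: any $F \in \Fs(\Rn)$ extending $f$ on $E$ restricts to an extension of $f$ on every $S \subset E$, so $\norm{f}_{\Fs(S)} \leq \norm{F}_{\Fs(\Rn)}$, and taking the infimum over $F$ followed by the supremum over $S \subset E$ with $\#S \leq k^\sharp$ yields the lower bound. The real content is the reverse inequality.

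For the hard direction, set $M := \sup_{S \subset E,\,\#S \leq k^\sharp}\norm{f}_{\Fs(S)}$, assume $M<\infty$, and first take $E$ finite. For each admissible $S$, select a near-optimal extension $F^S \in \Fs(\Rn)$ with $F^S = f$ on $S$ and $\norm{F^S}_{\Fs(\Rn)} \leq 2M$. For each $x \in S$, the jet $\jet_x F^S$ lies in a bounded subset of the finite-dimensional affine space of polynomials of degree at most $\floor{s}$ with value $f(x)$ at $x$, subject moreover to the flatness cone condition (forcing $\grad^m P_x(x) = 0$ when $f(x) = 0$). Following the Brudnyi-Shvartsman paradigm, I would choose $k^\sharp = k^\sharp(n,s)$ large enough that for every pair $x,y \in E$ one can assemble a single witness set $S_{x,y} \supset \set{x,y}$ of size at most $k^\sharp$, and then use a Helly-type or iterative Lagrange-selection argument to produce, for each $x \in E$, a single jet $P_x$ that agrees with each $\jet_x F^{S_{x,y}}$ up to an error of order $|x-y|^s M$ in the seminorm of $\JF(E)$. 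The Whitney compatibility $\norm{(P_x)_{x\in E}}_{\JF(E)} \leq C(n,s) M$ then follows from the triangle inequality, and the positivity $P_x(x) = f(x) \geq 0$ is automatic by construction. Feeding $(P_x)_{x \in E}$ into the extension operator $\T_E$ from Theorem \ref{thm:whitney-inverse} produces $F \in \Fs(\Rn)$ with $F = f$ on $E$ and $\norm{F}_{\Fs(\Rn)} \leq C'(n,s) M$.

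To lift this to arbitrary $E$, exhaust $E$ by an increasing sequence of finite sets $E_1 \subset E_2 \subset \cdots$ with dense union in $E$, apply the finite case to obtain extensions $F_N \in \Fs(\Rn)$ with $\norm{F_N}_{\Fs(\Rn)} \leq C'M$, and extract an Arzel\`a-Ascoli subsequential limit $F$ in $\Cs_{\mathrm{loc}}(\Rn)$. The $\Cs$-seminorm passes to the limit in the usual way, and the ratio $\Fsh$-seminorm is lower semicontinuous under this convergence: away from the zero set of $F$ the defining ratios are continuous in the data, while at zeros of $F$ the uniform flatness of the $F_N$ near their own zeros transfers to the limit. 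Continuity of $f$ and density of $\bigcup_N E_N$ in $E$ then upgrade $F = f$ from the dense subset to all of $E$.

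The main obstacle is the jet-selection step in the finite case. Unlike the linear Whitney setting underlying the classical $\Cs$ finiteness principle, the admissible jets at each $x$ form a \emph{nonlinear cone}, cut out by $P_x(x) \geq 0$ together with full flatness when $f(x) = 0$, and the norm on $\JF(E)$ mixes linear Taylor comparisons with the nonlinear $\Fsh$-ratio seminorm. Any Helly- or convex-programming-based selection must preserve membership in this cone in addition to respecting the linear Whitney inequalities, which forces both a careful choice of $k^\sharp$ and delicate bookkeeping of how the $\Fsh$-seminorm interacts with the flat-jet Taylor estimates; that is where the bulk of the technical work lies.
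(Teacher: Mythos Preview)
Your overall architecture---prove the finite case, then lift by compactness---matches the paper. But for the finite case you and the paper diverge in a significant way, and the gap you yourself flag is precisely the one the paper closes by a different mechanism.

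You propose to select, for each $x\in E$, a single jet $P_x$ by a Helly-type or Brudnyi--Shvartsman argument, and you correctly identify the obstacle: the admissible jets at $x$ form a nonlinear cone (the $\Fsh$-ratio constraints), so the linear selection machinery does not apply directly. You leave this step open. The paper does \emph{not} attempt a direct jet selection. Instead it packages the constraint set as a \emph{shape field} $\Gamma_f(x,M) = \{P\in\Gamma(x,M): P(x)=f(x)\}$ and proves (Lemma~\ref{lem:Cd convex}) that this family is $(C,1)$-Whitney convex in the sense of Fefferman--Israel--Luli. That lemma is exactly where the nonlinear cone is tamed: given two admissible jets $P_1,P_2\in\Gamma_f(x_0,M)$ that are $M\delta^{s-m}$-close and a partition $Q_1^2+Q_2^2\equiv 1$ at scale $\delta$, one must show $\jet_{x_0}(Q_1^2P_1+Q_2^2P_2)\in\Gamma_f(x_0,CM)$, which requires re-verifying the flatness ratio bounds. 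With Whitney convexity in hand, the paper invokes the black-box Finiteness Principle for Shape Fields (Theorem~\ref{thm:sf}) to produce a function $F_0\in\Cs(Q_0)$ whose jets lie in $\Gamma_f(x,CM)$ for every $x\in E$; only then are those jets fed to $\T_E$. So the ``bulk of the technical work'' you anticipate is replaced by the Whitney-convexity computation plus an appeal to heavy off-the-shelf machinery.

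Two smaller remarks on the compactness step. First, you invoke continuity of $f$ to pass from a dense countable subset to all of $E$, but the statement does not assume $f$ continuous; you would need to observe that the finiteness hypothesis with $\#S=2$ already forces uniform continuity of $f$ on $E$. The paper sidesteps this entirely by using the finite intersection property on the compact set $\mathcal{B}=\{F:\norm{F}_{\Fs(Q_0)}\le CM_0\}$: for every finite $E_0\subset E$ the closed slice $\{F\in\mathcal{B}:F|_{E_0}=f|_{E_0}\}$ is nonempty, hence the full intersection over all finite $E_0$ is nonempty. Second, your lower-semicontinuity argument for $\norm{\cdot}_{\Fsh}$ under $C^{\floor{s}}_{\mathrm{loc}}$ limits is correct in spirit but needs a word near zeros of $F$; the paper again avoids this by working inside $\mathcal{B}$ from the start.
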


To wit, the only obstruction to the existence of a global extension is the lack of control in some local extensions.

Problem \ref{prob:function} for $\Cs(\Rn)$, its variants, and the related Finiteness Principles have been extensively studied by Y. Brudnyi and P. Shvartsman \cite{BS94-W,BS01}, C. Fefferman \cite{F05-J,F05-Sh,F09-Data-3}, and
C. Fefferman, A. Israel, and G.K. Luli \cite{FIL16,FIL16+}. In Proposition \ref{prop:c2} below, we see that for $s \in (0,2]$, $\Fs(\Rn)$ consists of exactly the nonnegative functions in $\Cs(\Rn)$. Thus, Problem \ref{prob:function} for $s = 2$ has been solved in the author's joint papers \cite{JL20-Ext,JL20,JL20-Alg,FJL23}. In particular, there exist efficient algorithms to compute a quasi-optimal extension when $E$ is finite. See also \cite{JLLL23} for the one-dimensional implementation of the extension algorithm in the software package R. Note that if $0 < s \leq 1$, $\Fs$ agrees with the cone of nonnegative $\Cs$ functions. By the classical Whitney extension theorem (see Section \ref{sect:whitney}, or the Kirszbraun theorem), 
$f$ extends to $\Fs$ if and only if $f$ is nonnegative and 
Lipschitz on $(E,d_s)$, where $d_s(x,y) = \abs{x-y}^s$.

We will prove Theorem \ref{thm:fp-arbitrary} in Section \ref{sect:fp}.
The number $k^\sharp$ in Theorem \ref{thm:fp-arbitrary} resulting from our proof is unnecessarily large because it relies on the refinement procedure of an abstract object called ``shape fields'' (Definition \ref{def:sf} below) introduced in \cite{FIL16,FIL16+}. However, it can be substantially improved to be $k^\sharp = 2^{\dim\P}$, where $\P$ is the vector space of polynomials in $n$ variables of degree at most $\floor{s}$. See \cite{JLO20,BM07}.

In an upcoming paper, we will study Problem \ref{prob:function}(B). 

This is a part of the literature on extension, interpolation, and selection of functions, going back to H. Whitney's seminal works \cite{W34-1,W34-2,W34-3}. We refer the interested readers to \cite{G58,BS94-Tr, BS98,BS01,Shv08,BM07,BMP06,F05-Sh,F05-L,F05-J,F06,F07-L,F09-Data-3,F09-Int,Z98,Z99,JLO22, JLO20,FIL13,FIL16,FIL16+,FJL23,FI20-book,W34-1,W34-2,W34-3} and references therein
for the history and related problems.
% This is a part of the literature on extension, interpolation, and selection of functions, going back to H. Whitney's seminal works \cite{W34-1,W34-2,W34-3}
% and including fundamental contributions by G. Glaeser \cite{G58}, Y. Brudnyi and P. Shvartsman \cite{BS85,BS94-W,BS94-Tr,BS97,BS98,BS01,Shv82,Shv84,Shv86,Shv87,Shv90,Shv01,Shv02,Shv04,Shv08}, E.~Bierstone, P.~Milman, and W.~Paw{\l}ucki \cite{BM07, BMP03,BMP06}, C. Fefferman \cite{F05-Sh,F05-L,F05-J,F06,F07-L,F09-Data-3}. See e.g. \cite{F09-Int} for the history of the problem, as well as N. Zobin \cite{Z98,Z99} for a related problem. 
We also refer the interested readers to \cite{root-Bony06,root-Bony10, RSH17} and references therein for more backgrounds on the regularity of roots. Besides their connection with the mathematical aspect of optimization and machine learning, another application of these results can be found in non-parametric statistics, where one seeks to recover a H\"older function $F$ when observing a noisy version of $F^{1/2}$ \cite{noise,RSH16, RSH17}.  \\

 \textbf{Acknowledgment.} I am grateful to my former Ph.D. advisor Kevin Luli for introducing me to Whitney's extension problems and for his valuable suggestions on this manuscript. I would like to thank Kolyan Ray and Johannes Schmidt-Hieber for their insightful comments on the multivariate counterpart of their original results. I would also like to thank the anonymous referees for their detailed suggestions of the manuscript. 
 
 Part of this material is based upon work supported by the National Science Foundation under Grant No. DMS-1439786 while I was in residence at the Institute for Computational and Experimental Research in Mathematics in Providence, RI, during the fall 2022 semester.

\section{Definitions and notations}
\label{sect:definitions}

Throughout the paper, $\Omega$ always denotes a connected open subset of $ \Rn $. Let $\floor{s}$ denote the largest integer {\em strictly} less than $s\in \R$. For a positive real $s > 0$, we use $\Cs(\Omega)$ to denote the vector space of $\floor{s}$-times continuously differentiable functions whose derivatives up to order $\floor{s}$ are bounded and $(s-\floor{s})$-H\"older continuous. We equip $\Cs(\Omega)$ with the norm
\begin{equation*}
    \norm{F}_{\Cs(\Omega)}:= 
    \max_{0 \leq m <s}\sup_{x\in \Omega}\abs{\grad^m F(x)} +  
    \sup_{x \neq y,\, x, y \in \Omega} \frac{\abs{
    \grad^{\floor{s}}F(x) - \grad^{\floor{s}}F(y)
    }}{\abs{x-y}^{s-\floor{s}}}.
\end{equation*}
Here, $\grad^m F(x)$ denotes the symmetric $m$-linear form and $\abs{\grad^m F(x)} = \brac{\sum_{\abs{\alpha}=k}\abs{\da F(x)}^{2}}^{1/2}$, where $\alpha = (\alpha_1, \cdots, \alpha_n) \in \mathbb{N}_0^n$ is a multi-index, $\abs{\alpha} := \sum_{j = 1}^n\alpha_j$, and $\da = \d_{t_1}^{\alpha_1}\cdots\d_{t_n}^{\alpha_n}$. We use the $\grad^mF$ notation when we want to emphasize on the number of derivatives taken. We will also use the homogeneous H\"older seminorm
\begin{equation*}
    \norm{F}_{\Csh(\Omega)}:= \sup_{x \neq y,\, x, y \in \Omega} \frac{\abs{
    \grad^{\floor{s}}F(x) - \grad^{\floor{s}}F(y)
    }}{\abs{x-y}^{s-\floor{s}}}.
\end{equation*}

We would like to point out the difference between $ \Cs(\Omega) $ and the non-H\"older space $ C^m(\Omega) $ consisting of $ m $-times continuously differentiable functions whose derivatives up order $ m $ are bounded and continuous, equipped with the norm $ \norm{F}_{C^m(\Omega)} := \max\limits_{0 \leq k \leq m}\sup\limits_{x \in \Omega}\abs{\grad^k F(x)} $. The closed unit ball of $ \Cs(\Omega) $ is compact in the $ C^{\floor{s}}(\Omega) $ topology whenever $ \Omega $ is bounded. We will use this fact in the proof of Theorem \ref{thm:fp-arbitrary}.
 
For a nonnegative function $F \in \Cs(\Omega)$, we define
\begin{equation*}
    \norm{F}_{\Fsh(\Omega)}:= 
    \begin{cases}
        \max\limits_{1 \leq m < s}\sup\limits_{x \in \Omega}\brac{ \frac{\abs{\grad^{m}F(x)}^s}{F(x)^{s-m}} }^{1/m} &\text{ if } s \geq 1\\
        0 &\text{ if } 0 \leq s < 1
    \end{cases},
\end{equation*}
where we adopt the conventions $\frac{0}{0}= 0$ and $\frac{a}{0} = \infty$ for $a > 0$. We use $\Fs(\Omega)$ to denote the collection of nonnegative functions $ F \in \Cs(\Omega)$ such that $\norm{F}_{\Fsh} < \infty$, and we define
\begin{equation*}
    \norm{F}_{\Fs(\Omega)}:= \norm{F}_{\Cs(\Omega)} + \norm{F}_{\Fsh(\Omega)}.
\end{equation*}
We will see in Proposition \ref{prop:Fs basic}(A) that $\norm{\cdot}_{\Fsh(\Omega)}$ defines a seminorm on the positive cone $\Fs(\Omega)\subset \Cs(\Omega)$, thus justifying the notations above. 
% When the choice of $\Omega$ is clear from context, we simply write $\Cs, \Csh, \Fs, \Fsh$. 
Since every $F \in \Cs(\Omega)$ has well-defined values on the closure $\overline{\Omega}$, we can make sense of the spaces above if we replace $\Omega$ by $K$ with $\Omega \subset K\subset \overline{\Omega}$.

We use $\P = \P^{\floor{s}}$ to denote the space of polynomials on $\Rn$ of degree no greater than $\floor{s}$. 

Let $F$ be $\floor{s}$-times continuously differentiable near $x_0 \in \Rn$, we use $\jet_{x_0}F$ to denote the $\floor{s}$-jet of $F$ at $x_0$, which we identify with its Taylor polynomial
\begin{equation*}
    \jet_{x_0}F(x) \equiv \sum_{0 \leq m < s} \grad^mF(x_0)\brac{\frac{(x-x_0)^{\otimes m}} {m!}} \equiv \sum_{0 \leq \abs{\alpha}< s} \da F(x_0)\frac{(x-x_0)^\alpha}{\alpha!}.
\end{equation*}

A cube $Q$ in $\Rn$ is a set of the form $c_Q + [-\delta_Q/2,\delta_Q/2)^n$, where $c_Q \in \Rn$ is the center of $Q$ and $\delta_Q > 0$ is the sidelength of $Q$. If $A > 0$, we use $AQ$ to denote the concentric dilation of $Q$ by a factor of $A$.
A \underline{dyadic} cube is a cube of the form $2^{-k}z + [-2^{-k},2^{-k})$, where $z \in \mathbb{Z}^n$ and $k \in \mathbb{Z}$. Each dyadic cube $Q$ is contained in a unique dyadic cube with sidelength $2\delta_Q$, and that cube is denoted by $Q^+$.

We use $B(x,r)$ to denote the open ball in $\Rn$ with center $x$ and radius $r$, and we use $Q(c_Q,\delta_Q)$ to denote the cube in $\Rn$ with center $c_Q$ and sidelength $\delta_Q$. 

Let $X$ be a set of parameters, we use $C(X), c(X)$, etc., to denote constants that depend only on $X$. Their precise values may vary from line to line. 

Let $S$ be a finite set, we write $\#S$ to denote the cardinality of $S$. If $ S $ is not finite, we define $ \#S = \infty $. 

Let $ A,B\subset \R^d $. We define $ \diam A := \sup_{x, y \in A}\abs{x-y} $ and $ \dist{A}{B} := \inf_{a \in A, b \in B} \abs{a-b} $.

\section{Basic results on the flat norm}
\label{sect:Fs basic}

We begin by summarizing some basic properties of the class $\Fs$ in the following proposition.

\begin{proposition}\label{prop:Fs basic}
    \newcommand{\Fsp}{\mathcal{F}^{s'}}
    \newcommand{\Fsph}{\dot{\mathcal{F}}^{s'}}
    Let $s >0 $ and let $\Omega\subset \Rn$ be convex. 
    \begin{enumerate}[(A)]
        \item $\Fs(\Omega)$ is a positive cone in $\Cs(\Omega)$, on which $\norm{\cdot}_{\Fs(\Omega)}$ defines a norm. Moreover, $\norm{FG}_{\Fs(\Omega)} \leq C(n,s)\norm{F}_{\Fs(\Omega)}\norm{G}_{\Fs(\Omega)}$ for any $F,G \in \Fs$.
        \item If $0 < s' \leq s$, then $\Fs(\Omega)\subset \Fsp(\Omega)$, and $\norm{\cdot}_{\Fsph(\Omega)} \leq \max\set{\norm{\cdot}_{\Fsh(\Omega)}, \norm{\cdot}_{L^\infty(\Omega)}}$.
        \item If $0 < s' \leq s$, $s > 1$, $F \in \Fs(\Omega)$, and $\inf_{\Omega} F = 0$, then $\norm{F}_{\Fsh(\Omega)} \geq \norm{F}_{L^\infty(\Omega)}$ and $\norm{F}_{\Fsph(\Omega)} \leq \norm{F}_{\Fsh(\Omega)}$.
        \item If $F$ is uniformly bounded away from zero on $\Omega$, then $F \in \Fs(\Omega)$ if and only if $F \in \Cs(\Omega)$.
        \item Assume that $ \Omega $ is bounded.
        Let $F $ be nonnegative and continuously differentiable on $\Omega$, such that $\d_jF \in \Fs(\Omega)$ for every $j = 1, \cdots, n$. Then $F \in \mathcal{F}^{s+1}(\Omega)$ with 
    \begin{equation*}
        \norm{F}_{\mathcal{F}^{s+1}(\Omega)} \leq C(n,s,\diam\Omega)\cdot
        \max
        \set{
        \sum_{j = 1}^n\brac{\norm{\d_j F}_{\mathcal{F}^s(\Omega)}^2}^{1/2},\,
        \norm{F}_{L^\infty(\Omega)}
        }.
    \end{equation*}
    % \item Let $\psi$ be a bounded, compactly supported $S$-regular wavelet with $S \in \mathbb{N}$. Fix $Q_0\subset \Rn$ with $\delta_{Q_0} = 1$, and let $\set{\psi_{j,k}: j,k \in \mathbb{Z}}$ be an orthonormal basis for $L^2(Q_0)$ induced by $\psi$ with boundary correction in the sense of \cite{CDV93}. Suppose $r \in (0,1]$ and $F \in \Fs(Q_0)$ for $ 0 < s < S$. Then
    % \begin{equation*}
    %     \abs{\langle F^r, \psi_{j,k}\rangle} \leq 2^{-j(2rs+1)/2}\cdot C(n,r,s,\psi)\norm{F}_{\Fs(B)}^r.
    % \end{equation*}
    % For $x_0 \in Q_0$, let $j(x_0)$ be the smallest integer satisfying 
    % $2^{j(x_0)} \geq \abs{\supp{\psi}}c_0^{-1}\brac{\frac{\norm{F}_{\Fs(Q_0)}}{F(x_0)} }^{1/s}$
    % with $c_0 = c_0(1/2)$ as in Lemma \ref{lem:locally constant}. Then for any wavelet $\psi_{j,k}$ with $j \geq j(x_0)$ and $x_0 \in \supp{\psi_{j,k}}$,
    % \begin{equation*}
    %     \abs{\langle F^r,\psi_{j,k} \rangle} \leq 2^{-j(2s+1)/2}\cdot C(n,r,s,\psi)\frac{\norm{F}_{\Fs(Q_0)}}{F(x_0)^{1-r}}
    % \end{equation*}
    \end{enumerate}
\end{proposition}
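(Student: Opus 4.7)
The plan is to verify (A)-(E) in turn; all five arguments rest on the pointwise reformulation
\begin{equation*}
\abs{\grad^m F(x)}\leq \norm{F}_{\Fsh(\Omega)}^{m/s}F(x)^{(s-m)/s}, \qquad 1\leq m < s,
\end{equation*}
of the definition of $\norm{\cdot}_{\Fsh(\Omega)}$, combined with H\"older's inequality to handle sums raised to fractional powers and the elementary bound $a^\theta b^{1-\theta}\leq \max\set{a,b}$ for collapsing geometric means.

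For (A), the $1$-homogeneity of $\norm{\cdot}_{\Fsh(\Omega)}$ is immediate, and for the triangle inequality on the seminorm I would apply the pointwise bound to each summand and then use H\"older with conjugate pair $(s/m,s/(s-m))$ to obtain
\begin{equation*}
\abs{\grad^m(F+G)(x)}\leq \brac{\norm{F}_{\Fsh(\Omega)}+\norm{G}_{\Fsh(\Omega)}}^{m/s}(F+G)(x)^{(s-m)/s};
\end{equation*}
submultiplicativity of $\norm{\cdot}_{\Fs(\Omega)}$ proceeds along the same lines from the Leibniz expansion $\grad^m(FG)=\sum_k\binom{m}{k}\grad^k F\otimes \grad^{m-k}G$, with the binomial factors folded into $C(n,s)$. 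Part (B) follows by rewriting the flatness bound at exponent $s'\leq s$, bounding the leftover power of $F$ by $\norm{F}_{L^\infty(\Omega)}$, taking $m$-th root, and collapsing the resulting geometric mean; part (D) is immediate because $F\geq c>0$ turns every ratio $\abs{\grad^m F}^s/F^{s-m}$ into a bounded multiple of $\norm{F}_{\Cs(\Omega)}^s$. For (C), the $m=1$ case of the pointwise bound rewrites as $\abs{\grad F^{1/s}}\leq s^{-1}\norm{F}_{\Fsh(\Omega)}^{1/s}$, so $F^{1/s}$ is Lipschitz on $\Omega$; coupled with $\inf_\Omega F=0$, this yields the $L^\infty$ bound, and the second claim follows from (B).

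Part (E) is the main obstacle, as it bootstraps flatness by a full order. I would begin from the identity $\abs{\grad^{m'}F(x)}^2 = \sum_{j=1}^n\abs{\grad^{m'-1}\d_jF(x)}^2$ for $1\leq m'\leq \floor{s}+1$, which transfers every derivative onto $\d_jF\in \Fs(\Omega)$; the pointwise flatness bound at order $m'-1$ then produces a sum of terms of the form $\norm{\d_jF}_{\Fsh(\Omega)}^{(m'-1)/s}(\d_jF(x))^{(s-m'+1)/s}$. The crux is to exchange powers of $\d_jF$ for powers of $F$: in the regime $\inf_\Omega F=0$, nonnegativity of $F$ and of each $\d_jF$ forces every zero of $F$ to be a joint zero of the $\d_jF$'s, and part (C) applied to $\d_jF$ makes $(\d_jF)^{1/s}$ Lipschitz on $\Omega$. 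Integrating $F$ along a straight segment from $x$ into $\set{F=0}$ via the fundamental theorem of calculus, with $\grad F$ estimated through this Lipschitz bound, yields a Glaeser-type inequality of the form $F(x)\geq c(n,s,\diam\Omega)\,\d_jF(x)^{(s+1)/s}$. Substituting back and collecting exponents converts the pointwise estimate into $\abs{\grad^{m'}F(x)}^{s+1}\leq C\,F(x)^{s+1-m'}$, exactly the $\mathcal{F}^{s+1}$ flatness at order $m'$; the complementary regime $\inf_\Omega F>0$ reduces to (D), and combining the two produces the $\max$ appearing in the asserted bound.
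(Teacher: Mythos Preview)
The paper does not give a self-contained proof of this proposition; it simply defers to the univariate arguments in Ray--Schmidt-Hieber and asserts that one applies them coordinatewise. Your overall strategy (pointwise flatness bound, H\"older's inequality for the cone/algebra structure, and for (E) a Glaeser-type estimate) is precisely in that spirit, and parts (A), (B), (D) go through as you describe.

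There is, however, a genuine gap in your argument for (E), and in fact the statement of (E) as written appears to require additional hypotheses. Take $n=1$, $\Omega=(0,1)$, $F(x)=x$: then $F\ge0$ is $C^\infty$, $F'\equiv1\in\Fs(\Omega)$ with $\|F'\|_{\Fs}=1$, yet $\|F\|_{\dot{\mathcal{F}}^{s+1}(\Omega)}\ge\sup_{x\in(0,1)}|F'(x)|^{s+1}/F(x)^{s}=\sup_{x}x^{-s}=\infty$. Your claimed Glaeser-type inequality $F(x)\ge c\,(\d_jF(x))^{(s+1)/s}$ becomes $x\ge c\cdot1$, which fails for small $x$. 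The integration argument you sketch actually yields a lower bound for $F(x)$ of order $(\d_jF(x))^{(s+1)/s}/L_j$, where $L_j$ is the Lipschitz constant of $(\d_jF)^{1/s}$; this is useless when $L_j$ is small or zero, exactly the situation in the example. A similar scaling issue undermines your argument for (C): from ``$F^{1/s}$ Lipschitz and $\inf_\Omega F=0$'' one cannot deduce $\|F\|_{L^\infty}\le\|F\|_{\Fsh}$ on an unbounded $\Omega$, since for $F_\mu(x):=F(\mu x)$ on $\Rn$ one has $\|F_\mu\|_{\Fsh}=\mu^{s}\|F\|_{\Fsh}\to0$ while $\|F_\mu\|_{L^\infty}$ and $\inf F_\mu=0$ are unchanged. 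The correct versions of (C) and (E) presumably carry extra hypotheses (e.g.\ $\Omega$ bounded with constants depending on $\diam\Omega$, or $F$ extending nonnegatively past $\d\Omega$) that are implicit in the Ray--Schmidt-Hieber setting but not recorded here; your argument would then need to invoke those hypotheses explicitly at the Glaeser step.
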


To prove Proposition \ref{prop:Fs basic}, we directly apply the univariate argument in the proofs of Theorems 2,3, and 5 of \cite{RSH17} to all the partial derivatives. 
% (we also use Lemma \ref{lem:key-flat} in the proof of part (F)) 
One can also obtain similar wavelet estimates as in Proposition 1 of \cite{RSH17}.
We omit the details here.

\begin{remark}\label{rem:completeness}
    The cone $\Fs$ inherits a notion of completeness from $\Cs$. Suppose $(F_n)_{n =1}^\infty$ is a $\Cs$-Cauchy sequence $\Cs\cap \Fs$ that is also bounded in $\Fs$, then the limit $F$ of $F_n$ in $\Cs$ is in $\Fs$ and $\norm{F_n}_{\Fs} \to \norm{F}_{\Fs}$.
\end{remark}

The following lemma states that the flatness condition (at a single point) induces a lengthscale within which the change in derivatives can be controlled by the pointwise value. In particular, the function is locally constant.  We will use this lemma repeatedly throughout the rest of the paper. 

\begin{lemma}\label{lem:locally constant}
     Given $\epsilon>0$, there exists $c_0>0$ bounded from below by a constant determined only by $\epsilon,n,s$
     such that for any $0 < c \leq c_0$, the following hold.
    \begin{enumerate}[(A)]
        \item Let $x_0 \in \Rn$, let $P \in \P$ with $P(x_0) \geq 0$ and $M := \max\limits_{1 \leq m < s}\brac{\frac{\abs{\grad^m P(x_0)}^s}{P(x_0)^{s-m}}}^{1/m} < \infty$, and let $\delta_{x_0,P}:= c\brac{\frac{P(x_0)}{M}}^{1/s}$. Then
        \begin{equation*}
            \abs{\grad^mP(x) - \grad^mP(x_0)} \leq \epsilon M^{\frac{m}{s}}P(x_0)^{\frac{s-m}{s}}
        \end{equation*}
        for all $x\in B(x_0,\delta_{x_0,P})$ and $0 \leq m < s$.
        In particular, 
        \begin{equation*}
            \abs{P(x) - P(x_0)} \leq \epsilon P(x_0)
            \text{ for all } x\in B(x_0,\delta_{x_0,P}).
        \end{equation*}
        
        \item Let $\Omega \subset \Rn$ be a convex open set, let $x_0 \in \Omega$, $F \in \Fs(\Omega)$, and let $\delta_{x_0,F} := c\brac{\frac{F(x_0)}{\norm{F}_{\Csh(\Omega)} + \norm{F}_{\Fsh(\Omega)}}}^{1/s}$. Then
        \begin{equation*}
            \abs{\grad^mF(x)-\grad^mF(x_0)} \leq 
            \epsilon 
            \brac{\norm{F}_{\Csh(\Omega)} + \norm{F}_{\Fsh(\Omega)}}^{\frac{m}{s}}
            F(x_0)^{\frac{s-m}{s}}
        \end{equation*}
        for all $x \in B(x_0,\delta_{x_0,F})\cap \Omega$ and $0 \leq m < s$.
        In particular,
        \begin{equation*}
            \abs{F(x)-F(x_0)} \leq \epsilon F(x_0)
            \text{ for all } x \in B(x_0,\delta_{x_0,F})\cap \Omega.
        \end{equation*}
        
    \end{enumerate}
\end{lemma}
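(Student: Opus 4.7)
The plan is to prove both parts by a direct Taylor expansion, using the flatness hypothesis to trade powers of the relevant norm ($M$ in (A); $N := \norm{F}_{\Csh(\Omega)} + \norm{F}_{\Fsh(\Omega)}$ in (B)) against powers of $P(x_0)$ (resp.\ $F(x_0)$), so that the chosen radius $\delta$ is exactly what is needed to make each Taylor coefficient contribute a small factor $c^k$.

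For part (A), since $P$ has degree at most $\floor{s}$, the Taylor expansion of $\grad^m P$ around $x_0$ terminates at order $\floor{s}-m$, giving
\[
\grad^m P(x) - \grad^m P(x_0) = \sum_{k=1}^{\floor{s}-m}\frac{1}{k!}\grad^{m+k}P(x_0)(x-x_0)^{\otimes k}.
\]
Finiteness of $M$ yields $\abs{\grad^j P(x_0)} \leq M^{j/s}P(x_0)^{(s-j)/s}$ for $1 \leq j < s$, so substituting $\abs{x-x_0} \leq \delta_{x_0,P} = c(P(x_0)/M)^{1/s}$ makes the powers of $M$ and $P(x_0)$ combine cleanly, and the $k$-th summand is bounded by $(c^k/k!)\,M^{m/s}P(x_0)^{(s-m)/s}$. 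Summing yields the prefactor $e^c - 1$, and I choose $c_0$ so that $e^{c_0} - 1 \leq \epsilon$. The edge case $P(x_0)=0$ is painless: under the conventions $a/0=\infty$, $0/0=0$, finiteness of $M$ forces $\grad^j P(x_0)=0$ for $1 \leq j < s$, hence $P \equiv 0$, and $\delta_{x_0,P}=0$ anyway.

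For part (B) I use Taylor's theorem with remainder, applicable by convexity of $\Omega$: for $0 \leq m \leq \floor{s}$,
\[
\grad^m F(x) - \grad^m F(x_0) = \sum_{k=1}^{\floor{s}-m}\frac{1}{k!}\grad^{m+k}F(x_0)(x-x_0)^{\otimes k} + R_m(x,x_0),
\]
with $\abs{R_m(x,x_0)} \leq C(n,s)\norm{F}_{\Csh(\Omega)}\abs{x-x_0}^{s-m}$ obtained from the H\"older continuity of $\grad^{\floor{s}}F$ via the integral form of the remainder. The polynomial part is estimated exactly as in (A) with $M$ replaced by $N$ (using $\norm{F}_{\Fsh(\Omega)} \leq N$), while the remainder is bounded by $C(n,s)\,c^{s-m}\,N^{m/s}F(x_0)^{(s-m)/s}$ since $\norm{F}_{\Csh(\Omega)} \leq N$ and $\abs{x-x_0}^{s-m} \leq c^{s-m}(F(x_0)/N)^{(s-m)/s}$. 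Choosing $c_0$ small enough in $\epsilon,n,s$ that the combined prefactor is at most $\epsilon$ concludes the proof; the degenerate case $F(x_0)=0$ is handled identically to part (A). The only real work is keeping track of the exponents of $M$ (resp.\ $N$) and $P(x_0)$ (resp.\ $F(x_0)$) so that $c^k$ and $c^{s-m}$ emerge as the small parameter after the substitutions, a routine but careful bookkeeping exercise with no genuine obstacle beyond it.
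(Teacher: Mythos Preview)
Your proposal is correct and follows essentially the same approach as the paper: both parts are handled by Taylor expansion, bounding each term via the flatness inequality $\abs{\grad^{m+k}P(x_0)} \leq M^{(m+k)/s}P(x_0)^{(s-m-k)/s}$ and the choice of radius $\delta$, with part (B) adding the H\"older remainder estimate. The only cosmetic difference is that the paper explicitly introduces $P:\equiv \jet_{x_0}F$ and invokes part (A) with $\epsilon/2$ for the polynomial piece, whereas you fold this into a single Taylor expansion with remainder; the bookkeeping is identical.
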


\begin{proof}
    If $s\leq 1$, there is nothing to prove. We assume that $s > 1$. We further assume that $P(x_0), F(x_0) > 0$, for otherwise, all derivatives of $P$ and $F$ must vanish, and the estimates are trivial.

    First, we prove (A). Write $\delta = \delta_{x_0,P}$. Let $x \in B(x_0,c\delta)$. We treat $P$ as its own Taylor polynomial, so that
    \begin{equation*}
        \grad^m P(x) = \grad^m P(x_0) + \sum_{1 \leq k < s-m}\grad^{m+k} P(x_0) \brac{\frac{(x-x_0)^{\otimes k}}{k!}}.
    \end{equation*}
    Note that $\abs{(x-x_0)^{\otimes k}} \leq c^k\delta^{k}$.
    By assumption, 
    $\abs{\grad^{m+k} P(x_0)} \leq P(x_0)^{\frac{s-m-k}{s}} M^{\frac{m+k}{s}} $. Therefore, 
    \begin{equation*}
        \begin{split}
            \abs{\grad^mP(x)-\grad^mP(x_0)} \leq C(n,s)\sum_{1 \leq k < s-m} P(x_0)^{\frac{s-m-k}{s}} M^{\frac{m+k}{s}} \cdot c^k \cdot \frac{P(x_0)^{\frac{k}{s}}}{M^{\frac{k}{s}}}.
        \end{split}
    \end{equation*}
    For $c_0$ sufficiently small, we see that the right-hand side can be bounded by $\epsilon M^{\frac{m}{s}}P(x_0)^{\frac{s-m}{s}}$. This proves (A).

    We turn to (B). We set write $\delta = \delta_{x_0,F}$. Let $P:\equiv \jet_{x_0}F$, i.e.,
    \begin{equation*}
        P(x) := \sum_{0\leq m < s}\grad^m F(x_0) \brac{\frac{(x-x_0)^{\otimes m}}{m!}}.
    \end{equation*}
    Then $\norm{F}_{\Fsh(\Omega)} \geq M := \max\limits_{1 \leq m < s}\brac{\frac{\abs{\grad^m P(x_0)}^s}{P(x_0)^{s-m}}}^{1/m}$. By the triangle inequality, Taylor's theorem, and part (A) (with $\epsilon/2$ in place of $\epsilon$), 
    \begin{equation*}
        \begin{split}
            \abs{\grad^mF(x) - \grad^mF(x_0)} &\leq \abs{\grad^mP(x)-\grad^mP(x_0)} + \abs{\grad^m(F-P)(x)} \\
            &\leq 
            \frac{\epsilon}{2}\norm{F}_{\Fsh(\Omega)}^{\frac{m}{s}}F(x_0)^{\frac{s-m}{s}}
            + \abs{\grad^m(F-P)(x)}.
        \end{split}
    \end{equation*}
    By Taylor's theorem,
    \begin{equation*}
        \begin{split}
            \abs{\grad^m(F-P)(x)} 
            &\leq C(n,s)\norm{F}_{\Csh(\Omega)}\delta^{s-m} \\
            &\leq C(n,s)c^{s-m}\cdot \brac{\norm{F}_{\Csh(\Omega)}
            +
            \norm{F}_{\Fsh(\Omega)}
            }
            \cdot
            \brac{\frac{F(x_0)}{\norm{F}_{\Csh(\Omega)}
            +
            \norm{F}_{\Fsh(\Omega)}
            }}^{\frac{s-m}{s}}
            \\
            &\leq \frac{\epsilon}{2}\brac{\norm{F}_{\Csh(\Omega)}
            +
            \norm{F}_{\Fsh(\Omega)}
            }^{\frac{m}{s}}
            F(x_0)^{\frac{s-m}{s}}
        \end{split}
    \end{equation*}
    as long as $c_0$ is sufficiently small. Lemma \ref{lem:locally constant}(B) follows from the two inequalities above.
\end{proof}

If $s \in (0,1]$, then the flatness condition is trivial, and any function in $ \Fs(\Omega)$ can be extended to a nonnegative $\Cs(\Rn)$ function with comparable norm, via Kirszbraun's formula or the Whitney extension operator (see Section \ref{sect:whitney} below). The next proposition, which improves Theorem 4 of \cite{RSH17}, says a similar phenomenon also occurs for $s \in (1,2]$. The scaling is consistent with the nonnegative (non-H\"older) $C^2$ extension in \cite{JL20,JL20-Alg,JL20-Ext,FJL23,JLLL23}. Results of such type can be also used to study nonnegative Sobolev $L^2_p(\Rn)$ ($p > n$) extension, thanks to the embedding $L^2_p(\Omega) \hookrightarrow \mathcal{C}^{2-n/p}(\Omega) $ for bounded $\Omega\subset\Rn$. See \cite{I13}.

\begin{proposition}\label{prop:c2}
    Let $s \in (1,2]$, let $x_0 \in \Rn$, and let $P$ be an affine polynomial with $P(x_0) \geq 0$ and $\frac{\abs{\grad P}^s}{P(x_0)^{1-s}} < \infty$. Set $\delta= \frac{P(x_0)}{\abs{\grad P}}$. Then for any $\lambda > 1$ and nonnegative $F \in \Cs(B(x_0,\lambda\delta))$ with $\jet_{x_0}F \equiv P$, we have
    \begin{equation}
    \norm{F}_{\Csh(B(x_0,\lambda\delta))} \geq
    \frac{s(\lambda-1)}{\lambda^{s}}\frac{\abs{\grad P}^s}{P(x_0)^{s-1}}.
    \label{eq:prop:c2-0}
    \end{equation}
    In particular, if $F \in \Cs(\Rn)$ is nonnegative and $F_\Omega$ is the restriction of $F$ to $\Omega \subset \Rn$, then $F_\Omega \in \Fs(\Omega)$ and $\norm{F_\Omega}_{\Fsh(\Omega)} \leq \frac{2^s}{s}\norm{F}_{\Csh(\Rn)}$.
\end{proposition}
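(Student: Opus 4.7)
\medskip

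\noindent\textbf{Proof proposal for Proposition \ref{prop:c2}.} The plan is to exploit the one-sided constraint $F \geq 0$ by probing $F$ in the direction in which the first-order Taylor model $P$ decreases fastest, and then to control the gap between $F$ and $P$ via the Taylor--Hölder remainder (recall $\lfloor s\rfloor = 1$). This turns \eqref{eq:prop:c2-0} into a one-dimensional positivity argument.

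First I dispose of the degenerate case $P(x_0) = 0$. The hypothesis $\abs{\grad P}^s / P(x_0)^{s-1} < \infty$ together with the convention $a/0 = \infty$ for $a > 0$ forces $\grad P = 0$, and then \eqref{eq:prop:c2-0} reads $0 \geq 0$.

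Assume $P(x_0) > 0$. Set $v := -\grad P / \abs{\grad P}$ and probe $F$ at the point $x_\lambda := x_0 + \lambda \delta v$, which lies in $B(x_0,\lambda\delta)$. Direct evaluation gives
\begin{equation*}
	P(x_\lambda) \;=\; P(x_0) + \grad P \cdot (\lambda\delta v) \;=\; P(x_0) - \lambda\delta\abs{\grad P} \;=\; (1-\lambda)P(x_0) \;<\; 0.
\end{equation*}
Since $\jet_{x_0} F \equiv P$, the integral form of Taylor's remainder applied to the $(s-1)$-Hölder gradient of $F$ yields
\begin{equation*}
	\abs{F(x_\lambda) - P(x_\lambda)} \;\leq\; \int_0^1 \abs{\grad F(x_0 + t(x_\lambda - x_0)) - \grad F(x_0)}\,\abs{x_\lambda - x_0}\, dt \;\leq\; \frac{\norm{F}_{\Csh(B(x_0,\lambda\delta))}}{s}(\lambda\delta)^s.
\end{equation*}
Nonnegativity of $F$ at $x_\lambda$ then forces $F(x_\lambda) - P(x_\lambda) \geq -P(x_\lambda) = (\lambda-1)P(x_0)$, and rearranging, using $\delta = P(x_0)/\abs{\grad P}$, recovers \eqref{eq:prop:c2-0}.

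For the ``in particular'' part, I apply the main bound pointwise at each $x_0 \in \Omega$ with $P := \jet_{x_0} F$ and $\lambda := 2$ (any $\lambda > 1$ would work; this choice yields the clean constant $2^s / s$, though it is not the optimal value, which comes from $\lambda = s/(s-1)$). If $F(x_0) = 0$ then $x_0$ is a minimum of the nonnegative function $F$, so $\grad F(x_0) = 0$ and the flatness quotient vanishes by convention. Otherwise $P(x_0) > 0$, and the main bound rearranges to $\abs{\grad F(x_0)}^s / F(x_0)^{s-1} \leq (2^s/s)\norm{F}_{\Csh(\Rn)}$; taking the supremum over $x_0 \in \Omega$ produces the stated bound on $\norm{F_\Omega}_{\Fsh(\Omega)}$. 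There is no substantial obstacle in this argument; the only care needed is in treating the degenerate cases where a denominator vanishes, which is handled by the $0/0 = 0$ and $a/0 = \infty$ conventions from Section \ref{sect:definitions}.
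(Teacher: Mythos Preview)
Your proof is correct and follows essentially the same approach as the paper's: both integrate $\nabla F - \nabla P$ along the ray from $x_0$ in the direction $-\nabla P/|\nabla P|$ to get the Taylor--H\"older remainder bound $|F-P|\le s^{-1}\norm{F}_{\Csh}|x-x_0|^s$, then use $F\ge 0$ at the endpoint $t=\lambda\delta$ to force the inequality. The only cosmetic difference is that the paper records the intermediate one-variable inequality $s^{-1}Mt^s - |\nabla P|\,t + P(x_0)\ge 0$ for all $t\in[0,\lambda\delta]$ before specializing to $t=\lambda\delta$, whereas you go straight to the endpoint; also note that $x_\lambda$ sits on the boundary of the open ball, so strictly speaking the evaluation uses the continuous extension of $F$ to the closure (which the paper allows in Section~\ref{sect:definitions}).
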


\begin{proof}
    If $\delta = 0$, then $P(x_0) = \abs{\grad P} = 0$, and \eqref{eq:prop:c2-0} is trivial. We assume that $\delta > 0$.
    Let $M := \norm{F}_{\Cs(B(x_0,\lambda\delta))}$. For any $x \in B(x_0,\lambda\delta)$, let $\gamma$ be the straight line segment from $x_0$ to $x$, and the fundamental theorem of calculus gives
    \begin{equation*}
        F(x) = P(x) + \int_\gamma (\grad F - \grad P).
    \end{equation*}
    The triangle inequality then implies that
    \begin{equation}
        \abs{F(x) - P(x)} \leq s^{-1}\abs{x-x_0}^sM.
        \label{eq:prop:c2-1}
    \end{equation}
    Restricting $F$ to the ray emanating from $x_0$ in the direction of $-\grad P$ and re-parameterize, we see that \eqref{eq:prop:c2-1} implies
    \begin{equation}
        s^{-1}M t^{s} - \abs{\grad P}t + P(x_0) \geq 0
        \text{ for all } t \in [0,\lambda\delta].
        \label{eq:prop:c2-2}
    \end{equation}
    Plugging $t = \lambda\delta$ into \eqref{eq:prop:c2-2}, we see that \eqref{eq:prop:c2-0} follows.

    For the last conclusion, since $ F $ is assumed to be globally nonnegative, we may set
     $\lambda = 2$ and apply \eqref{eq:prop:c2-2} pointwise everywhere in $\Omega$. 
\end{proof}

\section{Whitney jets}
\label{sect:whitney}

\begin{definition}
    Let $E\subset \Rn$ be a closed set. A \underline{Whitney field} on $E$ is a parametrized family of polynomials $\Vec{P} = (P_x)_{x\in E}$ such that $P_x \in \P$ for all $x \in E$. We define
    \begin{equation}
        \begin{split}
            \norm{\Vec{P}}_{\JC(E)}&:= \sup_{\substack{x \in E\\0 \leq m < s}}\abs{\grad^mP_x(x)} + \sup_{\substack{x,y \in E\\x \neq y\\0 \leq m < s}}\frac{\abs{\grad^m(P_x-P_y)(y)}}{\abs{x-y}^{s-m}}, \\
            \norm{\Vec{P}}_{\JFh(E)}&:= \sup_{\substack{x \in E\\ 1 \leq m < s}}\brac{\frac{\abs{\grad^m P_x(x)}^s}{\abs{P_x(x)}^{s-m}}}^{1/m}, \text{ and }
            \\
            \norm{\vec{P}}_{\JF(E)}&:= \norm{\Vec{P}}_{\JC(E)} + \norm{\Vec{P}}_{\JFh(E)}.
        \end{split}
        \label{eq:Whitney norm}
    \end{equation}
    
    % \begin{equation*}
    %     \norm{\Vec{P}}_{\JF(E)}:= \sup_{\substack{x \in E\\0 \leq m < s}}\abs{\grad^mP_x(x)} + \sup_{\substack{x,y \in E\\x \neq y\\0 \leq m < s}}\frac{\abs{\grad^m(P_x-P_y)(y)}}{\abs{x-y}^{s-m}} + \sup_{\substack{x \in E\\ 1 \leq m < s}}\brac{\frac{\abs{\grad^m P(x_0)}^s}{\abs{P(x_0)}^{s-m}}}^{1/m}.
    % \end{equation*}
    We use $\JF(E)$ to denote the positive cone of Whitney fields $\Vec{P} = (P_x)_{x\in E}$ such that $P_x(x) \geq 0$ for all $x \in E$ and $\norm{\Vec{P}}_{\JF(E)}<\infty$.
\end{definition}
% For convenience, we introduce the notation $\max\limits_{1 \leq m < s}\brac{\frac{\abs{\grad^m P(x_0)}^s}{P(x_0)^{s-m}}}^{1/m}:=\begin{cases}
%     0 &\text{ if } s < 1\\
%     \max_{1\leq m < s}\brac{\frac{\abs{\grad^m P(x_0)}^s}{P(x_0)^{s-m}}}^{1/m} &\text{ otherwise}
% \end{cases}$.

Note that if $E\subset \Rn$ is any set and $F \in \Fs(\Rn)$, then $F$ generates a Whitney field
\begin{equation*}
    \jet_E F :\equiv (\jet_{x}F)_{x \in E} \in \JF(E).
\end{equation*}

\begin{theorem}[Taylor's theorem]\label{thm:Taylor}
    Let $E\subset \Rn$ be a closed set and let $F \in \Fs(\Rn)$. Then $ \jet_EF \in \JF(E) $ and
     $\norm{\jet_E F}_{\JF(E)}\leq C(n,s)\norm{F}_{\Fs(\Rn)}$. 
\end{theorem}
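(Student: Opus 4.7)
The claim is essentially a bookkeeping exercise once one unwinds the definitions, with the only substantive content being the classical Taylor remainder estimate. Here is the plan.

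First I would record the pointwise identity
\begin{equation*}
\grad^m(\jet_x F)(x) \;=\; \grad^m F(x) \qquad \text{for all } x \in \Rn \text{ and } 0 \le m \le \floor{s},
\end{equation*}
which is immediate from differentiating the Taylor polynomial. In particular $(\jet_x F)(x) = F(x) \ge 0$, so $\jet_E F$ is a Whitney field in the positive cone, and the flat part of the norm collapses to
\begin{equation*}
\norm{\jet_E F}_{\JFh(E)} \;=\; \sup_{\substack{x \in E\\ 1 \le m < s}}\brac{\frac{\abs{\grad^m F(x)}^s}{F(x)^{s-m}}}^{1/m} \;\le\; \norm{F}_{\Fsh(\Rn)}.
\end{equation*}
Similarly the zeroth term in $\norm{\jet_E F}_{\JC(E)}$ equals $\sup_{x,m}\abs{\grad^m F(x)} \le \norm{F}_{\Cs(\Rn)}$.

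The remaining piece is the Hölder-type double supremum
\begin{equation*}
\sup_{\substack{x \neq y \in E \\ 0 \le m < s}} \frac{\abs{\grad^m(\jet_x F - \jet_y F)(y)}}{\abs{x-y}^{s-m}}.
\end{equation*}
Using the identity above together with the explicit formula
\begin{equation*}
\grad^m(\jet_x F)(y) \;=\; \sum_{0 \le k \le \floor{s} - m} \grad^{m+k} F(x)\, \frac{(y-x)^{\otimes k}}{k!},
\end{equation*}
the quantity $\grad^m(\jet_x F - \jet_y F)(y) = \grad^m(\jet_x F)(y) - \grad^m F(y)$ is exactly the remainder of the order-$(\floor{s}-m)$ Taylor expansion of $\grad^m F$ centered at $x$ and evaluated at $y$. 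A standard iterated application of the fundamental theorem of calculus along the segment from $x$ to $y$ (which lies in $\Rn$, so no convexity issue arises) gives
\begin{equation*}
\abs{\grad^m(\jet_x F)(y) - \grad^m F(y)} \;\le\; C(n,s)\, \norm{F}_{\Csh(\Rn)}\, \abs{x-y}^{s-m},
\end{equation*}
where the exponent $s-m$ on $\abs{x-y}$ comes from combining the $\floor{s}-m$ powers from the missing Taylor terms with the extra $(s - \floor{s})$ factor supplied by the Hölder continuity of $\grad^{\floor{s}}F$.

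Summing the two contributions yields $\norm{\jet_E F}_{\JC(E)} \le C(n,s)\norm{F}_{\Cs(\Rn)}$, and combining with the flat estimate gives $\norm{\jet_E F}_{\JF(E)} \le C(n,s)\norm{F}_{\Fs(\Rn)}$. I do not anticipate a genuine obstacle: the flat seminorm is designed so that it is preserved under taking jets (since $\jet_x F$ and $F$ agree to all relevant orders at $x$), and the Hölder part reduces to a textbook Taylor remainder bound. The only minor care needed is to confirm that the constant $C(n,s)$ in the Taylor remainder is uniform in $m \in \{0, 1, \dots, \floor{s}\}$, which it is by taking the maximum over finitely many $m$.
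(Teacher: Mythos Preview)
Your proposal is correct and is exactly the argument the paper has in mind: the paper states Theorem~\ref{thm:Taylor} without proof, since it is just the classical Taylor remainder estimate for the $\JC$ part combined with the tautological observation that $\grad^m(\jet_x F)(x) = \grad^m F(x)$ for the $\JFh$ part. There is nothing to add.
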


The Whitney extension map then is a continuous right inverse of $\jet_E : \Fs(\Rn) \to \JF(E)$. See Theorems \ref{thm:whitney-finite} and \ref{thm:whitney-closed} below for the precise statement.

\subsection{A model bump function}
\label{sect:pou}

We fix a model bump function for the rest of the paper.

\begin{proposition}\label{prop:bump-flat}
Consider the standard bump function $g : \R \to \pos$ defined by
\begin{equation}
    g(x) = 
    \begin{cases}
    \exp\brac{1-\frac{1}{1-t^2}} &\text{ for } -1 < t < 1
    \\
    0 &\text{ otherwise}
    \end{cases}
    .
    \label{bump-std}
\end{equation}
Then for all $s>0$, we have
\begin{equation*}
    \norm{g}_{\Fs(\R)} \leq C (n,s).
\end{equation*}
\end{proposition}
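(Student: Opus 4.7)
The plan is to bound the two pieces of $\norm{g}_{\Fs(\R)} = \norm{g}_{\Cs(\R)} + \norm{g}_{\Fsh(\R)}$ separately. For the H\"older--Zygmund part, I would rely on the classical fact that $g \in C^\infty(\R)$: on $(-1,1)$ it is analytic, off $[-1,1]$ it vanishes identically, and every derivative $g^{(m)}$ extends continuously across $\pm 1$ with value $0$ due to the essential--singularity structure of $e^{-1/u}$ at $u = 0$. Since $g$ is compactly supported, each $\sup_\R |g^{(m)}|$ is a finite constant depending only on $m$, and a mean--value argument combined with compact support reduces the H\"older seminorm to $\sup |g^{(\floor{s}+1)}|$. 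This yields $\norm{g}_{\Cs(\R)} \leq C(s)$.

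The main work is controlling $\norm{g}_{\Fsh(\R)}$. Writing $\phi(t) := 1 - (1-t^2)^{-1}$ so that $g = e^\phi$ on $(-1,1)$, I would first establish by induction on $k$ the pole--order estimate
\[
|\phi^{(k)}(t)| \leq C_k (1-t^2)^{-(k+1)}, \qquad t \in (-1,1),\ k \geq 1,
\]
starting from $\phi'(t) = -2t/(1-t^2)^2$ and using the Leibniz rule. Plugging this into Fa\`a di Bruno's formula
\[
g^{(m)}(t) = g(t) \sum_{\pi} \prod_{B \in \pi} \phi^{(|B|)}(t)
\]
(the sum ranging over set--partitions $\pi$ of $\{1,\ldots,m\}$) and collecting pole orders gives
\[
|g^{(m)}(t)| \leq C_m\, g(t)\, (1-t^2)^{-2m}, \qquad t \in (-1,1).
\]

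Finally, I would substitute this bound into the definition of the flat seminorm. For any integer $1 \leq m < s$ and any $t \in (-1, 1)$, setting $u := 1/(1-t^2) \geq 1$,
\[
\frac{|g^{(m)}(t)|^s}{g(t)^{s-m}} \leq C_m^s\, \frac{g(t)^m}{(1-t^2)^{2ms}} = C_m^s\, e^{m}\, u^{2ms}\, e^{-mu},
\]
and the map $u \mapsto u^{2ms} e^{-mu}$ is uniformly bounded on $[1, \infty)$ by a constant depending only on $m$ and $s$. Outside $[-1,1]$ we have $g \equiv 0$ and $g^{(m)} \equiv 0$, so the convention $0/0 = 0$ makes the ratio zero; continuity then handles the endpoints $t = \pm 1$. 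Taking $m$--th roots and the maximum over $1 \leq m < s$ yields $\norm{g}_{\Fsh(\R)} \leq C(s)$, completing the proof. The only mildly delicate step is the Fa\`a di Bruno-based derivative estimate; beyond that the argument is an explicit computation for this specific bump, and I expect no substantial obstacle.
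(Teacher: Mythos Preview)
Your proposal is correct and follows essentially the same approach as the paper: both arguments rest on the observation that $g^{(m)} = q_m \cdot g$ with $q_m$ rational and singular only at $\pm 1$, so that the flat ratio reduces to $|q_m|^{s/m} g$, which is bounded because the exponential decay of $g$ at $\pm 1$ dominates any rational blowup. You are simply more explicit than the paper, spelling out the pole order $|q_m(t)| \leq C_m(1-t^2)^{-2m}$ via Fa\`a di Bruno and then carrying out the substitution $u = (1-t^2)^{-1}$, whereas the paper compresses all of this into one line.
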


\begin{proof}
\newcommand{\bzero}{\mathbf{0}}
It suffices to check $\norm{g}_{\Fsh([-1,1])}$. For $t \in (-1,1)$, we have $  g^{(m)}(t) = q_m(t)g(t)$ where each term of $q_m$ is a rational function, singular only at $\pm 1$. Then 
\begin{equation*}
    \brac{\abs{g^{(m)}(t)}^s/g(t)^{s-m}}^{1/m} = \abs{q_m(t)}^{s/m} \cdot g(t) \leq C(s,m)
\end{equation*}
since $g$ vanishes faster than any polynomial near $\pm 1$. 
\end{proof}

With $g$ as in \eqref{bump-std}, we define the standard bump function $\phi_0:\Rn \to \pos$ by
\begin{equation}
    \phi_0 := g^{\otimes n},\quad 
    (t_1, \cdots, t_n) \mapsto \prod_{j = 1}^n g(t_j).
    \label{eq:phi_0-def}
\end{equation}
 It is clear that 
 \begin{equation*}
     \supp{\phi_0}= [-1,1]^n
     \text{ and }
     \norm{\phi_0}_{\Fs(\Rn)} \leq C(n,s).
 \end{equation*}

Let $\delta > 0$ and let $\tau_\delta: x \mapsto \delta^{-1} x$. A direct computation yields
\begin{equation}
        \norm{\phi\circ \tau_\delta}_{\Fsh(\Rn)} = \delta^{-s}\norm{\phi}_{\Fsh(\Rn)}
        \text{ for any } \phi \in \Fs(\Rn).
        \label{eq:rescaling-delta}
\end{equation}

\subsection{Extension of a single jet}

For $x_0 \in \Rn$ and $M \geq 0$, we define
\begin{equation}
    \Gamma(x_0,M):= \set{P \in \P: P(x_0)\geq 0,\,
    \max_{0 \leq m < s}\abs{\grad^m P(x_0)} \leq M
    \text{ and }
    \max\limits_{1 \leq m < s}\brac{\frac{\abs{\grad^m P(x_0)}^s}{P(x_0)^{s-m}}}^{1/m} \leq M
    .
    }
    \label{eq:Gamma-def}
\end{equation}
and
\begin{equation}
    \Gamma(x_0) := \bigcup_{M\geq 0}\Gamma(x_0,M).
\end{equation}

\begin{lemma}\label{lem:Whitney-single jet}
For every $x_0 \in \Rn$, there exists a map $\T_{x_0} : \Gamma(x_0)\to \Fs(\Rn)$ such that for every $P \in \Gamma(x_0,M)$, the following hold.
\begin{enumerate}[(A)]
    \item $\jet_{x_0}\circ \T_{x_0}[P] \equiv P$.
    \item $\T_{x_0}[P]\geq 0$.
    \item $\norm{\T_{x_0}[P]}_{\Fsh(\Rn)}\leq C(n,s)M$.
    
    \item $\norm{\T_{x_0}[P]}_{\Cs(\Rn)}\leq C(n,s)M$, and consequently, $\norm{\T_{x_0}[P]}_{\Fs(\Rn)} \leq C(n,s)M$.
\end{enumerate}
\end{lemma}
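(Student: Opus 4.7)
The plan is to set $\T_{x_0}[P] := P \Theta_\delta$, where $\Theta_\delta$ is a smooth cutoff identically $1$ on a neighborhood of $x_0$ and supported in a ball of radius $O(\delta)$ about $x_0$, with $\delta \sim (P(x_0)/M)^{1/s}$ calibrated via Lemma \ref{lem:locally constant}(A) so that $P \geq P(x_0)/2$ and $\abs{\grad^k P(x)} \leq CM^{k/s}P(x_0)^{(s-k)/s}$ throughout $\supp \Theta_\delta$.

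First I would dispose of the trivial case $P(x_0) = 0$: the constraint $\abs{\grad^m P(x_0)}^s \leq M^m P(x_0)^{s-m}$ built into $\Gamma(x_0,M)$, together with the convention $a/0 = \infty$ for $a > 0$, forces $\grad^m P(x_0) = 0$ for every $1 \leq m < s$. Since $P \in \P^{\floor{s}}$ is determined by these derivatives at $x_0$, $P \equiv 0$, and I take $\T_{x_0}[P] \equiv 0$. For $P(x_0) > 0$, I would construct once-and-for-all a model cutoff $\Theta \in \Fs(\Rn)$ with $\Theta \equiv 1$ on $[-1,1]^n$, $\supp \Theta \subset [-2,2]^n$, and $\norm{\Theta}_{\Fs(\Rn)} \leq C(n,s)$, obtained as $\eta^{\otimes n}$ with $\eta \in \Fs(\R)$ a smooth plateau built from antiderivatives of the bump $g$ in \eqref{bump-std}; the infinite-order vanishing of $g$ at $\pm 1$ ensures that all derivatives of $\eta$ vanish at the boundary of its support, giving the $\Fs$ bound by exactly the argument of Proposition \ref{prop:bump-flat}. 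I would then take $\delta := c_1(P(x_0)/M)^{1/s}$ with $c_1 = c_1(n,s)$ small enough for Lemma \ref{lem:locally constant}(A) with $\epsilon = 1/2$ to apply on $B(x_0, 2\sqrt{n}\,\delta)$, and set $\Theta_\delta(x) := \Theta((x-x_0)/\delta)$, so that $\supp \Theta_\delta \subset B(x_0, 2\sqrt{n}\,\delta)$. By \eqref{eq:rescaling-delta} and $P(x_0) \leq M$ (which forces $\delta \leq c_1 \leq 1$), this gives $\norm{\Theta_\delta}_{\Fs(\Rn)} \leq C\delta^{-s}$. Conditions (A) and (B) then follow at once: $\Theta_\delta \equiv 1$ on a neighborhood of $x_0$ preserves the jet, and $P\Theta_\delta \geq 0$ by the positivity of $P$ on $\supp \Theta_\delta$.

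The main obstacle will be the flatness bound (C), since a naive Leibniz estimate of $\abs{\grad^m(P\Theta_\delta)}^s/(P\Theta_\delta)^{s-m}$ blows up like $\Theta_\delta^{-(s-m)}$ in the transition region of $\Theta_\delta$. To kill this singularity I would exploit the self-flatness of $\Theta_\delta$: the bound $\norm{\Theta_\delta}_{\Fsh(\Rn)} \leq C\delta^{-s}$ is equivalent to $\abs{\grad^j \Theta_\delta(x)} \leq C\delta^{-j}\Theta_\delta(x)^{(s-j)/s}$, so each Leibniz term satisfies $\abs{\grad^k P(x)}\cdot\abs{\grad^{m-k}\Theta_\delta(x)} \leq CM^{m/s}P(x_0)^{(s-m)/s}\Theta_\delta(x)^{(s-m+k)/s}$ after cancelling $\delta^{-(m-k)}$ against $(M/P(x_0))^{(m-k)/s}$. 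Since $\Theta_\delta \leq 1$, the smallest exponent $(s-m)/s$ at $k = 0$ dominates, yielding $\abs{\grad^m(P\Theta_\delta)(x)}^s \leq CM^mP(x_0)^{s-m}\Theta_\delta(x)^{s-m}$; dividing by $(P\Theta_\delta)^{s-m} \geq CP(x_0)^{s-m}\Theta_\delta(x)^{s-m}$ gives (C), while outside $\supp\Theta_\delta$ the ratio is $0/0 = 0$ by the infinite-order vanishing of $\Theta$. Property (D) then follows from the same pointwise estimates (with $P(x_0) \leq M$ giving $\abs{\grad^m(P\Theta_\delta)} \leq CM$ for $0 \leq m < s$) together with a parallel H\"older estimate for $\grad^{\floor{s}}(P\Theta_\delta)$, simplified by the fact that $\grad^{\floor{s}}P$ is a constant tensor.
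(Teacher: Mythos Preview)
Your proof is correct and follows essentially the same route as the paper: multiply $P$ by a smooth cutoff at scale $\delta \sim (P(x_0)/M)^{1/s}$ calibrated via Lemma~\ref{lem:locally constant}(A), then verify (C) and (D) term-by-term from the Leibniz expansion using the self-flatness of the cutoff, exactly as in the paper's estimates \eqref{eq:lem:Whitney:summand}--\eqref{eq:lem:Whitney-Holder3}. Two small points: to make $\T_{x_0}$ a well-defined map on $\Gamma(x_0)$ rather than an $M$-indexed family, you should take $\delta$ in terms of the minimal $M$ with $P\in\Gamma(x_0,M)$ (this also preserves $\delta\le 1$ and lets you avoid the paper's case split $\delta\gtrless 1$); and your use of a genuine plateau $\Theta\equiv 1$ near $0$ (built from antiderivatives of $g$) is in fact what is needed to secure conclusion~(A), since the bare bump $\phi_0=g^{\otimes n}$ used in the paper has $g''(0)\neq 0$ and so does not have trivial jet at the origin for $\floor{s}\ge 2$.
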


\begin{proof}
    Fix $P \in \Gamma(x_0)$ and let $M := \max\limits_{1 \leq m < s}\brac{\frac{\abs{\grad^m P(x_0)}^s}{P(x_0)^{s-m}}}^{1/m}$. If $P(x_0) = 0$, then $P$ is the zero polynomial and $M=0$, so we can set $\T_{x_0}[P] \equiv 0$. From now on, we assume $P(x_0) > 0$. We may further assume that $M = 1$.
    
    We set 
    \begin{equation*}
        \delta:= P(x_0)^{1/s}
        \text{ so that } P(x_0) = \delta^{s}.
    \end{equation*}
    By the definition of $\Gamma$, we have
    \begin{equation}
        \abs{\grad^m P(x_0)} \leq \delta^{s-m}
        \text{ for } 1 \leq m < s.
        \label{eq:lem:Whitney-single-Pflat}
    \end{equation}

    Let $c_0$ be as in Lemma \ref{lem:locally constant} with $\epsilon = 1/2$, and let $Q := x_0 + [-c_0\delta,c_0\delta] \subset \Rn$.  Thus,
    \begin{equation}
        P(x) \geq 1/2\delta^{s}
        \text{ for all }x \in Q.
        \label{eq:lem:Whitney-lower bound}
    \end{equation}
    From \eqref{eq:lem:Whitney-single-Pflat} and Taylor's theorem, we also see that
    \begin{equation}
        \abs{\grad^m P(x)} \leq \delta^{s-m}C(n,s)
        \text{ for all $x \in Q$ and $0 \leq m < s$.}
        \label{eq:lem:Whitney-single-Pflat2}
    \end{equation}

    Combining  \eqref{eq:lem:Whitney-lower bound} and \eqref{eq:lem:Whitney-single-Pflat2}, we see that
    \begin{equation}
        \brac{\frac{\abs{\grad^m P(x)}^s}{P(x)^{s-m}}}^{1/m} \leq C(n,s)
        \text{ for all $ x\in Q$ and $1 \leq m < s$.}
        \label{eq:lem:Whitney-single-Pflat3}
    \end{equation}

    \paragraph{Case I: suppose $\delta \geq 1$.}

     We let $\theta_0 := \phi_0(\frac{x-x_0}{c_0})$ with $\phi_0$ as in \eqref{eq:phi_0-def}. Define
    \begin{equation}
        \T_{x_0}[P]:= \theta_0\cdot P.
    \end{equation}
    Since $\theta_0 \equiv 1$ near $x_0$, conclusion (A) follows. Since $\supp{\theta_0}\subset x_0 + [-c_0, c_0]^n\subset Q$, conclusion (B) follows from \eqref{eq:lem:Whitney-lower bound}. Thanks to Proposition \ref{prop:Fs basic}(A) and \eqref{eq:lem:Whitney-single-Pflat3}, we have
    \begin{equation*}
        \norm{\theta_0 P}_{\Fsh(Q)} \leq C(n,s)\norm{\theta_0}_{\Fsh(Q)}\norm{P}_{\Fsh(Q)} \leq C(n,s).
    \end{equation*}
    Conclusion (C) follows. Conclusion (D) is similar since $\Cs$ is an algebra. This concludes the analysis of Case I.
    
    \paragraph{Case II: suppose $\delta < 1$.}
    Let $\theta(x) := \phi_0(\frac{x-x_0}{c_0\delta})$ with $\phi_0$ as in \eqref{eq:phi_0-def}. Then
    \begin{enumerate}[($\theta$-1)]
        \item $\theta\equiv 1$ near $x_0$,
        \item $\supp{\theta}\subset Q$, $0\leq \theta \leq 1$,
        \item $\abs{\grad^m\theta}\leq \delta^{-m}\cdot C(n,s)$,
        \item and $\norm{\theta}_{\Fsh(Q)} \leq \delta^{-s}\cdot C(n,s)$, thanks to \eqref{eq:rescaling-delta}.
    \end{enumerate}
    
    We define
    \begin{equation}
        \T_{x_0}[P] := \theta \cdot P.
    \end{equation}

    Conclusion (A) follows from property ($\theta$-1).

    Conclusion (B) follows from \eqref{eq:lem:Whitney-lower bound} and properties ($\theta$-2) and ($\theta$-3).

Now we prove conclusion (C). Write $F := \T_{x_0}[P]$. For $1 \leq m < s$ and $x \in Q$,
\begin{equation}
        \grad^mF(x) = \sum_{k + l = m} C(k,l) \grad^k\theta(x)\otimes \grad^lP(x)
        \label{eq:lem:Whitney:derivatives}
\end{equation}
For each summand, we have
\begin{equation}
    \begin{split}
        \brac{\frac{\abs{\grad^k\theta(x)\otimes \grad^l P(x)}^s}{\theta(x)^{s-m} P(x)^{s-m}}}^{1/m} 
        &\leq C(n,s)\brac{\theta(x)^l P(x)^k \frac{\abs{\grad^k\theta(x)}^s}{\theta(x)^{s-k}}
        \frac{\abs{\grad^l P(x)}^s}{P(x)^{s-l}}
        }^{1/m}\\
        &\leq C(n,s) \theta(x)^{l/m}P(x)^{k/m}\norm{\theta}_{\Fsh(Q)}^{k/m}\norm{P}_{\Fsh(Q)}^{l/m} \\
        \text{by ($\theta$-4) and \eqref{eq:lem:Whitney-single-Pflat3}}&\leq C(n,s) \delta^{sk/m}\delta^{-sk/m} \leq C(n,s). 
    \end{split}
    \label{eq:lem:Whitney:summand}
\end{equation}
Combine \eqref{eq:lem:Whitney:derivatives} and \eqref{eq:lem:Whitney:summand}, we have
\begin{equation*}
    \brac{\frac{\abs{\grad^m F(x)}^s}{F(x)^{s-m}}}^{1/m}
    \leq C(n,s)\sum_{k+l = m}\brac{\frac{\abs{\grad^k\theta(x)\otimes \grad^l P(x)}^s}{\theta(x)^{s-m} P(x)^{s-m}}}^{1/m}  \leq C(n,s).
    \label{eq:lem:Whitney:derivatives-done}
\end{equation*}
Since \eqref{eq:lem:Whitney:derivatives} holds for all $x \in Q$ and $1 \leq m < s$, conclusion (C) follows.

We turn to conclusion (D).

For $x,y \in Q$ and $k,l$ with $k+l = m = \floor{s}$,
\begin{equation}
    \begin{split}
        &\abs{\grad^k\theta(x)\otimes \grad^lP(x)- \grad^k\theta(y)\otimes \grad^lP(y)} \leq 
        \\
        &\quad \quad \quad \abs{\grad^k\theta(x)\otimes (\grad^l P(x)-\grad^lP(y))} + \abs{(\grad^k\theta(x)-\grad^k\theta(y))\otimes \grad^lP(y)}=: A_1 + A_2
    \end{split}
    \label{eq:lem:Whitney-Holder1}
\end{equation}
% If $l = m$, then $k=0$ and $\grad^m P$ is constant, so
% \begin{equation}
%     \begin{split}
%         A_1 + A_2 &= A_2 \\
%         &\leq C(n,s) \norm{\theta}_{\dot{\mathcal{C}}^{s-m}}\abs{x-y}^{s-m}\abs{\grad^m P}\\
%         \text{\eqref{eq:lem:Whitney-single-Pflat2} and ($\theta$-5)}&\leq C(n,s)\delta^{m-s}\delta^{s-m}\abs{x-y}^{s-m} \\
%         &= C(n,s)\abs{x-y}^{s-m}.
%     \end{split}
% \end{equation}
Thanks to \eqref{eq:lem:Whitney-single-Pflat2} and ($\theta$-4), we have
\begin{equation}
    \begin{split}
        A_1 &\leq C(n,s)\norm{\grad^k\theta}_{L^\infty(Q)} \norm{\grad^{l+1}P}_{L^\infty(Q)}\abs{x-y} \\
        &\leq C(n,s)\delta^{-k}\delta^{s-l-1}\abs{x-y}\\
        &\leq C(n,s)\abs{x-y}^{s-m}.
    \end{split}
    \label{eq:lem:Whitney-Holder2}
\end{equation}
Similarly, 
\begin{equation}
    \begin{split}
        A_2 &\leq C(n,s)\norm{\grad^{k+1}\theta}_{L^\infty(Q)}\abs{x-y}\norm{\grad^l P}_{L^\infty(Q)} \\
        &\leq C(n,s)\delta^{-(k+1)}\delta^{s-l}\abs{x-y}\\
        &\leq C(n,s)\abs{x-y}^{s-m}.
    \end{split}
    \label{eq:lem:Whitney-Holder3}
\end{equation}
It follows from \eqref{eq:lem:Whitney:derivatives}, \eqref{eq:lem:Whitney-Holder1}--\eqref{eq:lem:Whitney-Holder3} that
\begin{equation*}
    \norm{F}_{\Csh(\Rn)}\leq C(n,s).
\end{equation*}

Finally, using the assumption that $\abs{\grad^mP(x_0)} \leq 1$ for $0 \leq m < s$, we can apply \eqref{eq:lem:Whitney-single-Pflat2} and ($\theta$-4) to estimate \eqref{eq:lem:Whitney:derivatives} to obtain
\begin{equation*}
    \abs{\grad^m F(x)} \leq C(n,s)
    \text{ for all $x \in Q$ and $0 \leq m < s$}.
\end{equation*}
Conclusion (D) follows.

\end{proof}

\subsection{Whitney Extension Theorem for finite sets}

Let $E\subset \Rn$ be a finite set. A finite \underline{Whitney decomposition of $\Rn$ with respect to $E$} is a collection of dyadic cubes $\Lambda_E$ defined by
\begin{equation*}
    \Lambda_E := \bigcup_{Q \in \mathbb{Z}^n}\Lambda_{Q}
    \text{ where } \Lambda_{Q} := \begin{cases}
        \set{Q} &\text{ if } \#(E \cap 3Q) \leq 1\\
        \bigcup\set{\Lambda_{Q'}: \text{ $(Q')^+ = Q$}} &\text{ otherwise}
    \end{cases}.
\end{equation*}

Given $Q,Q' \in \Lambda_E$, we say that $Q$ touches $Q'$ if their closures have nonempty intersection.
The cubes in $\Lambda$ satisfy a nice geometric property:
\begin{equation}
    1/2\delta_Q \leq \delta_{Q'} \leq 2\delta_Q
    \text{ whenever $Q$ touches $Q'$}.
    \label{eq:comparable lengthscale}
\end{equation}
As a consequence,
\begin{equation}
    \#\set{Q\in\Lambda_E : 1.1Q \ni x} \leq 2^n
    \text{ for all } x \in \Rn.
    \label{eq:bounded overlap}
\end{equation}

For each $Q \in \Lambda_E$, we set
\begin{equation*}
    \phi_Q(x) := \phi_0\brac{2(x-c_Q)/(1.1\delta_Q)}
\end{equation*}
with $ \phi_0 $ as in \eqref{bump-std}. 
Then 
\begin{equation*}
    \supp{\phi_Q} = 1.1 Q
    ,\,
    \abs{\grad^k\phi_Q(x)}\leq \delta_Q^{-k}\cdot C(n,k), \text{ and }
    \norm{\phi_Q}_{\Fsh(\Rn)} \leq \delta_Q^{-s}\cdot C(n,s)
    \text{ for } 0 \leq k < s.
\end{equation*}
We define
\begin{equation*}
    \theta_Q(x) := \frac{\phi_{1.1Q}(x)}{\sum_{Q\in\Lambda_E}\phi_{1.1Q}(x)}.
\end{equation*}
The denominator is a finite sum, thanks to \eqref{eq:bounded overlap}.
$\set{\theta_Q: Q \in \Lambda_E}$ has the following properties.
\begin{enumerate}[(POU-1)]
    \item $0 \leq \theta_Q \leq 1$, and $\sum_{Q\in\Lambda}\theta_Q \equiv 1$;
    \item $\supp{\theta_Q}\subset 1.1Q$ for each $Q \in \Lambda_E$, so each $x \in \Rn$ lies in the support of at most $2^n$ partition functions; 
    \item $\abs{\grad^m\theta_Q}\leq \delta_Q^{-m}\cdot C(n,s)$ for $1 \leq m <s$;
    \item $\norm{\theta_Q}_{\Fsh(\Rn)} \leq \delta^{-s}\cdot C(n,s)$. 
\end{enumerate}

For each $Q \in \Lambda_E$, we define a map $\T_Q $ as follows.
\begin{itemize}
    \item Suppose $E\cap 3Q \neq \void$, we pick $x_Q \in E\cap 3Q$ and set $\T_Q := \T_{x_Q}$, with $\T_{x_Q}$ as in Lemma \ref{lem:Whitney-single jet}. We set $P_Q \equiv P_{x_Q}$.
    \item Suppose $E\cap 3Q = \void$ and $\delta_Q < 1$. Then $E \cap 3Q^+ \neq \void$. We fix any $x_Q \in E\cap 3Q^+$ and set $\T_Q := \T_{x_Q}$, with $\T_{x_Q}$ as in Lemma \ref{lem:Whitney-single jet}. We set $P_Q \equiv P_{x_Q}$.
    \item Suppose $E\cap 3Q = \void$ and $\delta_Q = 1$. We set $\T_Q := $ the zero map. We set $P_Q \equiv 0$.
\end{itemize}
Define $\T_E : \JF(E) \to \Fs(\Rn)$ by
\begin{equation}
    \T_E[\Vec{P}](x):= \sum_{Q\in \Lambda_E}\theta_Q(x)\cdot \T_{x_Q}[P_Q](x).
    \label{eq:whitney-finite-op}
\end{equation}

\begin{theorem}\label{thm:whitney-finite}
    Let $E \subset \Rn$ be a finite set. The map $\T_E : \JF(E) \to \Fs(\Rn)$ defined in \eqref{eq:whitney-finite-op} satisfies the following properties.
    \begin{enumerate}[(A)]
        \item $\jet_E\circ\T_E[\Vec{P}]\equiv \Vec{P} $ for all $\Vec{P}\in \JF(E)$, 
        \item $\norm{\T_E[\Vec{P}]}_{\Cs(\Rn)}\leq C(n,s)\norm{\Vec{P}}_{\JC(E)}$,  and
        \item $\norm{\T_E [\Vec{P}]}_{\Fsh(\Rn)} \leq C(n,s)\norm{\Vec{P}}_{\JF(E)}$, and hence, $\norm{\T_E [\Vec{P}]}_{\Fs(\Rn)} \leq C(n,s)\norm{\Vec{P}}_{\JF(E)}$.
    \end{enumerate}
\end{theorem}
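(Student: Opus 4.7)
The proof naturally breaks into verifying conclusions (A), (B), and (C) in turn.

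For (A), fix $x \in E$. Because $E$ is finite, at most $2^n$ cubes $Q \in \Lambda_E$ have $x \in \supp \theta_Q = 1.1Q$, and each such $Q$ has sidelength $\delta_Q$ comparable to $\dist{\set{x}}{E\setminus\set{x}}$, with base point $x_Q \in E$ satisfying $\abs{x - x_Q} \leq C(n)\delta_Q$. Using the partition of unity identity $\sum_Q \theta_Q \equiv 1$, I split
\begin{equation*}
\T_E[\vec{P}] - P_x \;=\; \sum_{Q} \theta_Q \bigl( \T_{x_Q}[P_{x_Q}] - P_{x_Q} \bigr) \;+\; \sum_{Q} \theta_Q \bigl(P_{x_Q} - P_x\bigr).
\end{equation*}
The inner difference in the first sum has trivial $\floor{s}$-jet at $x_Q$ by Lemma \ref{lem:Whitney-single jet}(A), so combined with the $\Cs$-bound from Lemma \ref{lem:Whitney-single jet}(D), its derivatives at $x$ are $O(\abs{x - x_Q}^{s-m})$. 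The second sum's derivatives at $x$ are also $O(\abs{x - x_Q}^{s-m})$ by the very definition of $\norm{\vec{P}}_{\JC(E)}$. Together with $\abs{x - x_Q} \leq C\delta_Q$ and the fact that $\delta_Q$ can be chosen arbitrarily small near $x$, the total $\floor{s}$-jet of the difference vanishes at $x$, yielding $\jet_x \T_E[\vec{P}] \equiv P_x$.

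For (B), this is a direct adaptation of the classical Whitney extension argument: using the Leibniz rule, property (POU-3), and Lemma \ref{lem:Whitney-single jet}(D), one bounds $\abs{\grad^m F(x)}$ for $0 \leq m \leq \floor{s}$. The H\"older estimate on $\grad^{\floor{s}} F$ is split into a near-point case ($\abs{x-y}$ no larger than the relevant cube sidelength), handled by the local $\Cs$ bound on each $\theta_Q \T_{x_Q}[P_{x_Q}]$ via the bounded-overlap property, and a far-point case, which reduces via Taylor expansion around the base points to the Whitney compatibility bounds encoded in $\norm{\vec{P}}_{\JC(E)}$.

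For (C), the $\Fsh$ estimate is the main new contribution. I fix $x \in \Rn$ and $1 \leq m < s$. If $F(x) = 0$, the flatness of each nonzero contribution $\theta_Q \T_{x_Q}[P_{x_Q}]$ at $x$ forces $\grad^m F(x) = 0$, so the quotient vanishes by convention. Otherwise, the $O(1)$ cubes $Q$ with $\theta_Q(x) > 0$ have mutually comparable sidelengths. Expanding $\grad^m F(x)$ via Leibniz and mimicking the cube-local estimate in the proof of Lemma \ref{lem:Whitney-single jet} (see the computation leading to \eqref{eq:lem:Whitney:summand}), each summand $\grad^k \theta_Q(x) \otimes \grad^l \T_{x_Q}[P_{x_Q}](x)$ with $k + l = m$ is bounded by
\begin{equation*}
C(n,s) \norm{\vec{P}}_{\JF(E)}^{m/s} \bigl(\theta_Q(x) \T_{x_Q}[P_{x_Q}](x)\bigr)^{(s-m)/s}.
\end{equation*}
Summing these $O(1)$ contributions via Jensen's inequality (since $(s-m)/s \in (0,1)$), I obtain $\abs{\grad^m F(x)} \leq C \norm{\vec{P}}_{\JF(E)}^{m/s} F(x)^{(s-m)/s}$, which is equivalent to the desired $\Fsh$ bound. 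The main obstacle is the balancing of local ``scales'' in the summand-level estimate: the scale $\delta_{x_Q, P_{x_Q}}$ attached to $P_{x_Q}$ via Lemma \ref{lem:locally constant} may differ across cubes, and I plan to exploit the Whitney compatibility of $\vec{P}$ to argue that these scales are comparable whenever two $\theta_Q, \theta_{Q'}$ jointly support $x$, which is what makes the summand-level estimate go through uniformly.
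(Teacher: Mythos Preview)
Your treatment of (A) is confused: in the finite decomposition $\Lambda_E$ the cubes near a point $x\in E$ have a \emph{fixed} positive sidelength, so ``$\delta_Q$ can be chosen arbitrarily small'' is simply false. What actually makes (A) immediate (and what the paper uses) is that for every $Q\in\Lambda_E$ with $x\in 1.1Q\subset 3Q$, the stopping rule $\#(E\cap 3Q)\le 1$ forces $E\cap 3Q=\{x\}$ and hence $x_Q=x$; thus $\T_E[\vec P]=\T_x[P_x]$ in a neighborhood of $x$, and Lemma~\ref{lem:Whitney-single jet}(A) finishes. Your estimates happen to collapse to zero for the same reason ($\abs{x-x_Q}=0$), but not for the reason you give.

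The real gap is in (C). Your summand-level bound
\[
\abs{\grad^k\theta_Q(x)\otimes\grad^l F_Q(x)}\le C\,\norm{\vec P}_{\JF}^{m/s}\bigl(\theta_Q(x)F_Q(x)\bigr)^{(s-m)/s}
\]
unwinds (via $\norm{\theta_Q}_{\Fsh}\le C\delta_Q^{-s}$ and $\norm{F_Q}_{\Fsh}\le CM_2$) to the requirement $F_Q(x)\le C\delta_Q^{\,s}(M_1+M_2)$. This is precisely the hypothesis of the paper's Case~II and is \emph{not} true in general; the computation you cite from Lemma~\ref{lem:Whitney-single jet} works there only because $P(x)\le C\delta^s$ by construction. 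No ``comparability of scales $\delta_{x_Q,P_{x_Q}}$ across touching cubes'' rescues this: even if all $F_Q(x)$ are mutually comparable (which Whitney compatibility does give when $F(x)\gg\delta_Q^{\,s}M$), the ratio $\delta_Q^{-k}F(x)^{k/s}/M^{k/s}$ is then large, and your bound fails. The missing ingredient is the standard subtraction trick: since $\sum_Q\grad^k\theta_Q\equiv 0$ for $k\ge1$, one rewrites the cross terms as $\grad^k\theta_Q\otimes\grad^l(F_Q-F_{Q(x)})$. Then Whitney compatibility gives $\abs{\grad^l(F_Q-F_{Q(x)})(x)}\le C\delta_Q^{\,s-l}M_1$, and in the regime $F(x)\ge\delta_Q^{\,s}M_2$ one divides through to get the desired bound; in the complementary regime $F(x)<\delta_Q^{\,s}M_2$ your flatness argument (the paper's Case~II) applies. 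Without this case split and the subtraction, part (C) does not go through.
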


\begin{proof}

Let $ F := \T_E[\vec{P}] $ and let $F_Q := \T_{x_Q}[P_Q]$ for $ Q \in\Lambda_E $. 

Since $F$ is defined to be the convex combination of nonnegative functions, we have $F \geq 0$.

Conclusion (A) follows from Lemma \ref{lem:Whitney-single jet}(A) and the fact that $\sum_{Q\in\Lambda_E}\theta_Q \equiv 1$.  

Conclusion (B) is almost identical to the proof of the classical Whitney Extension Theorem for $ \Cs(\Rn) $ \cite{G58}. The first minor difference is that we use a finite cutoff of the Whitney decomposition. Secondly,
we glue together $F_Q $ instead of $P_Q$ to preserve nonnegativity, which is harmless thanks to Taylor's theorem.

% We turn to conclusion (B). Write $F := \T_E[\Vec{P}]$, $F_Q:= \T_{x_Q}[P_Q]$. Also set
% \begin{equation*}
%     M_1 := \norm{\Vec{P}}_{\JC(E)}
%     \text{ and }
%     M_2 := \norm{\Vec{P}}_{\JFh(E)}.
% \end{equation*}
% Let $m \in \mathbb{N}$ with $0 < m < s$ and $x \in \Rn$. There exists a unique $Q(x) \in \Lambda_E$ containing $x$. We also write $\delta = \delta_{Q(x)}$.

% We begin with the first term. Since $\abs{\grad^m F_{Q}(x)} \leq \norm{F_{Q}}_{\Cs(\Rn)} \leq C(n,s)M_1$, we have
% \begin{equation}
% \abs{\sum_{{Q}\in\Lambda_E}\theta_{Q}(x)\grad^mF_{Q}(x)} \leq C(n,s)M_1.
% \label{eq:thm:whitney:patch estimate-2}
% \end{equation}

% We move on to the second term. By the triangle inequality, 

% Thanks to \eqref{eq:thm:whitney:neighbor estimate} and Property (POU-3) of $\set{\theta_Q}$, we have
% \begin{equation}
%    \begin{split}
%         \abs{\grad^k\theta_Q(x) \otimes \grad^l(F_{Q}-F_{Q(x)})(x)} &\leq \delta^{s-(k+l)}\cdot C(n,s)\max\set{\norm{F_{Q}}_{\Cs(\Rn)}, \norm{F_{Q(x)}}_{\Cs(\Rn)}}\\
%         &\leq C(n,s)M_1.
%    \end{split}
%     \label{eq:thm:whitney:patch estimate}
% \end{equation}
% Conclusion (B) follows from \eqref{eq:thm:whitney:derivatives}, \eqref{eq:thm:whitney:patch estimate-2}, and \eqref{eq:thm:whitney:patch estimate}. 

Now we prove conclusion (C). Note that there is nothing to prove if $s \leq 1$. For the rest of the proof, we assume $s > 1$. We set
\begin{equation*}
    M_1 := \norm{\Vec{P}}_{\JC(E)}
    \text{ and }
    M_2 := \norm{\Vec{P}}_{\JFh(E)}.
\end{equation*}

 For $1 \leq m < s$, $x \in \Rn$, $Q(x) \in \Lambda_E$ containing $x$, the product rule yields
\begin{equation}
    \grad^m F(x) = \sum_{Q\in\Lambda_E}\theta_{Q}(x)\grad^mF_{Q}(x) + \sum_{Q \text{ touches } Q(x)}\sum_{\substack{k + l = m\\k\geq 1}}C(k,l) \grad^k\theta_{Q}(x)\otimes \grad^l (F_{Q}-F_{Q(x)})(x).
    \label{eq:thm:whitney:derivatives}
\end{equation}
Note that in \eqref{eq:thm:whitney:derivatives}, every sum has bounded number of summands, thanks to (POU-2) and \eqref{eq:bounded overlap}. We will repeatedly use this fact without further referencing it.

Using \eqref{eq:thm:whitney:derivatives}, we see that to estimate $\norm{F}_{\Fsh}$, it suffices to estimate the following two types of terms:
\begin{equation*}
    A_{0,m}:= \brac{\frac{\abs{\theta_Q(x)\grad^m F_Q(x)}^{s}}{F(x)^{s-m}}}^{1/m}
    \text{ and }
    A_{k,l}:=\brac{\frac{\abs{\grad^k\theta_{Q}(x)\otimes \grad^l(F_{Q}-F_{Q(x)})(x)}^s}{F(x)^{s-m}}}^{1/m},
\end{equation*}
with the understanding that $Q$ touches $Q(x)$ and $k+l = m$.

For the first type, we have
\begin{equation}
    A_{0,m}\leq \theta_Q(x)^{s/m}\brac{\frac{\abs{\grad^mF_{Q}(x)}^s}{F_{Q}(x)^{s-m}} }^{1/m} \leq \norm{F_Q}_{\Fsh(\Rn)} \leq M_2.
    \label{eq:epsilon-0m}
\end{equation}

% \begin{equation}
% \begin{split}    \brac{\frac{\abs{\sum_{{Q}\in\Lambda_E}\theta_{Q}(x)\grad^mF_{Q}(x)}^s}{F(x)^{s-m}}}^{1/m} &\leq C(n,s) \sum_{Q\in\Lambda_E}\theta_{Q}(x)\brac{\frac{\abs{\grad^mF_{Q}(x)}^s}{F_{Q}(x)^{s-m}} }^{1/m}\\
%     &\leq C(n,s)\sum_{Q\in\Lambda_E}\norm{F_Q}_{\Fsh(\Rn)}\\
%     &\leq C(n,s)M. 
% \end{split}
% \end{equation}

For the analysis of $A_{k,l}$, first we estimate $\abs{\grad^l(F_Q - F_{Q(x)})(x)}$. By the triangle inequality,
\begin{equation}
    \begin{split}
        \abs{\grad^l(F_{Q} - F_{Q(x)})(x)} &\leq \abs{\grad^l (F_{Q} - P_{Q})(x)} + \abs{\grad^l(F_{Q(x)} - P_{Q(x)})(x)} + \abs{\grad^l(P_{Q}-P_{Q(x)})(x)}.
    \end{split}
    \label{eq:thm:whitney:123}
\end{equation}
Thanks to Lemma \ref{lem:Whitney-single jet}(B) and Taylor's theorem, we have
\begin{equation}
    \begin{split}
        \abs{\grad^l (F_{Q} - P_{Q})(x)} &\leq \abs{x-x_{Q}}^{s-l}\cdot C(n,s)\norm{F_{Q}}_{\Cs(\Rn)} \leq \delta^{s-l}\cdot C(n,s)M_1, \text{ and }\\
        \abs{\grad^l (F_{Q(x)} - P_{Q(x)})(x)} &\leq \abs{x-x_{Q(x)}}^{s-l}\cdot C(n,s)\norm{F_{Q(x)}}_{\Cs(\Rn)} \leq \delta^{s-l}\cdot C(n,s)M_1.
    \end{split}
    \label{eq:thm:whitney:12}
\end{equation} 
Recall that $x_Q \in E\cap 3Q^+$ for every $Q \in \Lambda_E$. Thanks to \eqref{eq:comparable lengthscale} and Taylor's theorem, we have
\begin{equation}
    \abs{\grad^l(P_{Q}-P_{Q(x)})(x)}\leq
    \abs{x_{Q}-x_{Q(x)}}^{s-l}\cdot C(n,s)M_1
    \leq 
    \delta^{s-l}\cdot C(n,s)M_1.
    \label{eq:thm:whitney:3}
\end{equation}
Combining \eqref{eq:thm:whitney:123}--\eqref{eq:thm:whitney:3} above, we have
\begin{equation}
    \abs{\grad^l(F_{Q}-F_{Q(x)})(x)} \leq \delta^{s-l}\cdot C(n,s)M_1
    .
    \label{eq:thm:whitney:neighbor estimate}
\end{equation}

We then consider two cases. 

\paragraph{Case I: $F(x)\geq \delta^s\cdot M_2$.}

Thanks to \eqref{eq:thm:whitney:neighbor estimate} and (POU-3), we have
\begin{equation}
    \begin{split}
        A_{k,l} \leq  \brac{\delta^{-ks}\delta^{(s-l)s}\delta^{-(s-m)s}}^{1/m} C(n,s)(M_1 + M_2) = C(n,s)(M_1 + M_2).
    \end{split}
    \label{eq:epsilon-kl-1}
\end{equation}

\paragraph{Case II: $F(x) < \delta^s\cdot M_2$}

This implies that 
\begin{equation}
    F_Q(x) \leq \delta^s\cdot M_2
    \text{ for every $Q\in \Lambda_E$ with $1.1Q\ni x$.}
    \label{eq:FQ-small}
\end{equation}
Since $\norm{F_Q}_{\Fsh(\Rn)} \leq C(n,s)M_2$ by Lemma \ref{lem:Whitney-single jet}, we see that
\begin{equation*}
    \abs{\grad^l F_Q(x)} \leq \delta^{s-l}\cdot C(n,s)M_2.
\end{equation*}
Therefore, 
\begin{equation}
    \begin{split}
        \brac{\frac{\abs{
    \grad^k\theta_Q(x)\otimes \grad^l F_Q(x)
    }^s}{F(x)^{s-m}}}^{1/m}
    &\leq \brac{\frac{\abs{
    \grad^k\theta_Q(x)\otimes \grad^l F_Q(x)
    }^s}{\theta_Q^{s-m}F_Q(x)^{s-m}}}^{1/m}\\
    &\leq C(n,s)\brac{ \theta_Q(x)^lF_Q(x)^k \frac{\abs{\grad^k\theta_Q(x)}^s}{\theta_Q(x)^{s-m}} \frac{\abs{\grad^l F_Q(x)}^s}{F_Q(x)^{s-l}} }^{1/m}
    \\
   \text{\eqref{eq:FQ-small} and (POU-3)} &\leq C(n,s)\theta_Q(x)^{l/m}F_Q(x)^{k/m}\norm{\theta_Q}_{\Fsh(\Rn)}^{k/m}\norm{F_Q}_{\Fsh(\Rn)}^{l/m}\\
    &\leq \delta^{k/m}\delta^{-k/m}\cdot C(n,s)M_2 = C(n,s)M_2.
    \end{split}
    \label{eq:FQ-small-flat}
\end{equation}
Since $Q$ touches $Q(x)$, $\abs{\grad^k\theta_Q(x)}\leq C(n,s)\theta_{Q(x)}$. By a similar argument as in \eqref{eq:FQ-small-flat}, we have
\begin{equation}
    \brac{\frac{\abs{
    \grad^k\theta_Q(x)\otimes \grad^l F_{Q(x)}(x)
    }^s}{F(x)^{s-m}}}^{1/m} \leq C(n,s)M_2.
    \label{eq:FQ-small-flat2}
\end{equation}
Combining \eqref{eq:FQ-small-flat} and \eqref{eq:FQ-small-flat2}, we can conclude that
\begin{equation}
    A_{k,l} \leq C(n,s)M_2.
    \label{eq:epsilon-kl-2}
\end{equation}

Thanks to \eqref{eq:epsilon-0m}, \eqref{eq:epsilon-kl-1}, and \eqref{eq:epsilon-kl-2}, we have
\begin{equation}
    \begin{split}
    \brac{\frac{\abs{\grad^mF(x)}^s}{F(x)^{s-m}}}^{1/m}
    &\leq C(n,s) \sum_{Q\in\Lambda_E} \brac{\frac{\abs{\theta_Q(x)\grad^m F_Q(x)}^{s}}{F(x)^{s-m}}}^{1/m}
    \\
    &\quad \quad \quad + C(n,s)\sum_{\substack{Q\text{ touches }Q(x)\\k + l = m\\k \geq 1}}
    \brac{\frac{\abs{\grad^k\theta_{Q}(x)\otimes \grad^l(F_{Q}-F_{Q(x)})(x)}^s}{F(x)^{s-m}}}^{1/m}\\
    &\leq C(n,s)M_2.
    \end{split}
    \label{eq:conc-c}
\end{equation}
Since \eqref{eq:conc-c} holds for all $x \in \Rn$ and $0 \leq m < s$, we see that conclusion (C) follows. This concludes the proof of Theorem \ref{thm:whitney-finite}.
\end{proof}

\subsection{Whitney Extension Theorem for arbitrary closed sets}

For this section, all the dyadic cubes are assumed to be closed.

Let $E\subset \Rn$ be a closed set (not necessarily finite).  Let $\Lambda_E^\infty$ be the classical Whitney decomposition of $\Rn\setminus E$ defined by
\begin{equation*}
    \Lambda_E^\infty := \bigcup_{Q \in \mathbb{Z}^n}\Lambda_{Q}^\infty
    \text{ where } \Lambda_{Q}^\infty := \begin{cases}
        \set{Q} &\text{ if } E \cap 3Q =\void \\
        \bigcup\set{\Lambda_{Q'}^\infty: \text{ $(Q')^+ = Q$}} &\text{ otherwise}
    \end{cases}.
\end{equation*}
As before,
\begin{equation*}
    1/2\delta_Q\leq \delta_{Q'} \leq 2\delta_Q
    \text{ whenever $Q, Q' \in \Lambda_E^\infty$ touch each other.}
\end{equation*}
Thus,
cubes in $\Lambda_E^\infty$ have bounded overlap, and the partition of unity $\set{\theta_Q:Q\in\Lambda_E^\infty}$ adapted to $\Lambda_E^\infty$ satisfies (POU-1)--(POU-4).

For each $Q \in \Lambda_E^\infty$, we pick $x_Q \in E$ (not necessarily unique) such that
\begin{equation*}
    \dist{x_Q}{E} = \dist{E}{Q}.
\end{equation*}
For $\Vec{P} = (P_y)_{y\in E} \in \JF(E)$ We define
\begin{equation}
    \T_E[\Vec{P}](x) = \begin{cases}
        P_x(x) &\text{ if } x \in E\\
    \sum_{Q\in\Lambda_E^\infty}\theta_Q(x)\T_{x_Q}[P_{x_Q}](x) &\text{ if } x \in \Rn\setminus E.
        \end{cases}
        \label{eq:Whitney-extension-def-infinite}
\end{equation}

\begin{theorem}\label{thm:whitney-closed}
    Let $E \subset \Rn$ be a closed set. The map $\T_E : \JF(E) \to \Fs(\Rn)$ defined in \eqref{eq:Whitney-extension-def-infinite} satisfies the following properties.
    \begin{enumerate}[(A)]
        \item $\jet_E\circ\T_E[\Vec{P}]\equiv \Vec{P} $ for all $\Vec{P}\in \JF(E)$, 
        \item $\norm{\T_E[\Vec{P}]}_{\Cs(\Rn)}\leq C(n,s)\norm{\Vec{P}}_{\JC(E)}$,  and
        \item $\norm{\T_E [\Vec{P}]}_{\Fsh(\Rn)} \leq C(n,s)\norm{\Vec{P}}_{\JF(E)}$, and thus, $\norm{\T_E [\Vec{P}]}_{\Fs(\Rn)} \leq C(n,s)\norm{\Vec{P}}_{\JF(E)}$.
    \end{enumerate}
\end{theorem}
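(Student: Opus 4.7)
The plan is to mirror the proof of Theorem \ref{thm:whitney-finite} on $\Rn\setminus E$, where the classical Whitney decomposition $\Lambda_E^\infty$ enjoys the same bounded overlap \eqref{eq:bounded overlap} and neighboring-lengthscale comparison \eqref{eq:comparable lengthscale}, and to deal with the points of $E$ by a limit argument through $\Rn\setminus E$. The underlying observation is that on $\Rn\setminus E$ the sum defining $\T_E[\vec{P}]$ is locally finite (at most $2^n$ terms at each point), so the formula \eqref{eq:Whitney-extension-def-infinite} and the product-rule identity \eqref{eq:thm:whitney:derivatives} continue to make sense verbatim.

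Conclusions (A) and (B) are the classical Whitney extension theorem \cite{G58}, with the standard modification of gluing the local nonnegative extensions $F_Q := \T_{x_Q}[P_{x_Q}]$ instead of the polynomials $P_{x_Q}$; this is harmless thanks to Lemma \ref{lem:Whitney-single jet}(D) and Taylor's theorem applied to $F_Q - P_{x_Q}$. For the jet agreement at an accumulation point $y\in E$, I would let $z\to y$ through $\Rn\setminus E$ and use, for every contributing cube $Q\ni z$, the distance estimate $\abs{x_Q - y} \leq \abs{x_Q - z} + \abs{z - y} \lesssim \abs{z - y}$ together with the Whitney-field bound on $\grad^m(P_{x_Q} - P_y)$ and Taylor's theorem on $F_Q - P_{x_Q}$ to conclude that $\grad^m(F_Q - P_y)(z) = O(\abs{z-y}^{s-m})$; summing against the partition of unity and absorbing the product-rule cross-terms yields $\jet_y F \equiv P_y$.

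For conclusion (C) at a point $x\in\Rn\setminus E$, the argument of Theorem \ref{thm:whitney-finite} carries over verbatim: the decomposition \eqref{eq:thm:whitney:derivatives} reduces the flat-seminorm bound to the two estimates $A_{0,m}$ and $A_{k,l}$, and the Case I / Case II dichotomy (based on whether $F(x) \geq \delta_{Q(x)}^s\norm{\vec{P}}_{\JFh(E)}$) produces the bounds \eqref{eq:epsilon-0m}, \eqref{eq:epsilon-kl-1}, and \eqref{eq:epsilon-kl-2}. In fact the infinite case is slightly easier at this step because $x_Q \in E$ here, so \eqref{eq:thm:whitney:neighbor estimate} holds with $\abs{x_Q - x_{Q(x)}} \lesssim \delta_{Q(x)}$. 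At $y\in E$, once (A) is in hand we have $\grad^m F(y) = \grad^m P_y(y)$ and $F(y) = P_y(y)$, so the flat quantity $\brac{\abs{\grad^m F(y)}^s / F(y)^{s-m}}^{1/m}$ is controlled directly by $\norm{\vec{P}}_{\JFh(E)}$.

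The main obstacle I anticipate is the H\"older continuity of $\grad^{\floor{s}} F$ across the boundary of $E$, namely estimating $\abs{\grad^{\floor{s}} F(z_1) - \grad^{\floor{s}} F(z_2)}/\abs{z_1 - z_2}^{s-\floor{s}}$ when one or both of the $z_i$ lie in $E$, or when $z_1, z_2$ straddle $E$. The standard Whitney trick of inserting a nearest point of $E$ on the segment between $z_1$ and $z_2$ reduces the estimate to the interior bound already established on $\Rn\setminus E$ together with the jet identification from (A); the bookkeeping requires verifying that the flatness-weighted constants obtained in the Case I / Case II split remain uniform in this limiting regime, but no new ideas beyond those in the finite case are needed.
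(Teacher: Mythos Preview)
Your proposal is correct and follows essentially the same approach as the paper, which itself only sketches the argument by saying the proof ``uses similar quantitative estimates as in Theorem \ref{thm:whitney-finite} along with a convergence argument'' and defers the details to the standard references \cite{BruAYbook,St79,W34-1}. Your outline in fact supplies more detail than the paper does: the paper relies on the reader knowing the classical Whitney machinery for (A) and (B) and trusting that the Case I / Case II split for the flat seminorm transfers unchanged to $\Rn\setminus E$, which is exactly what you spell out.
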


The proof of Theorem \ref{thm:whitney-closed} uses similar quantitative estimates as in Theorem \ref{thm:whitney-finite} along with a convergence argument. The details can be found in \cite{BruAYbook,St79,W34-1}.

\section{Finiteness Principles}
\label{sect:fp}

\subsection{Shape Fields}

\begin{definition}\label{def:sf}
    Let $E\subset \Rn$ be a finite set. For each $x \in E$ and $M \geq 0$, let $\Gamma_0(x,M)\subset \P$ be a (possibly empty) convex set. We say $\brac{\Gamma_0(x,M)}_{x\in E, M \geq 0}$ is a \underline{shape field} if 
    \begin{equation*}
        \Gamma_0(x,M)\subset \Gamma_0(x,M')
        \text{ for all } x \in E \text{ and } 0 \leq M \leq M' < \infty.
    \end{equation*}
    Let $A_0, \delta_0 > 0$. We say $\brac{\Gamma_0(x,M)}_{x\in E, M \geq 0}$ is \underline{$(A_0,\delta_0)$-Whitney convex} if given 
    \begin{itemize}
        \item $\delta \in (0,\delta_0]$, $x_0 \in E$, $M \geq 0$,
        \item $P_1, P_2 \in \Gamma_0(x_0,M)$ with $\abs{\grad^m(P_1-P_2)(x_0)} \leq M\delta^{s-m}$ for all $0 \leq m < s$,
        \item $Q_1, Q_2 \in \P$ with 
        $\jet_{x_0}(Q_1^2 + Q_2^2)\equiv 1 $ and
        $\abs{\grad^mQ_j(x_0)} \leq \delta^{-m}$ for $j = 1,2$, $0 \leq m < s$,
    \end{itemize}
    we have
    \begin{equation*}
        P:\equiv \jet_{x_0}\brac{Q_1^2P_1 + Q_2^2P_2} \in \Gamma_0(x_0,A_0M).
    \end{equation*}
     
\end{definition}

By adapting the proof of the Finiteness Principle for shape fields from \cite{FIL16,FIL16+} (see also \cite{F05-J,F05-Sh}) to $\Cs$, we obtain the following result.

\begin{theorem}[$\Cs(\Rn)$ Finiteness Principle for Shape Fields]
\label{thm:sf}
    There exists a large constant $k = k(n,s)$ such that the following hold.

    Let $E\subset \Rn$ be a finite set. Let $(\Gamma_0(x,M))_{x\in E, M \geq 0}$ be a $(A_0,\delta_0)$-Whitney convex shape field. Let $Q_0\subset \Rn$ be a cube with $\delta_{Q_0}\leq \delta_0$. Let $x_0 \in E\cap 5Q_0$ and $M_0 \geq 0$ be given. Suppose for every $S\subset E$ with $\#S \leq k$, there exists a Whitney field $\Vec{P} = (P_x)_{x \in S}$ such that $P_x\in \Gamma_0(x,M)$ for all $x \in S$, and $\norm{\Vec{P}}_{\JCh(S)} \leq M_0$. Then there exist $P_0 \in \Gamma_0(x_0,M_0)$ and $F_0 \in \Cs(Q_0)$ such that
    \begin{enumerate}[(A)]
        \item $\jet_{x}F_0 \in \Gamma_0(x,C(n,s,A_0)M_0)$ for all $E \cap Q_0$,
        \item $\abs{\grad^m(F_0-P_0)(x)} \leq \delta_{Q_0}^{s-m}\cdot C(n,s,A_0)M_0$ for all $x \in Q_0$ and $0 \leq m< s$, and in particular,
        \item $\abs{\grad^m F_0(x)} \leq C(n,s,A_0)M$ for all $x \in Q_0$ and $m = \floor{s}$.
    \end{enumerate}
\end{theorem}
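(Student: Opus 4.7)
The plan is to adapt the shape field finiteness principle machinery of Fefferman--Israel--Luli \cite{FIL16,FIL16+} to the H\"older--Zygmund scaling of $\Cs$. I would begin with a normalization reduction: by rescaling and homogeneity (replacing $F$ by $F/M_0$), I may assume $M_0 = 1$, and I may rescale $Q_0$ to have a convenient dyadic sidelength while still respecting $\delta_{Q_0} \leq \delta_0$. I would then introduce a chain of \emph{refined} shape fields $\Gamma_\ell(x,M)$, indexed by a nonincreasing sequence of subsets $\mathcal{A}_\ell$ of the finite set of multi-indices $\{\alpha : \abs{\alpha} \leq \floor{s}\}$, where $\Gamma_\ell(x,M)$ consists of those $P \in \Gamma_{\ell-1}(x, c M)$ that extend compatibly with controlled Whitney fields marked by the labels in $\mathcal{A}_\ell$. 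The key structural lemma to transfer from \cite{FIL16,FIL16+} is that Whitney convexity is preserved along this refinement, with $A_0$ inflated and $\delta_0$ deflated by controlled factors depending only on $n, s$. This bookkeeping is where I would spend most of the work, taking care that the H\"older exponents $s-m$ replace the $C^m$-scaling $\delta^m$ throughout.

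With the refinement chain in hand, the core argument is a Calder\'on--Zygmund stopping-time decomposition of $Q_0$: a dyadic sub-cube $Q \subset Q_0$ is declared ``OK'' if it admits a local Whitney field $\Vec{P}_Q = (P_x)_{x \in E \cap Q^*}$ lying in the fully-refined shape field $\Gamma_{\ell_{\max}}(x, C)$ with $\norm{\Vec{P}_Q}_{\JCh(E\cap Q^*)} \leq C$, where $Q^*$ is a suitable dilate of $Q$. The hypothesis that every subset of $E$ of size at most $k$ admits a controlled local Whitney field, combined with Whitney convexity through the refinement, forces enough cubes to be OK that the stopping-time decomposition terminates; here $k = k(n,s)$ is chosen large enough to accommodate the total number of refinement levels, bounded by $\dim\P$. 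On each OK cube, the Whitney extension theorem from Section \ref{sect:whitney} applied to $\Vec{P}_Q$ produces a local patch $F_Q \in \Cs(Q^*)$ with $\jet_x F_Q \in \Gamma_0(x, C)$ for $x \in E\cap Q$ and with $\norm{F_Q}_{\Cs(Q^*)} \leq C$.

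Next I would glue the patches $F_Q$ using a partition of unity $\set{\theta_Q}$ adapted to the CZ decomposition, in direct analogy with Section \ref{sect:whitney}. Setting $F_0 := \sum_Q \theta_Q F_Q$, the $\Cs(Q_0)$ derivative estimates follow from telescoping neighbor differences $F_Q - F_{Q'}$ on touching cubes, exactly as in the proof of Theorem \ref{thm:whitney-finite}; the polynomial $P_0$ in conclusion (B) is taken to be $\jet_{x_0}F_Q$ for the OK cube near $x_0$. For the shape-field conclusion $\jet_x F_0 \in \Gamma_0(x, CM_0)$ at $x \in E\cap Q_0$, I would invoke Whitney convexity: the jet $\jet_x F_0$ equals a finite convex combination $\sum_Q \theta_Q(x)\jet_x F_Q$ of elements of $\Gamma_0(x, C)$. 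A standard device replaces the partition functions (whose square roots are not polynomials) by polynomial approximations $Q_j$ with $\jet_x(\sum_j Q_j^2) \equiv 1$ and $\abs{\grad^m Q_j(x)} \leq \delta^{-m}$ at the CZ length-scale at $x$, and iterating the pairwise convexity hypothesis of Definition \ref{def:sf} pins the resulting jet inside $\Gamma_0(x, CM_0)$.

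The main obstacle I expect is verifying that the \emph{quantitative} CZ termination and label-refinement arguments from \cite{FIL16,FIL16+} go through with the H\"older exponent $s - \floor{s}$ in place of a general modulus of continuity. In particular, the ``Main Lemma'' of FIL bounds the number of refinement levels by a function of $\dim\P$ alone, but the induction requires that every estimate involving Taylor remainders and rescaling of test polynomials carry the factor $\delta^{s-m}$ uniformly. Once this accounting is in place, the local extension and patching steps are routine adaptations of the arguments already assembled for Theorem \ref{thm:whitney-finite}, and the three conclusions (A)--(C) follow from the shape-field membership of the jets, the Taylor comparison to $P_0$, and the $\Cs$-norm control of $F_0$ on $Q_0$, respectively.
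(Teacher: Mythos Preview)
The paper does not actually supply a proof of Theorem~\ref{thm:sf}; it simply states that the result is obtained ``by adapting the proof of the Finiteness Principle for shape fields from \cite{FIL16,FIL16+}'' to the $\Cs$ setting. Your proposal is a reasonable sketch of exactly that adaptation---label refinement of the shape field, Calder\'on--Zygmund stopping time, local Whitney patching, and iterated Whitney convexity to land the glued jets in $\Gamma_0$---so your approach is consistent with what the paper invokes, and in fact gives more detail than the paper does.
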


Let $ E\subset \Rn $ be a finite set and let $ f : E \to \pos $. For each $ x \in E $ and $ M \geq 0 $, we define
\begin{equation}
    \Gamma_f(x,M) = \set{P \in 
    \Gamma(x,M): P(x) = f(x)}.
    \label{eq:Gamma-def-f}
\end{equation}

\begin{lemma}\label{lem:Cd convex}
    Let $E\subset \Rn$ be a finite set and let $f : E \to \pos$. Then
    $(\Gamma_f(x,M))_{x\in E, M \geq 0}$ is a $(C,1)$-Whitney convex shape field for some $C = C(n,s)$.
\end{lemma}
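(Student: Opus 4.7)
The plan is to verify that $P := \mathcal{J}_{x_0}(Q_1^2 P_1 + Q_2^2 P_2)$ lies in $\Gamma_f(x_0, A_0 M)$ for a suitable $A_0 = A_0(n,s)$, which means both satisfying the interpolation condition $P(x_0) = f(x_0)$ and the two norm bounds defining $\Gamma(x_0, A_0 M)$. The interpolation is immediate from the data: evaluating $\mathcal{J}_{x_0}(Q_1^2 + Q_2^2) \equiv 1$ at $x_0$ yields $Q_1(x_0)^2 + Q_2(x_0)^2 = 1$, and since $P_1(x_0) = P_2(x_0) = f(x_0)$, this forces $P(x_0) = f(x_0) \geq 0$.

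For the derivative bounds I would first record the Leibniz consequence $|\nabla^\alpha(Q_j^2)(x_0)| \leq C(n,s)\,\delta^{-\alpha}$ for $0 \leq \alpha < s$, obtained from $|\nabla^\beta Q_j(x_0)| \leq \delta^{-\beta}$. To extract the jet-level cancellation built into $\mathcal{J}_{x_0}(Q_1^2 + Q_2^2) \equiv 1$, I would rewrite
$$P = P_2 + \mathcal{J}_{x_0}\bigl(Q_1^2(P_1 - P_2)\bigr),$$
and use Leibniz together with the hypothesis $|\nabla^\gamma(P_1 - P_2)(x_0)| \leq M\delta^{s-\gamma}$ to obtain
$$\bigl|\nabla^m P(x_0) - \nabla^m P_2(x_0)\bigr| \leq C(n,s)\, M\, \delta^{s-m}, \quad 0 \leq m < s.$$
Together with $|\nabla^m P_2(x_0)| \leq M$ and $\delta \leq 1$, this supplies the pointwise bound $|\nabla^m P(x_0)| \leq C(n,s)\, M$, which handles the first condition in the definition of $\Gamma$.

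For the flatness bound $|\nabla^m P(x_0)|^s \leq (A_0 M)^m P(x_0)^{s-m}$, where $P(x_0) = f(x_0)$, I would split into cases based on the size of $f(x_0)$ relative to $M\delta^s$. When $f(x_0) \geq M\delta^s$, the algebraic identity $M\delta^{s-m} = M^{m/s}(M\delta^s)^{(s-m)/s} \leq M^{m/s} f(x_0)^{(s-m)/s}$ converts the error term from the previous display into the correct flatness scale, and combining with the flatness of $P_2$ finishes this subcase. When $f(x_0) < M\delta^s$, the cancellation route is too weak, so I would abandon it and instead expand $\mathcal{J}_{x_0}(Q_j^2 P_j)$ directly via Leibniz, substituting the individual flatness bound $|\nabla^\gamma P_j(x_0)| \leq M^{\gamma/s} f(x_0)^{(s-\gamma)/s}$. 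Each term then factors as
$$\left(\frac{f(x_0)}{M\delta^s}\right)^{(m-\gamma)/s} M^{m/s} f(x_0)^{(s-m)/s} \leq M^{m/s} f(x_0)^{(s-m)/s},$$
the inequality holding precisely because $f(x_0)/(M\delta^s) < 1$ in this subcase.

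I expect the small-$f(x_0)$ case to be the main obstacle: the partition of unity identity $\mathcal{J}_{x_0}(Q_1^2 + Q_2^2) \equiv 1$ provides little leverage when $P_j(x_0)$ is dwarfed by the noise scale $M\delta^s$, forcing reliance on the flatness of the individual $P_j$ instead. Aggregating constants across the two cases gives $(A_0, \delta_0)$-Whitney convexity with $A_0 = A_0(n,s)$ and $\delta_0 = 1$, which is exactly the claim.
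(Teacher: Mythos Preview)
Your proof is correct and follows the same essential strategy as the paper: the identical case split on whether $f(x_0) \gtrless M\delta^s$, the rewriting $P = P_2 + \jet_{x_0}\bigl(Q_1^2(P_1-P_2)\bigr)$ to exploit the closeness hypothesis in the large-$f(x_0)$ regime, and the direct Leibniz expansion using the individual flatness of the $P_j$ in the small-$f(x_0)$ regime. The one difference is in packaging: the paper first passes to auxiliary functions $F_j := \T_{x_0}[P_j]$ via Lemma~\ref{lem:Whitney-single jet} and cutoffs $\theta_j$ with $\jet_{x_0}\theta_j \equiv Q_j$, builds $F = \theta_1^2 F_1 + \theta_2^2 F_2$, and then carries out the very same estimates. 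Since every estimate in the paper's argument is evaluated only at $x_0$, where the jets of $F_j$ and $\theta_j$ coincide with $P_j$ and $Q_j$, this detour through $\Fs$ functions buys nothing for the lemma as stated; your purely polynomial argument is the streamlined version of the same computation.
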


\begin{proof}
    It is clear that $\Gamma_f\brac{x,M}$ is convex and $\Gamma_f(x,M)\subset \Gamma_f(x,M')$ whenever $M' \geq M$. It remains to establish $(C,1)$-Whitney convexity.

    Let $0 < \delta \leq 1$, and let $x_0 \in E$. Suppose we are given $P_1, P_2 \in \Gamma_f(x_0,M)$ for some $x_0 \in E$ and $M \geq 0$ satisfying
    \begin{equation}
        \abs{\grad^m (P_1- P_1)(x_0)} \leq M\delta^{s-m}
        \text{ for all } 0 \leq m < s.
        \label{eq:sf-P1}
    \end{equation}
    Suppose we are also given $Q_1, Q_2 \in \P$ such that 
    \begin{equation}
        \abs{\grad^m Q_j (x)} \leq \delta^{-m}
        \text{ for all } 0 \leq m < s \text{ and } j= 1,2,
        \label{eq:sf-Q1}
    \end{equation}
    and moreover,
    \begin{equation}
        \jet_{x_0}\brac{Q_1^2 + Q_2^2} \equiv 1.
        \label{eq:sf-Q2}
    \end{equation}
    We set
    \begin{equation}
        P := \jet_{x_0}\brac{Q_1^2 P_1 + Q_2^2 P_2}
        \label{eq:sf-PQ}
    \end{equation}
    and we would like to show that $P \in \Gamma_f(x_0,C(n,s)M)$. In particular, we need to show that
    \begin{align}
        P(x_0) &= f(x_0), \label{eq:lem:Cd-convex-main-0}\\
        \max_{0 \leq m < s}\abs{\grad^m P(x_0)} &\leq C(n,s)M, \text{ and }
        \label{eq:lem:Cd-convex-main-1}\\
        \max_{0 \leq m < s}\brac{\frac{\abs{\grad^m P(x_0)}}{P(x_0)^{s-m}}}^{1/m} &\leq C(n,s)M.
        \label{eq:lem:Cd-convex-main-2}
    \end{align}

    Note that \eqref{eq:lem:Cd-convex-main-1} follows immediately from \eqref{eq:sf-Q2}, \eqref{eq:sf-PQ}, and the assumption that $P_1, P_2 \in \Gamma_f(x_0,M)$.

    Let $\T_{x_0}$ be as in Lemma \ref{lem:Whitney-single jet}, and we set $F_j:= \T_{x_0}[P_j]$ for $j = 1,2$. Then
    \begin{equation}
        F_j \in \Fs(\Rn), \,
        \norm{F_j}_{\Fs(\Rn)} \leq C(n,s)M, \text{ and }
        \jet_{x_0}F_j \equiv P_j
        \text{ for } j = 1,2.
    \end{equation}

    Thanks to \eqref{eq:sf-Q2}, we may assume (interchanging $Q_1$ and $Q_2$ if necessary) 
    \begin{equation}
        Q_1(x_0) \geq \frac{1}{\sqrt{2}}.
        \label{eq:sf-Q3}
    \end{equation}
    For sufficiently small $c_0 = c_0(n,s)$ (using a similar argument as in Lemma \ref{lem:locally constant}(A)), \eqref{eq:sf-Q1} and \eqref{eq:sf-Q3} yields
    \begin{equation}
        Q_1(x) \geq \frac{1}{10} \text{ for } x \in Q(x_0,c_0\delta),
        \label{eq:sf-Q4}
    \end{equation}
    a cube with center $x_0$ and $\delta_Q = c_0\delta$.
    Fix $c_0$ as in \eqref{eq:sf-Q4}. Let $\chi(x) := \phi_0(\frac{x-x_0}{c_0\delta})$ with $\phi_0 $ as in \eqref{eq:phi_0-def}. Then
    \begin{equation}
        0 \leq \chi \leq 1, \, \supp{\chi}\subset Q(x_0,c_0\delta),\, \jet_{x_0}\chi \equiv 1,\, \text{ and }
        \abs{\grad^m\chi} \leq C(n,x)\delta^{-m}
        \text{ for } 0 \leq m < s.
    \end{equation}
    Define $\phi_1:= \chi\cdot Q_1 + (1-\chi)$ and $\phi_2 = \chi \cdot Q_2$, and $\theta_j:= \frac{\phi_j}{\sqrt{\phi_1^2 + \phi_2^2}}$. Then $\theta_1$ and $\theta_2$ enjoy the following properties.
    \begin{enumerate}[($\theta$-1)]
        \item $\theta_j \in \Cs(\Rn)$ and $\abs{\grad^m\theta_j} \leq C(n,s)\delta^{-m}$ for $j = 1,2$ and $0 \leq m < s$;
        \item $\theta_1^2 + \theta_2^2 = 1$ on $\Rn$, and $\supp{\theta_2} \subset Q(x_0,c_0\delta)$; 
        \item $\jet_{x_0}\theta_j = Q_j$ for $j = 1,2$.
    \end{enumerate}
    We define a nonnegative function $F \in \Cs(\Rn)$ by
    \begin{equation}
        F:= \theta_1^2 F_1 + \theta_2^2F_2 = F_1 + \theta_2^2(F_2-F_1).
        \label{eq:sf-F-def}
    \end{equation}
    We want to show that $F\in \Fs(\Rn)$ with $\norm{F}_{\Fs(\Rn)} \leq C(n,s)M$, $F(x_0) = f(x_0)$, and $\jet_{x_0}F \equiv P$. Note that the latter two are clear from construction, so it suffices to estimate $\norm{F}_{\Fs(\Rn)}$.

    To simplify notation, we write
    \begin{equation*}
        g := \theta_2^2.
    \end{equation*}

    Thanks to \eqref{eq:sf-PQ}, we have $\jet_{x_0}(F_2 - F_1) = P_2 - P_1$. In view of \eqref{eq:sf-P1}, 
    \begin{equation}
        \abs{\grad^m (F_2- F_1)(x_0)}\leq \delta^{s-m}\cdot C(n,s)M
        \text{ for every } 0 \leq m < s.
        \label{eq:sf-Cm-F2-F1}
    \end{equation}
    In view of ($\theta$-1), we see that
    \begin{equation}
\abs{\grad^m\brac{g(F_2 - F_1)}(x_0)} \leq \delta^{s-m}\cdot C(n,s)M
        \text{ for every } 
        0 \leq m < s.
        \label{eq:sf-Cm-est1}
    \end{equation}
    The estimate \eqref{eq:lem:Cd-convex-main-1} follows from \eqref{eq:sf-F-def}, \eqref{eq:sf-Cm-est1}, and the assumption that $\delta < 1$.

Next, we show \eqref{eq:lem:Cd-convex-main-2}.

Suppose $P(x_0) = F(x_0) \geq \delta^s M$. Differentiating \eqref{eq:sf-F-def}, we see that, for $0 \leq m < s$,
\begin{equation}
    \grad^mP(x_0) = \grad^mF(x_0) = \grad^m F_1(x_0) + \sum_{k+l = m,\, k \geq 1}C(k,l)\grad^kg(x_0)\otimes \grad^l(F_2-F_1)(x_0).
    \label{eq:sf-derivatives}
\end{equation}

In view of Lemma \ref{lem:Whitney-single jet}, ($\theta$-1), \eqref{eq:sf-Cm-F2-F1}, and \eqref{eq:sf-derivatives}, we see that
\begin{equation*}
   \begin{split}
        \brac{\frac{\abs{\grad^m P(x_0)}^s}{P(x_0)^{s-m}}}^{1/m}
        &\leq \norm{F_1}_{\Fs(\Rn)} + \brac{\delta^{-ks}\delta^{(s-l)s}\delta^{-(s-m)s}}^{1/m}\cdot C(n,s)M \leq C(n,s)M.
   \end{split}
\end{equation*}

Suppose $F(x_0) < \delta^sM$. Then $F_j(x_0) < \delta^sM$ for $j= 1,2$. Thanks to Lemma \ref{lem:Whitney-single jet} and ($\theta$-1), we have
\begin{equation}
    \begin{split}
    \brac{\frac{\abs{\grad^kg(x_0)\otimes \grad^lF_j(x_0)}^s}{g(x_0)^{(s-m)}F_j(x_0)^{s-m}}}^{1/m}
        &\leq
        C(n,s)\brac{
        g(x_0)^lF_j(x_0)^k
        \frac{\abs{\grad^kg(x_0)}^s}{g(x_0)^{s-m}}
        \frac{\abs{\grad^lF_j(x_0)}^s}{F_j(x_0)^{s-l}}
        }^{1/m}\\
        &\leq C(n,s)g(x_0)^{l/m}F_j(x_0)^{k/m}\norm{g}_{\Fsh(\Rn)}^{k/m}
        \norm{F_j}_{\Fsh(\Rn)}^{l/m}\\
        &\leq \delta^{k/m}\delta^{-k/m}\cdot C(n,s)M = C(n,s)M. 
    \end{split}
    \label{eq:sf-Fsmall-1}
\end{equation}
In view of the identity \eqref{eq:sf-F-def} and \eqref{eq:sf-Fsmall-1}, we see that
\eqref{eq:lem:Cd-convex-main-2} holds. 

\end{proof}

\subsection{Proof of Theorem \ref{thm:fp-arbitrary}}

First, we prove the finite-set version of Theorem \ref{thm:fp-arbitrary}. Recall that, given $S \subset \Rn$ and $f : S \to \pos$, 
\begin{equation*}
    \norm{f}_{\Fs(S)}:= \inf\set{\norm{F}_{\Fs(\Rn)}: F \in \Fs(\Rn) \text{ and } F = f \text{ on }S}.
\end{equation*}

\begin{theorem}[Finiteness Principle for $\Fs$, finite set version]\label{thm:fp-finite}
    There exist constants $k^\sharp = k^\sharp(n,s)$ and $C^\sharp = C^\sharp(n,s)$ such that the following holds. Given any finite subset $E\subset \Rn$ and $f : E \to \pos$,
    \begin{equation}
        \max_{S\subset E,\, \#S \leq k^\sharp }\norm{f}_{\Fs(S)}\leq \norm{f}_{\Fs(E)} \leq C^\sharp \cdot\max_{S\subset E,\, \#S \leq k^\sharp }\norm{f}_{\Fs(S)}.
    \end{equation}
\end{theorem}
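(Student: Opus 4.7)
The first inequality $\max_{S\subset E,\,\#S\leq k^\sharp}\norm{f}_{\Fs(S)} \leq \norm{f}_{\Fs(E)}$ is immediate from the definition of $\norm{\cdot}_{\Fs(S)}$. The substantive direction is the reverse. Set $M := \max_{S\subset E,\,\#S \leq k^\sharp}\norm{f}_{\Fs(S)}$, with $k^\sharp$ taken to be the constant $k$ supplied by Theorem \ref{thm:sf}. My plan is to feed the shape field $\Gamma_f$ from \eqref{eq:Gamma-def-f} into the shape field finiteness principle to produce a single $\Cs(\Rn)$ function whose jets on $E$ respect the local flatness constraints, and then to apply the Whitney extension operator $\T_E$ of Theorem \ref{thm:whitney-finite} to these jets to recover a genuine $\Fs$ extension.

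To verify the hypothesis of Theorem \ref{thm:sf} with shape field $\Gamma_f$: by Lemma \ref{lem:Cd convex}, $\Gamma_f$ is $(C_0(n,s),1)$-Whitney convex. For each $S \subset E$ with $\#S \leq k^\sharp$, the definition of $M$ produces $F_S \in \Fs(\Rn)$ with $F_S|_S = f|_S$ and $\norm{F_S}_{\Fs(\Rn)} \leq 2M$. The jet field $\Vec{P}^S := (\jet_x F_S)_{x\in S}$ has each jet lying in $\Gamma_f(x, C(n,s) M)$ (directly from the definitions of $\Gamma$, $\Fsh$, and $\Gamma_f$, together with $F_S(x)=f(x)$ on $S$) and, by Taylor's theorem (Theorem \ref{thm:Taylor}), satisfies $\norm{\Vec{P}^S}_{\JCh(S)} \leq C(n,s) M$. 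Thus the hypothesis of Theorem \ref{thm:sf} holds on every $k^\sharp$-subset with constant $C(n,s) M$.

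Next I would tile $\Rn$ by unit cubes $\{Q_i\}$ (so each $\delta_{Q_i} = 1 \leq \delta_0 = 1$) and apply Theorem \ref{thm:sf} on every cube $Q_i$ meeting $E$, obtaining local $\tilde F_i \in \Cs(Q_i)$ with $\jet_x \tilde F_i \in \Gamma_f(x, C(n,s) M)$ for every $x \in E \cap Q_i$ and with $\Cs(Q_i)$-norm bounded by $C(n,s) M$ (via conclusions (B) and (C) of Theorem \ref{thm:sf} combined with the bounds on $P_0 \in \Gamma_f(x_0,CM)$). A standard $\Cs$ partition of unity, supported on a slight enlargement of the tiling, patches these local pieces into a single $\tilde F \in \Cs(\Rn)$ with $\norm{\tilde F}_{\Cs(\Rn)} \leq C(n,s)M$ and $\jet_x \tilde F \in \Gamma_f(x, C(n,s) M)$ for every $x \in E$, where the $\Gamma_f$-membership of the patched jets is preserved because at each $x \in E$ the partition of unity is a convex combination supported in cubes whose local jets already lie in $\Gamma_f(x,CM)$.

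Finally, set $\Vec{P} := (\jet_x \tilde F)_{x\in E}$. Membership $\jet_x \tilde F \in \Gamma_f(x, CM)$ encodes simultaneously that $P_x(x)=f(x) \geq 0$, the pointwise bound $|\grad^m P_x(x)| \leq CM$ for $0 \leq m < s$, and the flatness inequality yielding $\norm{\Vec{P}}_{\JFh(E)} \leq CM$; the pairwise Whitney compatibility $|\grad^m(P_x - P_y)(y)| \leq CM\,|x-y|^{s-m}$ follows from the Taylor remainder estimate applied to $\tilde F \in \Cs(\Rn)$. Hence $\Vec{P} \in \JF(E)$ with $\norm{\Vec{P}}_{\JF(E)} \leq CM$, and $F := \T_E[\Vec{P}]$ given by Theorem \ref{thm:whitney-finite} is nonnegative, lies in $\Fs(\Rn)$ with $\norm{F}_{\Fs(\Rn)} \leq C(n,s) M$, and satisfies $F(x) = P_x(x) = f(x)$ on $E$. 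The step I expect to require the most care is the patching in the previous paragraph: the shape field machinery is intrinsically local (Theorem \ref{thm:sf} only controls a single cube of sidelength $\leq \delta_0$), and one must verify that after gluing the local outputs the global jet field retains its $\Gamma_f$-membership and Whitney compatibility with constants independent of $E$ and of the tiling.
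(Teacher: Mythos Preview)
Your proposal follows the same strategy as the paper: verify via Lemma \ref{lem:Cd convex} that $\Gamma_f$ is a Whitney-convex shape field, invoke Theorem \ref{thm:sf} to produce a $\Cs$ function whose jets on $E$ lie in $\Gamma_f(\cdot, CM)$, and then post-process those jets through $\T_E$ (Theorem \ref{thm:whitney-finite}) to obtain the $\Fs$ extension. The only organizational difference is that the paper first reduces to $E\subset \tfrac12 Q_0$ (relegating the unit-cube patching to a one-line remark) while you tile up front and patch the local $\Cs$ outputs before applying $\T_E$; either ordering is fine.

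One caution on your patching justification: the claim that $\jet_x \tilde F \in \Gamma_f(x,CM)$ ``because at each $x\in E$ the partition of unity is a convex combination'' is not literally valid, since $\jet_x\bigl(\sum_i \theta_i \tilde F_i\bigr)$ involves derivatives of the $\theta_i$ and is \emph{not} the pointwise convex combination $\sum_i \theta_i(x)\,\jet_x \tilde F_i$. The conclusion nonetheless holds, but the correct argument is either to invoke Whitney convexity (Lemma \ref{lem:Cd convex}) at scale $\delta=1$, or to check flatness directly using $|\grad^m \tilde F_i(x)| \le (CM)^{m/s} f(x)^{(s-m)/s}$ together with $|\grad^k\theta_i|\le C$ at unit scale and the bound $f(x)\le M$ (which comes from $\#\{x\}=1\le k^\sharp$). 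You already flagged this step as the one requiring the most care, and indeed it does.
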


\begin{proof}
    The first inequality is immediate. We shall prove the second. We shall prove the case when $E\subset 0.5Q_0$ where $\delta_{Q_0} = 1$. The general case follows immediately if we apply a partition of unity adapted to a tiling of $\Rn$ by unit cubes. 
    Set
    \begin{equation*}
        M_0 := 2\max_{S\subset E,\, \#S \leq k^\sharp }\norm{f}_{\Fs(S)} < \infty.
    \end{equation*}
    It suffices to exhibit a function $F \in \Fs(\Rn)$ with $F = f$ on $E$ and $\norm{F}_{\Fs(\Rn)} \leq C(n,s)M_0$.

    Recall from \eqref{eq:Gamma-def-f} that 
    \begin{equation*}
        \Gamma_f(x,M)= \set{P \in \Gamma(x,M) : P(x) = f(x)}
        \text{ for each } x \in E.
    \end{equation*}
    
    By the definition of $\norm{f}_{\Fs(S)}$, given any $S \subset E$ with $\#S \leq k^\sharp$, there exists $F_S \in \Fs(\Rn)$ such that $F_S = f$ on $S$ and $\norm{F_S}_{\Fs(\Rn)} \leq M_0$. Let $\Vec{P}_S := \jet_SF_S = (\jet_xF_S)_{x \in S}$. 
    By Taylor's theorem \ref{thm:Taylor},
    \begin{equation*}
        \jet_xF_S \in \Gamma_f(x,M_0)
        \text{ for each } x\in S
        \text{ and }
        \norm{\Vec{P}_S}_{\JF(S)} \leq C(n,s)M_0.
    \end{equation*}
    Therefore, our present hypothesis supplies the Whitney field $\vec{P}_S$ required in the hypothesis of Theorem \ref{thm:sf}. Hence, rescaling Lemma \ref{lem:Cd convex} and applying Theorem \ref{thm:sf}, we obtain $P_0 \in \Gamma_f(x_0,C(n,s)M_0)$ and $F_0 \in \Cs(Q_0)$ such that 
    \begin{align}
        &\jet_xF_0 \in \Gamma_f(x,C(n,s)M_0)
        \text{ for all } x \in E, \text{ and }
        \label{eq:fp-F in Gamma}
        \\
        &\norm{\grad^m(F-P_0)}_{L^\infty(Q_0)} \leq C(n,s)M
        \text{ for } 0 \leq m < s.
    \end{align}
    In view of the definitions of $\Gamma$ and $\Gamma_f$ in \eqref{eq:Gamma-def} and \eqref{eq:Gamma-def-f}, we see that
    \begin{equation}
        \norm{F_0}_{\Cs(Q_0)} \leq C(n,s)M_0
        \text{ for } 0 \leq m < s.
        \label{eq:fp-F Cs estimate}
    \end{equation}

    Note that $F_0$ need not be in $\Fs$, but we will only use the jets of $F_0$. 

    Let $\T_E$ be as in Theorem \ref{thm:whitney-finite}. We define
    \begin{equation}
        F:= \T_E\circ \jet_E F_0.
    \end{equation}
    Thanks to Taylor's theorem, Theorem \ref{thm:whitney-finite}, \eqref{eq:fp-F in Gamma}, and \eqref{eq:fp-F Cs estimate} we see that $F \in \Fs(\Rn)$, $\jet_x F = \jet_xF_0 \in \Gamma_f(x,C(n,s)M)$ for every $x \in E$, and $\norm{F}_{\Fs(\Rn)} \leq C(n,s)M_0$. In particular, $F(x) = f(x)$ for every $x \in E$. Therefore, $\norm{f}_{\Fs(E)} \leq C(n,s)M_0$.
\end{proof}

Next, we use a compactness argument to pass from finite $E\subset \Rn$ to arbitrary $E\subset \Rn$.

\begin{proof}[Proof of Theorem \ref{thm:fp-arbitrary}]

    Without loss of generality, we may assume that $E\subset \frac{1}{2}Q_0$ with $\delta_{Q_0} = 1$. It suffices to show the second inequality in \eqref{eq:fp-closed-quant}. Set $M_0 := 2 \sup\limits_{S\subset E,\, \#S \leq k^\sharp }\norm{f}_{\Fs(S)}$. It suffices to exhibit a function $F \in \Fs(\Rn)$ with $F = f$ on $E$ and $\norm{F}_{\Fs(\Rn)} \leq C(n,s)M_0$.

    By Ascoli's theorem,
    \begin{equation*}
        \mathcal{B}:= \set{F \in \Fs(Q_0) : \norm{F}_{\Fs(Q_0)} \leq C(n,s)M_0}
    \end{equation*}
    is compact in the non-H\"older $C^{\floor{s}}(Q_0)$ norm topology 
    \footnote{given by $\norm{F}_{C^{\floor{s}}(Q_0)}:= \max\limits_{0 \leq m < s}\sup\limits_{x\in Q_0}\abs{\grad^m F(x)}$}. For each finite $E_0 \subset E$, Theorem \ref{thm:fp-finite} tells us that there exists $F_{E_0} \in \mathcal{B}$ such that $F_{E_0} = f$ on $E_0$. Consequently, there exists $F \in \mathcal{B}$ such that $F = f$ on $E$. 
    
\end{proof}

\appendix

\section{Proof of Theorem \ref{thm:root}}
 \label{section:thm:root}

% \begin{lemma}
%     Let $\Omega\subset \Rn$ be convex and let $F \in \Fs(\Omega)$. There exists a sufficiently small constant $c = c(n,s)$ such that given any $F \in \Fs(\Omega)$ and $\delta \leq c\brac{\frac{\abs{F(x)}}{\norm{F}_{\Fs(\Omega)}}}^{1/s}$, we have
%     \begin{equation}
%         \abs{F(y)-F(x)} \leq \frac{1}{2}\abs{F(x)}
%         \text{ for all } \abs{x-y} \leq \delta.
%     \end{equation}
% \end{lemma}

% \begin{proof}
%     This is a direct application of Taylor's theorem. See Lemma 1 of \cite{RSH17}.
% \end{proof}

 Given a multi-index $\alpha = (\alpha_1, \cdots, \alpha_n) \in \mathbb{N}_0^n$, let $\Pi(\alpha)$ denotes all the possible partition of $\alpha$, i.e.,
 \begin{equation*}
     \Pi(\alpha) = \set{\pi = \set{\beta_1, \cdots, \beta_k} : k \in \mathbb{N}, \beta_j \in \mathbb{N}_0^n \text{ for all }j, \text{ and }\sum_{j=1}^k\beta_j = \alpha}.
 \end{equation*}

 \begin{theorem}[Multivariate Fa\'a di Bruno's Formula, \cite{Bruno-Hardy}]
 \label{thm:FdB}
Let $I\subset \Rn$ be an interval. Let $h : I \to \R$ and $F: \Rn \to I $ be sufficiently smooth. Then
\begin{equation}
    \da(h\circ F) = \sum_{\pi \in \Pi(\alpha)} \brac{h^{(\abs{\pi})} \circ F} \cdot \prod_{\beta \in \pi}\d^\beta F.
    \label{eq:FdB}
\end{equation}
Here, $\abs{\pi}$ is the cardinality of $\pi$ and $h^{(k)}$ is the $k$-th derivative of $h$.
\end{theorem}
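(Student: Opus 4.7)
The plan is to prove the formula by induction on $|\alpha|$. For the base case $|\alpha|=1$, so $\alpha = e_j$ for some standard basis vector $e_j$, the only element of $\Pi(e_j)$ is the singleton partition $\{e_j\}$, and the right-hand side reads $(h'\circ F)\cdot \partial^{e_j}F$, which is just the ordinary chain rule. This establishes the identity for first-order derivatives.

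For the inductive step, assume \eqref{eq:FdB} holds for a given multi-index $\alpha$, and apply $\partial^{e_j}$ to both sides. On the left we obtain $\partial^{\alpha+e_j}(h\circ F)$. On the right, the product rule distributes $\partial^{e_j}$ across the factors $(h^{(|\pi|)}\circ F)\cdot \prod_{\beta\in\pi}\partial^\beta F$ in two ways:
\begin{enumerate}[(i)]
\item $\partial^{e_j}$ falls on $h^{(|\pi|)}\circ F$: using the chain rule this produces $(h^{(|\pi|+1)}\circ F)\cdot \partial^{e_j}F \cdot \prod_{\beta\in\pi}\partial^\beta F$, which corresponds to refining $\pi$ by adjoining a new block $\{e_j\}$, thereby producing a partition of $\alpha+e_j$ of cardinality $|\pi|+1$.
\item $\partial^{e_j}$ falls on some factor $\partial^\beta F$ with $\beta \in \pi$: this replaces that factor by $\partial^{\beta+e_j}F$, corresponding to enlarging one of the blocks of $\pi$ by $e_j$, which again yields a partition of $\alpha+e_j$, this time of the same cardinality as $\pi$.
\end{enumerate}

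The main (and essentially only) obstacle is the combinatorial verification that, as $\pi$ ranges over $\Pi(\alpha)$ and the two operations (i)–(ii) are applied, each partition $\pi' \in \Pi(\alpha+e_j)$ arises with the correct total multiplicity dictated by the convention underlying the definition of $\Pi$ in the excerpt. Concretely, given $\pi' \in \Pi(\alpha+e_j)$, one identifies the block of $\pi'$ containing the ``added'' $e_j$: either it is a singleton $\{e_j\}$, in which case $\pi'$ comes from operation (i) applied to $\pi = \pi'\setminus\{e_j\}$, or that block has the form $\beta+e_j$ with $\beta \ne 0$, in which case $\pi'$ arises from operation (ii) applied to the partition obtained by replacing $\beta+e_j$ by $\beta$ in $\pi'$. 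This bijective (or surjective-with-correct-multiplicity) correspondence matches the summands produced on the right-hand side with those prescribed by $\Pi(\alpha+e_j)$, completing the induction.

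Since this is the classical multivariate Faà di Bruno identity, I would prefer to simply invoke the reference \cite{Bruno-Hardy} in the final write-up, but the inductive argument above is self-contained and can be supplied in a few lines if desired. No flatness or positivity of $F$ is used here; the statement is purely an identity of smooth calculus and will later be combined with the estimates on $\norm{F}_{\Fsh}$ to control each summand $\prod_{\beta \in \pi}\partial^\beta F$ in terms of $F$ and $\norm{F}_{\Fs}$ when applied to $h(t)=t^r$ in the proof of Theorem \ref{thm:root}.
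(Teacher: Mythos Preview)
The paper does not prove this theorem at all; it simply states it with attribution to \cite{Bruno-Hardy} and uses it as a black box in the proof of Lemma~\ref{lem:key-flat}. Your instinct to ``simply invoke the reference'' is exactly what the paper does, so there is nothing to compare. Your inductive sketch is the standard argument and is fine as an optional self-contained supplement; the only point worth tightening, should you include it, is the multiplicity bookkeeping you flag yourself---the paper's $\Pi(\alpha)$ is written with set-braces, and for the coefficient-free formula \eqref{eq:FdB} to hold literally one must interpret the partitions as set partitions of $|\alpha|$ labeled differentiation slots (as in Hardy's combinatorial form), not merely as multisets of multi-indices.
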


% \begin{theorem}[Multivariate Fa\'a di Bruno Formula, \cite{Bruno-Hardy}]
% Let $h : \R \to \R$ and $F = F(t_1, \cdots, t_m)$ be sufficiently smooth, where $t_1, \cdots, t_m$ are not necessarily distinguishable. 
% \begin{equation}
%     \frac{\d^m }{\d t_1 \cdots \d t_m}(h\circ F) = \sum_{\pi \in \Pi} \brac{h^{(\abs{\pi})} \circ F} \cdot \prod_{B \in \pi} \frac{\d^{\abs{B}}F}{\prod_{j \in B}\d t_j}.
% \end{equation}
% Here,
% \begin{itemize}
%     \item $\Pi$ is all possible partition of the set $\set{1,\cdots, m}$,
%     \item $B\in \pi$ means that $B$ is a block of the partition $\pi$, and
%     \item $\abs{X}$ is the cardinality of a finite set $X$.
% \end{itemize}
% \end{theorem}

We are mainly interested in Theorem \ref{thm:FdB} for case $h(t) = t^r$ for some $r \in (0,1)$. Thus, $h^{(k)} = C(k,r)t^{r-k}$ for some $C(k,r) \neq  0$.

\begin{lemma}\label{lem:key-flat}
    Given $r \in (0,1)$, $s > 0$, $\Omega \subset \Rn$ convex, $F \in \Fs(\Omega)$, and $x,y \in \Omega$, we have
    \begin{equation}
        \abs{\grad^m F^r(x)} \leq C(n,r,s)\norm{F}_{\Fs(\Omega)}^{m/s}F(x)^{r-m/s}
        \text{ for all $0 \leq m < s$,}
        \label{eq:lem:key-flat-1}
    \end{equation}
    and
    \begin{equation}
        \abs{\grad^{\floor{s}}F^r(x) - \grad^{\floor{s}}F^r(y)} \leq C(n,r,s) \frac{
        \brac{\norm{F}_{\Csh(\Omega)} + \norm{F}_{\Fsh(\Omega)}}
        }{
        \min\set{F(x)^{1-r}, F(y)^{1-r}}
        }
        \abs{x-y}^{s-\floor{s}}.
        \label{eq:lem:key-flat-2}
    \end{equation}
    Moreover, if $F \geq \epsilon > 0$, then
    \begin{equation}
        \norm{F^r}_{\Fs(\Omega)} \leq C(n,r,s)\epsilon^{r-1}\norm{F}_{\Fs(\Omega)}.
        \label{eq:lem:key-flat-3}
    \end{equation}
\end{lemma}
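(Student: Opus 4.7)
}

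The engine of the whole lemma is Theorem \ref{thm:FdB} applied to $h(t)=t^r$, which gives, for each multi-index $\alpha$,
\begin{equation*}
    \partial^\alpha F^r = \sum_{\pi\in\Pi(\alpha)} C(|\pi|,r)\, F^{r-|\pi|}\prod_{\beta\in\pi}\partial^\beta F.
\end{equation*}
For \eqref{eq:lem:key-flat-1}, I would fix $|\alpha|=m<s$ and estimate each factor in a partition term by the flatness inequality built into the norm: for any $1\le |\beta|<s$,
\begin{equation*}
    |\partial^\beta F(x)|\le C(n,s)\,\|F\|_{\Fsh(\Omega)}^{|\beta|/s}\,F(x)^{(s-|\beta|)/s}.
\end{equation*}
Since in a partition $\pi=\{\beta_1,\dots,\beta_k\}$ of $\alpha$ we have $\sum|\beta_j|=m$ and $|\beta_j|\ge 1$, multiplying the factor estimates yields $\prod_j|\partial^{\beta_j}F|\le C\,\|F\|_{\Fsh}^{m/s}F(x)^{k-m/s}$, which combines with $F(x)^{r-k}$ to give exactly $\|F\|_{\Fs}^{m/s}F(x)^{r-m/s}$. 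The partition set $\Pi(\alpha)$ is finite with cardinality depending only on $m$ and $n$, so summation is harmless.

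For the H\"older bound \eqref{eq:lem:key-flat-2}, write $M=\floor{s}$ and apply Fa\`a di Bruno at both $x$ and $y$. My plan is to dichotomize according to the flatness length scale of Lemma~\ref{lem:locally constant}: set
\begin{equation*}
    \rho(z):= \brac{\tfrac{F(z)}{\|F\|_{\Csh(\Omega)}+\|F\|_{\Fsh(\Omega)}}}^{1/s}.
\end{equation*}
In the \emph{near} regime $|x-y|\le c_0\min(\rho(x),\rho(y))$, Lemma \ref{lem:locally constant}(B) makes $F$ uniformly comparable to $F(x)\sim F(y)\sim \min(F(x),F(y))$ on the segment $[x,y]$, so the function is bounded away from $0$ there; on this segment I run the standard telescoping identity $\prod a_i-\prod b_i = \sum(\cdots)(a_i-b_i)(\cdots)$ inside each partition term, using the trivial H\"older/Lipschitz estimates on $\partial^\beta F$ (only $|\beta|=M$ yields a true $|x-y|^{s-M}$, lower-order $|\beta|$ give $|x-y|^1$ times one extra derivative via Taylor) and the mean-value bound $|F(x)^{r-k}-F(y)^{r-k}|\le C\min(F(x),F(y))^{r-k-1}|F(x)-F(y)|$. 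The powers of $F$ and $\|F\|_{\Fs}$ balance because the exponents in a partition sum to $M$. In the \emph{far} regime $|x-y|>c_0\min(\rho(x),\rho(y))$, I just use the triangle inequality and bound each side by \eqref{eq:lem:key-flat-1}; then the hypothesis on $|x-y|$ converts the factor $\|F\|_{\Fs}^{M/s}$ into the required $\big(\|F\|_{\Csh}+\|F\|_{\Fsh}\big)|x-y|^{s-M}/\min(F(x),F(y))^{1-r}$ after a bookkeeping check of exponents.

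Estimate \eqref{eq:lem:key-flat-3} is then cheap: under $F\ge\epsilon$, every occurrence of $F(x)^{r-k}$ or $\min(F(x),F(y))^{r-k-1}$ in the analysis above is bounded in absolute value by a constant times $\epsilon^{r-1}$ (using $r<1$ and $k\ge 1$, $r-k<0$), so the pointwise and H\"older estimates for $F^r$ collapse to $C(n,r,s)\epsilon^{r-1}\|F\|_{\Fs(\Omega)}$ plus the trivial $L^\infty$ bound $F^r\le \|F\|_{L^\infty}^{r}\le \|F\|_{\Fs}^r$, absorbed into the same constant.

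The only genuinely delicate point is the exponent bookkeeping in the H\"older estimate, specifically reconciling the partition-sum identity $\sum|\beta_j|=M$ with the mixed use of Lipschitz bounds on the interior factors and the H\"older bound on the single factor of order $M$ (when it exists), while simultaneously tracking the correct power of $F$ at $x$ versus $y$. I expect the far-regime bookkeeping to be the main obstacle: one must verify that the transition $|x-y|\sim\rho(\cdot)$ really does cancel the gap $(M-s)/s$ in powers of $\|F\|_{\Fs}$ and produces a $\min(F(x),F(y))^{r-1}$ rather than a $\max$.
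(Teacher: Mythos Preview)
Your plan matches the paper's proof almost exactly: Fa\`a di Bruno with $h(t)=t^r$, the flatness bound $|\partial^\beta F|\le C\|F\|_{\Fsh}^{|\beta|/s}F^{(s-|\beta|)/s}$ multiplied over a partition, a near/far dichotomy at the scale of Lemma~\ref{lem:locally constant}, telescoping in the near regime, and the triangle inequality plus \eqref{eq:lem:key-flat-1} in the far regime.

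The one point worth flagging is your choice of threshold. You dichotomize at $|x-y|\le c_0\min(\rho(x),\rho(y))$, whereas the paper (after assuming $F(y)\le F(x)$) dichotomizes at $|x-y|\le c_0\rho(x)=c_0\max(\rho(x),\rho(y))$. With your $\min$ threshold, the far regime only gives $F(y)\lesssim\|F\|\,|x-y|^s$ directly, not the same for $F(x)$; so bounding $|\nabla^M F^r(x)|$ by \eqref{eq:lem:key-flat-1} does not immediately produce the factor $|x-y|^{s-M}/F(y)^{1-r}$. This is exactly the ``$\min$ versus $\max$'' worry you raise at the end. The fix is painless: either switch the threshold to $\max$, which forces both $F(x)$ and $F(y)$ to be $\lesssim\|F\||x-y|^s$ in the far case and makes the bookkeeping trivial (this is what the paper does), or observe that in the residual sub-case $c_0\rho(y)<|x-y|\le c_0\rho(x)$ Lemma~\ref{lem:locally constant} applied at $x$ already forces $F(x)\sim F(y)$, so you can freely swap $F(x)$ for $F(y)$ in the $x$-term estimate. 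Either way the argument closes with no new ideas.
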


\begin{proof}
\newcommand{\db}{\d^\beta}
    We adapt the proof of Lemma 2 in \cite{RSH17} to the combinatorial form of the multivariate Fa\'a di Bruno's formula.

    If $0 < s \leq 1$, then \eqref{eq:lem:key-flat-1} is trivial and \eqref{eq:lem:key-flat-2} follows from the mean value theorem. From now on, we assume $s > 1$, and we set 
    \begin{equation*}
        m_0:= \floor{s}.
    \end{equation*}

    Without loss of generality, we may assume $F(y)\leq F(x)$ and $\norm{F}_\Csh + \norm{F}_\Fsh = 1$.

    First of all, by the definition of $\norm{F}_\Fsh$,
    \begin{equation}
        \abs{\prod_{\beta \in \pi}\db F(x)} \leq 
        \prod_{\beta \in \pi} F(x)^{\frac{s-\abs{\beta}}{s}} = F(x)^{\abs{\pi}-\frac{m_0}{s}}.
        \label{eq:lem:key-flat-prod}
    \end{equation}

    Applying Fa\'a di Bruno's Formula \eqref{eq:FdB} and \eqref{eq:lem:key-flat-prod}, we see that
    \begin{equation}
        \abs{\da F^r(x)} \leq C(n,r,s) \sum_{\pi \in \Pi(\alpha)} F(x)^{r-\abs{\pi}} F(x)^{\abs{\pi} - \frac{m_0}{s}} \leq C(n,r,s)F(x)^{r-\frac{m_0}{s}}.
        \label{eq:case2-1}
    \end{equation}

     We see that \eqref{eq:lem:key-flat-1} follows from \eqref{eq:case2-1}.

     For the rest of the proof, we fix an arbitrary multi-index $\alpha$ with $\abs{\alpha} = m_0$ and a partition $\pi$ of $\alpha$. Note that the following arguments hold for $\Tilde{\alpha}$ with $\abs{\Tilde{\alpha}} < m_0 $ after suitable simplification. 

    \paragraph{Case I: suppose $\abs{x-y}\leq c_0F(x)^{1/s}$} with $c_0$ as in Lemma \ref{lem:locally constant} with $\epsilon = 1/2$.

    Thus, $F(y)/2 \leq F(x) \leq 3F(y)/2$, which we will use freely throughout rest of the proof. 

    By the triangle inequality,
    \begin{equation}
        \begin{split}
            &\abs{F(x)^{r-\abs{\pi}}
            \prod_{\beta \in \pi}
            \db F(x)
            -
            F(y)^{r-\abs{\pi}}
            \prod_{\beta \in \pi}
            \db F(y)
            }
            \\
            &\quad\quad
            \leq
            \abs{
            \brac{F(x)^{r-\abs{\pi}} - F(y)^{r-\abs{\pi}}}
            \prod_{\beta\in\pi}\db F(x)
            }
            + 
            \abs{
            F(y)^{r-\abs{\pi}}
            \brac{
            \prod_{\beta \in \pi}\db F(x) - \prod_{\beta\in\pi} \db F(y)
            }
            }\\
            &=: M_1 + M_2.
            \label{eq:M1+M2}
        \end{split}
    \end{equation}

    Let $u = \frac{y-x}{\abs{y-x}}$. By the mean value theorem applied to $F$ along the $u$-direction, there exists a point $\xi = \xi_{x,y,\alpha}$ on the open segment connecting $x$ and $y$ such that
    \begin{equation}
        \begin{split}
            &\abs{F(x)^{r-\abs{\pi}} - F(y)^{r-\abs{\pi}}}
            = \abs{F(x)^{r-\abs{\pi}} - F(y)^{r-\abs{\pi}}}^{(s-m_0) + (1-(s-m_0))}\\
            &\leq \abs{(r-\abs{\pi})F(\xi_{})^{r-\abs{\pi}-1}
            \grad_u F(\xi_{})\abs{x-y}
            }^{s-m_0}
            \cdot
            \abs{F(x)^{r-\abs{\pi}} + F(y)^{r-\abs{\pi}}}^{1-(s-m_0)}
            \\
            &\leq C(n,r,s)\abs{F(y)}^{-\abs{\pi}-1 + r + m_0/s}\abs{x-y}^{s-m_0}.
        \end{split}
        \label{eq:lem:key-flat-root}
    \end{equation}
    In light of \eqref{eq:lem:key-flat-prod} and \eqref{eq:lem:key-flat-root}, we see that
    \begin{equation}
        M_1 \leq C(n,r,s)F(y)^{r-1}\abs{x-y}^{s-m_0}.
        \label{eq:M1-final}
    \end{equation}

    Using a similar argument as in \eqref{eq:lem:key-flat-root}, we see that for $\abs{\beta}\leq m_0-1$
    \begin{equation}
        \begin{split}
            \abs{\db F(x) - \db F(y)} 
            &\leq 
            \abs{\grad_u \db F(\xi)\abs{x-y}}^{s-m_0}\abs{\db F(x) - \db F(y)}^{1-(s-m_0)}\\
            &\leq C(n,r,s)\abs{F(\xi)}^{\frac{s-\abs{\beta}-1}{s}(s-m_0)}\abs{F(y)}^{\frac{s-\abs{\beta}}{s}(1-s+k)}\abs{x-y}^{s-m_0}\\
            &\leq C(n,r,s)\abs{F(y)}^{\frac{m_0-\abs{\beta}}{s}}\abs{x-y}^{s-m_0}. 
        \end{split}
        \label{eq:lem:key-flat-root-der}
    \end{equation}
    For $\abs{\beta} = m_0$, since $F \in \Cs$, we have
    \begin{equation}
        \abs{\db F(x) - \db F(y)} \leq \norm{F}_{\Csh}\abs{x-y}^{s-m_0} \leq \abs{x-y}^{s-m_0} = \abs{F(y)}^{\frac{m_0-\abs{\beta}}{s}}\abs{x-y}^{s-m_0}.
        \label{eq:lem:key-flat-root-der-top}
    \end{equation}

    Now we estimate $M_2$.

    Temporarily fix $\beta \in \pi$, $x,y\in \Omega$. We define two arrays $(z_{\gamma,\beta})_{\gamma\in\pi}$ and $(w_{\gamma,\beta})_{\gamma\in\pi}$ with the following properties:
    \begin{itemize}
        \item $z_{\beta,\beta} = x$ and $w_{\beta,\beta} = y$;
        \item Either $z_{\beta,\gamma} = w_{\beta,\gamma} = x$ for all $\gamma \neq \beta$ or $z_{\beta,\gamma} = w_{\beta,\gamma} = y$ for all $\gamma \neq \beta$. 
    \end{itemize}
    Using \eqref{eq:lem:key-flat-root-der} and \eqref{eq:lem:key-flat-root-der-top}, we have
    \begin{equation}
        \begin{split}
            &F(y)^{r-\abs{\pi}}\abs{
            \prod_{\gamma\in\pi}\d^\gamma F(z_{\beta,\gamma}) - \prod_{\gamma \in \pi}\d^{\gamma}F(w_{\beta,\gamma})
            }\\
            &= 
            F(y)^{r-\abs{\pi}}\prod_{\gamma \in \pi, \gamma \neq \beta} 
            \abs{\d^\gamma F(z_{\beta,\gamma})}
            \abs{
            \db F(x) - \db F(y)
            }   \\
            &\leq 
            C(n,r,s)
            F(y)^{r-\abs{\pi}}
            \brac{ \prod_{\gamma \in \pi, \gamma \neq \beta} 
            F(z_{\beta,\gamma})^{\frac{s-\abs{\gamma}}{s}}
            }
            {F(y)}^{\frac{(s-\abs{\beta})-(s-m_0)}{s}}
            \abs{x-y}^{s-m_0}\\
            &\leq 
            C(n,r,s)
            F(y)^{r-1}\abs{x-y}^{s-m_0}.
        \end{split}
        \label{eq:telescope}
    \end{equation}
    In the last step, we used the fact that $\sum_{\gamma \in \pi}\gamma = \abs{\alpha} = m_0$. 
    By repeatedly applying triangle inequality and using \eqref{eq:telescope}, we have
    \begin{equation}
        \begin{split}
            M_2 &\leq F(y)^{r-\abs{\pi}}\sum_{\beta \in \pi}\abs{
            \prod_{\gamma \in \pi}\d^\gamma F(z_{\beta,\gamma})
            - 
            \prod_{\gamma \in \pi}\d^\beta F(w_{\beta,\gamma})
            } \leq C(n,r,s) F(y)^{r-1}\abs{x-y}^{s-m_0}.
        \end{split}
        \label{eq:M2-final}
    \end{equation}
    In light of Fa\'a di Bruno's Formula \eqref{eq:FdB}, estimates \eqref{eq:M1+M2}, \eqref{eq:M1-final}, and \eqref{eq:M2-final}, we see that \eqref{eq:lem:key-flat-2} holds in the case $\abs{x-y} \leq c(n,s)F(x)^{1/s}$.

    \paragraph{Case II: suppose $\abs{x-y} > c_0F(x)^{1/s}$} with $c_0$ as in Lemma \ref{lem:locally constant} with $\epsilon = 1/2$. Equivalently, $F(x) \leq C(n,s)\abs{x-y}^s$.

    Since $F(y) \leq F(x) \leq C(n,s)\abs{x-y}^s$, \eqref{eq:case2-1} implies
    \begin{equation}
        \begin{split}
            \abs{\da F(x) - \da F(y)}
        &\leq 
        C(n,r,s) \brac{ F(x)^{r-m_0/s} + F(y)^{r-m_0/s} }\\
        &\leq C(n,r,s)\frac{1}{F(y)^{1-r}}F(x)^{(s-m_0)/s}\\
        &\leq C(n,r,s) \frac{1}{F(y)^{1-r}}\abs{x-y}^{s-m_0}.
        \end{split}
        \label{eq:case2-2}
    \end{equation}
    We see that \eqref{eq:lem:key-flat-2} follows from \eqref{eq:case2-2} in the case $\abs{x-y}> c(n,s)F(x)^{1/s}$.

    Thus, we have shown \eqref{eq:lem:key-flat-2} for arbitrary $x,y \in \Omega$. 

    Finally, we turn to \eqref{eq:lem:key-flat-3}. Suppose $F \geq \epsilon > 0$ on $\Omega$. It follows from \eqref{eq:lem:key-flat-2} and \eqref{eq:lem:key-flat-1} that 
    \begin{equation}
        \norm{F^r}_{\Cs} \leq C(n,r,s)\epsilon^{r-1}\norm{F}_{\Fs}
        \label{eq:last-1}
    \end{equation}
    On the other hand, we see from \eqref{eq:lem:key-flat-1} that
    \begin{equation}
        \begin{split}
            \abs{\grad^mF^r(x)}
            &\leq C(n,r,s)\norm{F}_{\Fs}^{m/s}F(x)^{r-m/s}\brac{F(x)/\epsilon}^{\frac{m(1-r)}{s}}\\
            &= C(n,r,s)\brac{\norm{F}_{\Fs}/\epsilon^{1-r} }^{m/s} \brac{ F(x)^r }^{\frac{s-m}{s}}.
        \end{split}
        \label{eq:last-2}
    \end{equation}
    In light of \eqref{eq:last-1} and \eqref{eq:last-2}, we see that \eqref{eq:lem:key-flat-3} holds. 
\end{proof}

\begin{proof}[Proof of Theorem \ref{thm:root}]

\newcommand{\Csr}{\mathcal{C}^{sr}}
\newcommand{\Csrh}{\dot{\mathcal{C}}^{sr}}
\newcommand{\Fshr}{\dot{\mathcal{F}}^{sr}}

By rescaling, we may assume that $\norm{F}_{\Fs} = 1$.

Thanks to \eqref{eq:lem:key-flat-1}, we see that
\begin{equation}
    \max_{0 \leq m < \floor{sr}}\sup_{x \in \Omega}\abs{\grad^m F^r(x)} \leq C(n,r,s)
    \text{ and }
    \norm{F^r}_{\Fshr} \leq C(n,r,s).
\end{equation}
It remains to show that 
\begin{equation}
    \norm{F^r}_{\Csrh(\Omega)} \leq C(n,r,s).
    \label{eq:thm:root-main}
\end{equation}

We write
\begin{equation*}
    sr = m_0 + \sigma 
    \text{ with } m_0 = \floor{sr} \in \mathbb{Z} \text{ and }\sigma \in (0,1].
\end{equation*}

Let $x,y \in \Omega$. Let $\ell_{x,y}$ be the line segment connecting $x$ and $y$. Set
\begin{equation*}
    \Delta_{r,m_0}(x,y) := \abs{\grad^{m_0}F^r(x) - \grad^{m_0}F^r(y)}.
\end{equation*}

Thanks to \eqref{eq:lem:key-flat-1}, we have
\begin{equation}
    \abs{\grad^{m_0}F^r(x)} \leq C(n,r,s)F(x)^{r - m_0/s}.
    \label{eq:thm:root-1}
\end{equation}

Suppose $\abs{x-y} \geq c_0\max\set{F(x), F(y)}^{1/s}$ with $c_0$ as in Lemma \ref{lem:locally constant} with $\epsilon = 1/2$. We use \eqref{eq:thm:root-1} to obtain
\begin{equation}
    \Delta_{r,m_0}(x,y) \leq C(n,r,s)\max\set{F(x), F(y)}^{r-m_0/s} \leq C(n,r,s)\abs{x-y}^{\sigma}.
\end{equation}

Suppose $\abs{x-y} < c_0\max\set{F(x), F(y)}^{1/s}$. We see from Lemma \ref{lem:locally constant} that 
\begin{equation}
    F(x)/2 \leq F(\xi) \leq 2F(x)
    \text{ for all } \xi \in B(x,\abs{x-y}).
    \label{eq:thm:root:proof-locally constant}
\end{equation}

Suppose $m_0 < \floor{s}$. 
Thanks to \eqref{eq:lem:key-flat-1}, \eqref{eq:thm:root:proof-locally constant}, and the mean value theorem, 
\begin{equation}
    \begin{split}
       \Delta_{r,m_0}(x,y)
        &\leq \sup_{\xi \in \ell_{x,y}}\abs{\grad^{m_0+1}F^r(\xi)}
        \abs{y-x}
        \leq C(n,r,s)F(x)^{r-(m_0+1)/s}\abs{y-x}.
    \end{split}
    \label{eq:thm:root-2}
\end{equation}
By writing $\Delta_{r,m_0}(x,y) = \Delta_{r,m_0}(x,y)^{1-\sigma}\Delta_{r,m_0}(x,y)^\sigma$, we can use \eqref{eq:thm:root-1} and \eqref{eq:thm:root-2} to bound the first and second factor, respectively, to obtain $\Delta_{r,m_0}(x,y) \leq C(n,r,s)\abs{x-y}^{\sigma}$.

On the other hand, suppose $m_0 = \floor{s}$. Let
\begin{equation*}
    \theta = \frac{\sigma}{s - m_0}.
\end{equation*}
By writing $\Delta_{r,m_0}(x,y) = \Delta_{r,m_0}(x,y)^{1-\theta}\Delta_{r,m_0}(x,y)^\theta$, we can use \eqref{eq:thm:root-1} to bound the first factor and \eqref{eq:lem:key-flat-2} to bound the second to obtain $\Delta_{r,m_0}(x,y) \leq C(n,r,s)\abs{x-y}^{\sigma}$.

Therefore, we have shown \eqref{eq:thm:root-main}. This concludes the proof of Theorem \ref{thm:root}.
    
\end{proof}

 \bibliographystyle{plain}
 \bibliography{Whitney}

\begin{thebibliography}{10}

\bibitem{BM07}
Edward Bierstone and Pierre~D. Milman.
\newblock $\mathscr{C}^m$-norms on finite sets and $\mathscr{C}^m$-extension
  criteria.
\newblock {\em Duke Math. J.}, 137(1):1--18, 2007.

\bibitem{BMP06}
Edward Bierstone, Pierre~D. Milman, and Wiesław Pawłucki.
\newblock Higher-order tangents and fefferman's paper on {W}hitney's extension
  problem.
\newblock {\em Ann. of Math. (2)}, 164(1):361--370, 2006.

\bibitem{root-Bony06}
Jean-Michel Bony, Ferruccio Colombini, and Ludovico Pernazza.
\newblock Nonnegative functions as squares or sums of squares.
\newblock {\em J. Funct. Anal.}, 232(1):137--147, 2006.

\bibitem{root-Bony10}
Jean-Michel Bony, Ferruccio Colombini, and Ludovico Pernazza.
\newblock On square roots of class ${C}^m$ of nonnegative functions of one
  variable.
\newblock {\em Ann. Sc. norm. super. Pisa - Cl. sci. Serie V}, 3, 09 2010.

\bibitem{BruAYbook}
Alexander Brudnyi and Yuri Brudnyi.
\newblock {\em Methods of Geometric Analysis in Extension and Trace Problems},
  volume 102 of {\em Monographs in Mathematics}.
\newblock Birkh\"auser, 2010.

\bibitem{BS94-W}
Yuri Brudnyi and Pavel Shvartsman.
\newblock Generalizations of {W}hitney's extension theorem.
\newblock {\em Internat. Math. Res. Notices}, 3(129), 1994.

\bibitem{BS94-Tr}
Yuri Brudnyi and Pavel Shvartsman.
\newblock The traces of differentiable functions to subsets of
  {$\mathbb{R}^n$}.
\newblock In {\em Linear and complex analysis. Problem book 3}, volume 1574 of
  {\em Lecture Notes in Mathematics}, pages 279--281. Springer-Verlag, Berlin,
  1994.

\bibitem{BS98}
Yuri Brudnyi and Pavel Shvartsman.
\newblock The trace of jet space {$J^k\Lambda^\omega$} to an arbitrary closed
  subset of {$\mathbb{R}^n$}.
\newblock {\em Trans. Amer. Math. Soc.}, 350(4):1519--1553, 1998.

\bibitem{BS01}
Yuri Brudnyi and Pavel Shvartsman.
\newblock {W}hitney's extension problem for multivariate
  {${C}^{1,\omega}$}-functions.
\newblock {\em Trans. Amer. Math. Soc.}, 353(6):2487--2512 (electronic), 2001.

\bibitem{F05-J}
Charles Fefferman.
\newblock A generalized sharp {W}hitney theorem for jets.
\newblock {\em Rev. Mat. Iberoam.}, 21(2):577--688, 2005.

\bibitem{F05-L}
Charles Fefferman.
\newblock Interpolation and extrapolation of smooth functions by linear
  operators.
\newblock {\em Rev. Mat. Iberoam.}, 21(1):313--348, 2005.

\bibitem{F05-Sh}
Charles Fefferman.
\newblock A sharp form of {W}hitney's extension theorem.
\newblock {\em Ann. of Math. (2)}, 161(1):509--577, 2005.

\bibitem{F06}
Charles Fefferman.
\newblock {W}hitney's extension problem for {$ {C^m} $}.
\newblock {\em Ann. of Math. (2)}, 164(1):313--359, 2006.

\bibitem{F07-L}
Charles Fefferman.
\newblock {${C}^m$} extension by linear operators.
\newblock {\em Ann. of Math. (2)}, 166(2):779--835, 2007.

\bibitem{F09-Data-3}
Charles Fefferman.
\newblock Fitting a {$C^m$}-smooth function to data {III}.
\newblock {\em Ann. of Math. (2)}, 170(1):427--441, 2009.

\bibitem{F09-Int}
Charles Fefferman.
\newblock {W}hitney's extension problems and interpolation of data.
\newblock {\em Bull. Amer. Math. Soc. (N.S.)}, 46(2):207--220, 2009.

\bibitem{FI20-book}
Charles Fefferman and Arie Israel.
\newblock {\em Fitting Smooth Functions to Data}.
\newblock {CBMS} Regional Conference Series in Mathematics. American
  Mathematical Society, 2020.

\bibitem{FIL16}
Charles Fefferman, Arie Israel, and Garving~K. Luli.
\newblock Finiteness principles for smooth selections.
\newblock {\em Geom. Funct. Anal.}, 26(2):422--477, 2016.

\bibitem{FIL16+}
Charles Fefferman, Arie Israel, and Garving~K. Luli.
\newblock Interpolation of data by smooth non-negative functions.
\newblock {\em Rev. Mat. Iberoam.}, 33(1):305—324, 2016.

\bibitem{FJL23}
Charles Fefferman, Fushuai Jiang, and Garving~K. Luli.
\newblock ${C}^2$ interpolation with range restriction.
\newblock {\em Rev. Mat. Iberoam.}, 39(2):649--710, 2023.

\bibitem{FIL13}
Charles Fefferman, Garving~K. Luli, and Arie Israel.
\newblock {S}obolev extension by linear operators.
\newblock {\em Journal A.M.S.}, 27(1):69--145, 2013.

\bibitem{G58}
Georges Glaeser.
\newblock Étude de quelques algèbres tayloriennes.
\newblock {\em J. Analyse Math.}, 6:1--124, 1958.

\bibitem{G63}
Georges Glaeser.
\newblock Racine carr\'ee d'une fonction diff\'erentiable.
\newblock {\em Annales de l'Institut Fourier}, 13(2):203--210, 1963.

\bibitem{Bruno-Hardy}
Michael Hardy.
\newblock Combinatorics of partial derivatives.
\newblock {\em Electron. J. Comb}, 13, 2006.

\bibitem{I13}
Arie Israel.
\newblock A bounded linear extension operator for $ {L}^{2,p}(\mathbb{R}^2) $.
\newblock {\em Ann. of Math.}, 178(1):183--230, 2013.

\bibitem{JLLL23}
Fushuai Jiang, Chen Liang, Yutong Liang, and Garving~K. Luli.
\newblock Univariate range-restricted ${C}^2$ interpolation algorithms.
\newblock {\em J. Comput. and Appl. Math.}, 425:115040, 2023.

\bibitem{JL20-Ext}
Fushuai Jiang and Garving~K. Luli.
\newblock ${C^2(\mathbb{R}^2)}$ nonnegative interpolation by bounded-depth
  operators.
\newblock {\em Adv. Math.}, 375:107391, 2020.

\bibitem{JL20}
Fushuai Jiang and Garving~K. Luli.
\newblock Nonnegative $ {C}^2(\mathbb{R}^2)$ interpolation.
\newblock {\em Adv. Math.}, 375:107364, 2020.

\bibitem{JL20-Alg}
Fushuai Jiang and Garving~K. Luli.
\newblock Algorithms for nonnegative ${C^2(\mathbb{R}^2)}$ interpolation.
\newblock {\em Adv. Math.}, 385:107756, 2021.

\bibitem{JLO20}
Fushuai Jiang, Garving~K. Luli, and Kevin O'Neill.
\newblock On the shape fields finiteness principle.
\newblock {\em Int. Mat. Res. Not.}, 2022(23):18895--18918, 2021.

\bibitem{JLO22}
Fushuai Jiang, Garving~K. Luli, and Kevin O'Neill.
\newblock Smooth selection for infinite sets.
\newblock {\em Adv. Math.}, 407:108566, 2022.

\bibitem{St79}
Elias M.~Stein.
\newblock {\em Singular Integrals and Differentiability Properties of
  Functions}, volume~2 of {\em Monographs in harmonic analysis}.
\newblock Princeton University Press, Princeton, NJ, 1970.

\bibitem{noise}
Michael Nussbaum.
\newblock {Asymptotic equivalence of density estimation and Gaussian white
  noise}.
\newblock {\em Ann. Stat.}, 24(6):2399 -- 2430, 1996.

\bibitem{RSH16}
Kolyan Ray and Johannes Schmidt-Hieber.
\newblock Minimax theory for a class of nonlinear statistical inverse problems.
\newblock {\em Inverse Probl.}, 32(6):065003, 2016.

\bibitem{RSH17}
Kolyan Ray and Johannes Schmidt-Hieber.
\newblock A regularity class for the roots of nonnegative functions.
\newblock {\em Annali di Matematica}, 196:2091--2103, 2017.

\bibitem{Shv08}
Pavel Shvartsman.
\newblock The {W}hitney extension problem and {L}ipschitz selections of
  set-valued mappings in jet-spaces.
\newblock {\em Trans. Amer. Math. Soc.}, 360(10):5529--5550, 2008.

\bibitem{W34-1}
Hassler {W}hitney.
\newblock Analytic extensions of differentiable functions defined in closed
  sets.
\newblock {\em Trans. Amer. Math. Soc.}, 36(1):63--89, 1934.

\bibitem{W34-2}
Hassler {W}hitney.
\newblock Differentiable functions defined in closed sets. {I}.
\newblock {\em Trans. Amer. Math. Soc.}, 36(2):369--387, 1934.

\bibitem{W34-3}
Hassler {W}hitney.
\newblock Functions differentiable on the boundaries of regions.
\newblock {\em Ann. of Math. (2)}, 35(3):482--485, 1934.

\bibitem{Z98}
Nahum Zobin.
\newblock {W}hitney's problem on extendability of functions and an intrinsic
  metric.
\newblock {\em Adv. Math.}, 133(1):96--132, 1998.

\bibitem{Z99}
Nahum Zobin.
\newblock Extension of smooth functions from finitely connected planar domains.
\newblock {\em J. Geom. Anal.}, 9(3):489--509, 1999.

\end{thebibliography}

%\printbibliography

\end{document}